\newcommand{\go}{G^{(0)}}
\newcommand{\cc}{C_{c}}
\newcommand{\lemref}[1]{Lemma~\ref{#1}}
\newcommand{\cs}{\ensuremath{C^{*}}}
\def\supp{\operatorname{supp}}
\def\tr{\operatorname{tr}}
\def\range{\operatorname{range}}
\def\Rep{\operatorname{Rep}}
\def\H{\mathcal{H}} 
\def\O{\mathcal{O}}
\def\G{\mathcal{G}}
\def\cA{\mathcal{A}}
\def\cD{\mathcal{D}}
\def\cR{\mathcal{R}} 
\def\C{\mathbb{C}}
\def\T{\mathbb{T}}
\def\Z{\mathbb{Z}}
\def\N{\mathbb{N}}
\def\P{\mathbb{P}}
\newtheorem{thm}{Theorem}[section]
\newtheorem{prop}[thm]{Proposition}
\newtheorem{lemma}[thm]{Lemma}
\theoremstyle{definition}
\newtheorem{remark}[thm]{Remark}
\newtheorem{example}[thm]{Example}
\numberwithin{equation}{section}
\newcommand\set[1]{\{\,#1\,\}}
\begin{document}

\author[Clark]{Lisa Orloff Clark}
\address{Dept of Mathematical Sciences
\\Susquehanna University
\\Selinsgrove,  PA 17870
\\USA
} \email{clarklisa@susqu.edu}

\author[an Huef]{Astrid an Huef}
\address{Department of Mathematics and Statistics\\
University of Otago\\PO Box 56\\ Dunedin 9054\\
New Zealand}
\email{astrid@maths.otago.ac.nz}

\title{\boldmath{The representation theory of $C^*$-algebras associated to
groupoids}}

\date{12 July 2010}
\thanks{This research was supported by the Australian Research
Council and the Association for Women in Mathematics.}
\keywords{Locally compact groupoid, twisted groupoid, $C^*$-algebra,
bounded trace, Fell algebra, graph groupoid. }
\subjclass[2000]{46L05, 46L25, 46L55} \maketitle

\begin{abstract}Let $E$ be a second-countable, locally compact,
Hausdorff groupoid equipped with an action of $\T$ such that
$G:=E/\T$  is a principal groupoid with Haar system $\lambda$.  The
twisted groupoid $\cs$-algebra $\cs(E;G,\lambda)$ is a quotient of
the $C^*$-algebra of $E$ obtained by completing the space of
$\T$-equivariant functions on $E$.
 We show that $\cs(E;G,\lambda)$ is postliminal if and only if
the orbit space of $G$ is $T_0$ and that $\cs(E;G, \lambda)$ is
liminal if and only if the orbit space is $T_1$. We also show that
$\cs(E;G, \lambda)$ has bounded trace if and only if $G$ is
integrable and that $\cs(E;G, \lambda)$ is a Fell algebra if and
only if $G$ is Cartan.

Let $\G$ be a second-countable, locally compact, Hausdorff groupoid
with Haar system $\lambda$ and continuously varying, abelian
isotropy groups. Let $\cA$ be the isotropy groupoid and $\cR :=
\G/\cA$. Using the results about twisted groupoid $C^*$-algebras, we
show that the $\cs$-algebra $\cs(\G, \lambda)$ has bounded trace if
and only if $\cR$ is integrable and that $\cs(\G, \lambda)$ is a
Fell algebra if and only if $\cR$ is Cartan.  We illustrate our
theorems with examples of groupoids associated to directed graphs.
\end{abstract}

\section{Introduction}

Let $H$ be a locally compact, Hausdorff group acting continuously on
a locally compact, Hausdorff space X.   When the orbit space $X/H$
is reasonable, for example  if $X/H$ is $T_0$, then every
irreducible representation of the transformation-group $C^*$-algebra
$C_0(X)\rtimes H$ is induced from an irreducible representation of
an isotropy subgroup $S_x=\{h\in H: h\cdot x=x\}$.  In particular,
if the action of $H$ on $X$ is free then the spectrum of
$C_0(X)\rtimes H$ is homeomorphic to the orbit space by
\cite{green77}, or if $H$ is abelian then the spectrum of
$C_0(X)\rtimes H$ is homeomorphic to a quotient of $(X/H)\times\hat
H$ by \cite{dana-ccr}.  Many of the postliminal (Type \textrm{I})
properties of the transformation-group $C^*$-algebra can be deduced
from the dynamics of the transformation group $(H, X)$.  For
example, $C_0(X)\rtimes H$ is postliminal if and only if the orbit
space is $T_0$ and all the isotropy subgroups are postliminal
\cite{gootman}.  There are many more results of this nature in the
literature:   \cite{green77, wiljfa81,   echter} investigate when
$C_0(X)\rtimes H$ has continuous trace, \cite{aH1, aH2, AaH} when
$C_0(X)\rtimes H$ is a Fell algebra or has bounded trace, and
\cite{dana-ccr} when $C_0(X)\rtimes H$  is liminal.  Usually the
results are first proved for free actions and then generalized to
non-free actions; but even when the isotropy groups are abelian the
level of technical difficulty is much greater, and to get general
results assumptions on the isotropy subgroups (for example,
amenability or that they  vary continuously) often seem unavoidable.
The theorems above have served as a template for establishing
similar theorems for  the $C^*$-algebras of directed graphs
\cite{ephrem, Robbie} and the $C^*$-algebras of groupoids
\cite{MW90, MW92,MRW96,C,C2,CaH}.

Let $\G$ be a locally compact, Hausdorff groupoid with abelian
isotropy subgroups,  and let $\cA$ be the isotropy groupoid. The
main theorem of \cite{MRW96} says that $C^*(\G)$ has continuous
trace if and only if the isotropy groups vary continuously and
$\G/\cA$ is a proper groupoid.  The proof strategy, quickly, is to
show that all the irreducible representations of $C^*(\G)$ are
induced, and to use the dual isotropy groupoid $\hat\cA$ to
construct a $\T$-groupoid whose associated twisted groupoid
$C^*$-algebra is isomorphic to $C^*(\G)$.  Then the characterization
of when twisted groupoid $C^*$-algebras have continuous trace from
\cite{MW92} completes the proof.

In this paper we generalize first the results from \cite{MW92}  to characterize when
 a twisted groupoid $C^*$-algebra has bounded trace or is a Fell algebra (Theorems~\ref{thm-hard}
  and~\ref{thm-Fell}), and
   second, the  results  from \cite{MRW96} to characterize when a groupoid with continuously varying,
   abelian isotropy groups has bounded trace  or is a Fell algebra (Theorems~\ref{thm-end} and~\ref{thm-Cartan2}).
To  do this we  had to deal with non-Hausdorff spectra, which led to
a  sharpening of \cite[Proposition~3.3]{MW92} and
\cite[Proposition~4.5]{MRW96} (see Theorem~\ref{MWthm-better} and
Proposition~\ref{isomo}).  Theorem~\ref{MWthm-better} says that when
the orbit space of the groupoid is $T_0$, then the spectrum of the
twisted groupoid $C^*$-algebra is homeomorphic to the orbit space
and Proposition~\ref{isomo} establishes the isomorphism of $C^*(\G)$
with the twisted groupoid $C^*$-algebra of \cite{MRW96} mentioned
above under weaker hypothesis. Finally we illustrate our theorems
with examples of groupoids associated to directed graphs.

\section{Preliminaries}
Let $G$ be a  locally compact, Hausdorff groupoid. We denote
the unit space of $G$  by $G^{(0)}$, the range and source maps
$r,s:G\to G^{(0)}$ are $r(\gamma)=\gamma\gamma^{-1}$ and
$s(\gamma)=\gamma^{-1}\gamma$, respectively, and the set of
composable pairs by $G^{(2)}$.  Recall that $G$ is \emph{principal}
if the map $\gamma\mapsto (r(\gamma),s(\gamma))$ is injective.

Let $N\subseteq G^{(0)}$.  The \emph{saturation} of $N$ is
$r(s^{-1}(N))=s(r^{-1}(N))$, and if $N=r(s^{-1}(N))$ then we say
that $N$ is \emph{saturated}. We define the \emph{restriction of $G$
to $N$} to be $G|_N:=\{\gamma\in G:s(\gamma)\in N\text{\ and\
}r(\gamma)\in N\}$.  The latter is not to be confused with $G_N
:=\{\gamma \in G : s(\gamma) \in N\}$. If $u\in G^{(0)}$, we call
the saturation of $\{u\}$ the \emph{orbit} of $u\in G^{(0)}$ and
denote it by $[u]$; we also  write  $G_u$ instead of $G_{\{u\}}$.

\subsection{$\T$-groupoids} A \emph{$\T$-groupoid} $E$ is a topological groupoid $E$ with a continuous free action of the circle
group $\T$ on $E$ such that
\begin{enumerate}
\item if $(\gamma_1, \gamma_2) \in E^{(2)}$ and $s,t \in
\T$ then
\[
(s\gamma_1, t\gamma_2) \in E^{(2)}\quad \text{and}\quad (s \gamma_1)(t \gamma_2) = (ts) (\gamma_1 \gamma_2);
\]
\item $G:=E/\T$ is a principal groupoid.
\end{enumerate}
In what follows, we will always assume that $E$ is
second-countable, locally compact and Hausdorff. Note that the
composibility condition (1) implies that $s(\gamma)=s(t\cdot\gamma)$
and $r(\gamma)=r(t\cdot\gamma)$ for all $\gamma\in E$ and $t\in \T$;
in particular, $E^{(0)}=G^{(0)}$. That $G$ is principal implies that
there is an exact sequence
\begin{equation}\label{ses}
E^{(0)} \longrightarrow E^{(0)} \times \T \overset{i}{\longrightarrow} E
\overset{j} {\longrightarrow} G \longrightarrow E^{(0)}
\end{equation}
where  $i$ is the homeomorphism  $i(u,t) = t\cdot u$   onto a closed
subgroupoid and $j$ is the quotient map.  Conversely, starting with
a sequence~\eqref{ses}, there  is a free action of $\T$ on $E$
defined by $t\cdot\gamma=i(r(\gamma), t)\gamma$, and the quotient
$E/\T$ can be identified with $G$.

\begin{remark} Since $\T$ is compact, $G=E/\T$ is Hausdorff, and since $\T$ is a compact Lie group,
$E$ is a locally trivial bundle over $G$ by \cite[Proposition~4.65 and Hooptedoodle~4.68]{tfb}.
That the sequence \eqref{ses} is exact is equivalent to: every $\gamma$ in the isotropy groupoid
\[\cA:=\{\gamma\in E:s(\gamma)=r(\gamma)\}\] can be written as $t\cdot u$ for some $t\in\T$ and $u\in E^{(0)}$.
Thus our $\T$-groupoid is what is called a ``proper $\T$-groupoid''
in \cite[Definition~2.2]{K}. But since  we do not assume that
$G=E/\T$ is \'etale, $E$ is not a ``twist'' in the sense of
\cite[Definition~2.4]{K}; $\T$-groupoids are more
general. In particular, our assumption
that $E$ is $\T$-groupoid such that $G=E/\T$ is a principal groupoid
puts us in the situation of \cite{MW92}.
\end{remark}


\subsection*{Construction of the twisted groupoid $C^*$-algebra}
We briefly outline the construction of the twisted groupoid
$C^*$-algebra from \cite{MW92}. Let $E$  be a $\T$-groupoid over a
principal groupoid $G$ equipped with a left Haar system
$\{\lambda^u:u\in G^{(0)}\}$.  Then there is a left Haar system
$\{\sigma^u:u\in E^{(0)}=G^{(0)}\}$ on $E$ characterized  by
\begin{equation}\label{eq-Haar}
\int_E f(\alpha)\, d\sigma^u(\alpha)=\int_G\int_\T f(t\cdot\alpha)\, dt\, d\lambda^u(j(\alpha))\quad\text{($f\in C_c(E)$).}
\end{equation}
  A left Haar system $\{\lambda^u:u\in G^{(0)}\}$ gives  a right Haar system $\{\lambda_u:u\in G^{(0)}\}$ via $\lambda_u(E):=\lambda^u(E^{-1})$,
  and we will move freely between the left and right systems when convenient.

The usual  groupoid $C^*$-algebra $C^*(E,\sigma)$ of $E$  is the
$C^*$-algebra which is universal  for continuous nondegenerate $*$-representations $L:C_c(E)\to B(\H_L)$, where
$C_c(E)$ has the inductive limit topology, $B(\H_L)$ the weak operator topology, and $C_c(E)$ is a $*$-algebra via
\begin{equation*}\label{eq-algebraicops}
f*g(\gamma)=\int_E f(\gamma\alpha)g(\alpha^{-1})\,
d\sigma^{s(\gamma)}(\alpha)\quad\text{and}\quad
f^*(\gamma)=\overline{f(\gamma^{-1})}
\end{equation*}
for $f,g\in C_c(E)$.

The \emph{twisted groupoid $C^*$-algebra}  $C^*(E;G,\lambda)$ is a
quotient of $C^*(E,\sigma)$ obtained as follows. Let $C_c(E;G)$ be
the collection of $f\in C_c(E)$ such that
$f(t\cdot\gamma)=tf(\gamma)$. Note that for $f,g\in C_c(E;G)$ and
$\gamma\in E$, the function $\alpha\mapsto
f(\gamma\alpha)g(\alpha^{-1})$  depends only on the class
$j(\alpha)$ of $\alpha$. So we can equip $C_c(E;G)$  with a
*-algebra structure via
\[
f*g(\gamma)=\int_G f(\gamma\alpha)g(\alpha^{-1})\,
d\lambda^{s(\gamma)}(j(\alpha))\quad\left(= \int_E f(\gamma\alpha)g(\alpha^{-1})\,
d\sigma^{s(\gamma)}(\alpha)\right)
\]
and $f^*(\gamma)=\overline{f(\gamma^{-1})}$ for $f,g\in C_c(E;G)$; using \eqref{eq-Haar} it is straightforward
to check that the formulae for $f*g$ in $C_c(E)$ and $C_c(E;G)$ coincide.
 Let $\Rep(E;G)$ be the collection of non-degenerate $*$-representations  $L:C_c(E;G)\to B(\H_L)$ which are
 continuous when $C_c(E;G)$ has the inductive limit topology and $B(\H_L)$ has the weak operator topology.  It follows from
\cite[Proposition~3.5 and Th\`eor\'eme~4.1]{renault-jot} that for
$f\in C_c(E;G)$
\[
\|f\|=\sup\{\|L(f)\|:L\in\Rep(E;G)\}
\]
is finite and defines a pre-$C^*$-norm on $C_c(E;G)$.   The
completion of $C_c(E;G)$ in this norm is the twisted groupoid
$C^*$-algebra $C^*(E;G,\lambda)$. That $C^*(E;G,\lambda)$ is a
quotient of $C^*(E,\sigma)$ follows because $\Rep(E;G)$ is a subset
of the representations considered when constructing $C^*(E,\sigma)$.
By Lemma~3.3 of \cite{renault-jot}, the surjective homomorphism
$\Upsilon:C_c(E)\to C_c(E;G)$ defined by
\begin{equation}\label{defnchi}
\Upsilon(f)(\gamma)=\int_\T f(t\cdot\gamma)\bar t\, dt
\end{equation}
is continuous in the inductive limit topology, and hence extends to
a homomorphism $\Upsilon:C^*(E,\sigma)\to C^*(E;G,\lambda)$ called
the quotient map. The reasons for calling $C^*(E;G,\lambda)$ the
``twisted groupoid $C^*$-algebra'' are outlined in \cite[\S2]{MW92}.

\subsection{Postliminal properties of $\cs$-algebras}  Let $A$ be a $C^*$-algebra and $\hat A$ its spectrum.
If $\pi$ is an irreducible representation of $A$ then we write
$\H_\pi$ for the Hilbert space on which $\pi(A)$ acts. If
$\pi(A)\supseteq K(\H_\pi)$ for every irreducible representation
$\pi$ of $A$, then $A$ is \emph{postliminal}; if $\pi(A)=
K(\H_\pi)$ for every irreducible representation $\pi$ of $A$, then
$A$ is \emph{liminal}. In the literature postliminal and liminal
$C^*$-algebras are also called GCR and CCR $C^*$-algebras,
respectively.  A positive element $b\in A$ is called a
\emph{bounded-trace element} if the map $[\pi] \mapsto \tr(\pi(b))$
is bounded on  $\hat A$. Then $A$ has \emph{bounded trace} if the
ideal consisting of the linear span of bounded-trace elements is
dense in A. An  irreducible representation $\pi$ of $A$ satisfies
\emph{Fell's condition} if there is a positive $a \in A$ and a
neighbourhood $U$ of $[\pi]$ in $\hat A$ such that $\sigma(a)$ is a
rank-one projection whenever $[\sigma] \in U$. If every irreducible
representation of $A$ satisfies Fell's condition then $A$ is a
\emph{Fell algebra}. A Fell algebra  $A$  has Hausdorff spectrum if
and only if  $A$ has continuous trace.  Each of the properties above
are listed in order of reverse containment.


\section{The spectrum of a twisted groupoid $\cs$-algebra}

We start by investigating ideals in $C^*(E;G,\lambda)$ associated to
open saturated subsets of the unit space of $G$.

\begin{lemma}\label{lem-exact}
Suppose that $E$ is a second-countable, locally compact, Hausdorff,
$\T$-groupoid such that $G:=E/\T$  is a principal groupoid with Haar
system $\lambda$.  Let $\sigma$ be the Haar system on $E$ defined at
\eqref{eq-Haar} and $U$ an open saturated subset of $G^{(0)}$
with $F:=G^{(0)}\setminus U$.  Then the short exact sequence
\begin{equation}\label{eq-exactMRW}
0\to C^*(E|_U,\sigma)\stackrel{i}\to C^*(E,\sigma)\stackrel{p}\to C^*(E|_F,\sigma)\to 0
\end{equation}
of \cite[Lemma~2.10]{MRW96} induces a short exact sequence
\begin{equation}\label{eq-exact}
0\to C^*(E|_U; G|_U,\lambda)\stackrel{k}\to C^*(E;G,\lambda)\stackrel{r}\to C^*(E|_F; G|_F,\lambda)\to 0
\end{equation}
such that $k$ is isometric and $\Upsilon\circ i=k\circ\Upsilon$ and
$\Upsilon\circ p=r\circ \Upsilon$.  On  continuous functions the
maps $k$ and $p$ are extension by $0$ and restriction, respectively.
\end{lemma}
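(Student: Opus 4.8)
The plan is to build the sequence \eqref{eq-exact} on the dense $*$-subalgebras of $\T$-equivariant functions, to check by direct computation that it is compatible with the quotient maps $\Upsilon$, and then to reduce its exactness and the isometry of $k$ to the corresponding facts for the known sequence \eqref{eq-exactMRW}. Since $U$ is open and saturated, $E|_U$ is an open $\T$-invariant subgroupoid of $E$ and $E|_F$ is a closed $\T$-invariant subgroupoid, so extension by $0$ carries $C_c(E|_U;G|_U)$ into $C_c(E;G)$: the identity $f(t\cdot\gamma)=tf(\gamma)$ survives because $E|_U$ is $\T$-invariant. Because $U$ is \emph{saturated} (so that $s(\gamma)\in U$ forces $r(\gamma)\in U$) and $\lambda$ restricts to the Haar system of $G|_U$, the resulting $*$-homomorphism $k_0$ has image a two-sided $*$-ideal $I_0$ of $C_c(E;G)$. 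Likewise, restriction defines a $*$-homomorphism $r_0\colon C_c(E;G)\to C_c(E|_F;G|_F)$; it is onto, because any $h\in C_c(E|_F;G|_F)$ lifts to some $\tilde h\in C_c(E)$ as in \cite[Lemma~2.10]{MRW96}, and then $\Upsilon(\tilde h)\in C_c(E;G)$ restricts on $E|_F$ to $h$ since $h$ is already $\T$-equivariant. A computation with \eqref{defnchi}, using the $\T$-invariance of $E|_U$ and $E|_F$, gives $\Upsilon\circ i=k_0\circ\Upsilon$ and $\Upsilon\circ p=r_0\circ\Upsilon$ on $C_c(E)$; by construction $k_0$ and $r_0$ are extension by $0$ and restriction on continuous functions.

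I would then pass to the completions. Both $k_0$ and $r_0$ are continuous for the inductive-limit topologies, so composing them with representations in $\Rep(E;G)$, respectively $\Rep(E|_F;G|_F)$ --- and, for $k_0$, first cutting down to the essential subspace --- shows each is norm-decreasing for the relevant $C^*$-norms. Hence $k_0$ extends to a contraction $k$ and $r_0$ to a contraction $r$; as $r_0$ is onto, $r$ is surjective, and $k$ has range the closed ideal $I:=\overline{I_0}$ of $C^*(E;G,\lambda)$. By density the relations $\Upsilon\circ i=k\circ\Upsilon$ and $\Upsilon\circ p=r\circ\Upsilon$ pass to the completions, and from them, together with surjectivity of the $\Upsilon$'s, one reads off $r\circ k=0$ and $I=k(C^*(E|_U;G|_U,\lambda))=\Upsilon\bigl(i(C^*(E|_U,\sigma))\bigr)\subseteq\ker r$.

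The remaining assertions --- that $k$ is isometric and that $\ker r\subseteq I$, so that \eqref{eq-exact} is exact in the middle --- are where the real work lies, since a diagram chase alone is circular here: it needs to know that $p$ carries the kernel of $\Upsilon$ for $E$ onto the kernel of $\Upsilon$ for $E|_F$, and that $i$ pulls the former back to the kernel of $\Upsilon$ for $E|_U$. One clean route is via the $\T$-isotypic decomposition. Calling $f$ \emph{of weight $n$} when $f(t\cdot\gamma)=t^nf(\gamma)$, one checks that functions of distinct weights convolve to $0$ (a substitution $\alpha\mapsto t\cdot\alpha$ in the convolution integral multiplies the integrand by a nontrivial character of $\T$, and each Haar system is $\T$-invariant), so $C^*(E,\sigma)=\bigoplus_{n\in\Z}B_n$ is a $c_0$-direct sum of $C^*$-algebras with $B_1=C^*(E;G,\lambda)$ and $\Upsilon$ the projection onto the summand $B_1$; the same holds for $E|_U$ and $E|_F$. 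Since $i$ and $p$ are extension by $0$ and restriction, they preserve weight, so \eqref{eq-exactMRW} is the direct sum over $n$ of short exact sequences whose first maps are isometric; restricting to the $n=1$ summand yields precisely \eqref{eq-exact} with $k$ isometric and with the stated intertwining. Alternatively one can avoid the decomposition and adapt, for the ideal $C_c(E|_U;G|_U)$ of $C_c(E;G)$, the approximate-identity argument by which the untwisted sequence \eqref{eq-exactMRW} is proved in \cite[Lemma~2.10]{MRW96}, using the disintegration results of \cite{renault-jot} recalled above to supply the representations; either way the crux is the coincidence of the $C^*$-norm on $C_c(E|_U;G|_U)$ with the norm it inherits from $C^*(E;G,\lambda)$.
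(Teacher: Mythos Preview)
Your argument is correct, and the route you take for the hard steps---isometry of $k$ and exactness in the middle---is genuinely different from the paper's.  The paper does not invoke the $\T$-isotypic decomposition of $C^*(E,\sigma)$.  Instead, to prove that $k$ is isometric it fixes a representation $\pi$ of $C^*(E|_U;G|_U,\lambda)$, views $\pi\circ\Upsilon\circ i^{-1}$ as a representation of the ideal $i\bigl(C^*(E|_U,\sigma)\bigr)\subseteq C^*(E,\sigma)$, takes its canonical extension $\tilde\pi$ to all of $C^*(E,\sigma)$, checks by a short computation that $\tilde\pi(\Upsilon(g))=\tilde\pi(g)$ so that $\tilde\pi$ factors through $\Upsilon$ to give $\hat\pi$ on $C^*(E;G,\lambda)$, and then verifies $\hat\pi(k(f))=\pi(f)$ on $C_c(E|_U;G|_U)$.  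For exactness the paper argues directly that $h\in C_c(E)\cap\ker r$ must be supported in $U$ and hence lies in $\range k$.

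Your approach packages the same phenomenon more structurally: the orthogonality of the weight spaces (which is correct---under $\alpha\mapsto t\cdot\alpha$ the integrand of $f*g$ picks up $t^{m-n}$, and $\sigma^u$ is $\T$-invariant) makes the closures $B_n$ mutually annihilating ideals, and once one knows $\overline{\sum B_n}=C^*(E,\sigma)$ (Fej\'er-type approximation) and $B_1\cong C^*(E;G,\lambda)$, the lemma is immediate from \eqref{eq-exactMRW}.  This buys you a reusable structural fact about $\Upsilon$ and explains \emph{why} the twisted sequence inherits exactness, at the cost of having to justify the $c_0$-direct-sum decomposition and the identification $B_1=C^*(E;G,\lambda)$, which you only sketch.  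The paper's argument is more self-contained and avoids that overhead, but is less illuminating about the relationship between $C^*(E,\sigma)$ and its twisted quotient.
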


\begin{proof}  Note that we write  just $\Upsilon$  for both the homomorphisms $\Upsilon:C^*(E,\sigma)\to C^*(E;G,\lambda)$  and
 $\Upsilon:C^*(E|_U,\sigma)\to C^*(E|_U;G|_U,\lambda)$.   Since
$\Upsilon\circ i=\Upsilon\circ i\circ\Upsilon$ on $C_c(E|_U)$ we
have  $\ker\Upsilon\subseteq\ker(\Upsilon\circ i)$, and hence there
exists a unique homomorphism $k:C^*(E|_U; G|_U,\lambda)\to
C^*(E;G,\lambda)$ such that $\Upsilon\circ i=k\circ\Upsilon$.
Similarly, $\Upsilon\circ p=\Upsilon\circ p\circ\Upsilon$ on
$C_c(E)$, so $\ker \Upsilon\subseteq\ker(\Upsilon\circ p)$, and
hence there exists a unique homomorphism $r: C^*(E;G,\lambda)\to
C^*(E|_F; G|_F,\lambda)$ such that $r\circ\Upsilon=\Upsilon\circ p$.
Note that $r$ is surjective because $p$ and $\Upsilon$ are.

To see that $k$ is isometric, fix a  representation $\pi$ of
$C^*(E|_U; G|_U, \lambda)$. It suffices to see that $\pi$ determines
a representation $\hat \pi$ of $C_c(E;G)$ such that
$\|\pi(f)\|=\|\hat\pi(k(f))\|$ for $f\in C_c(E|_U; G|_U)$; this will
give $\|k(f)\|\geq \|f\|$ and hence $\|k(f) \|=\|f\|$.

By  \cite[Lemma~2.10]{MRW96}, $i$ is an isometric isomorphism of $C^*(E|_U)$ onto an ideal $I$ of $C^*(E)$. Let
$\tilde\pi :C^*(E)\to B(\H_\pi)$ be the canonical extension of   $\pi\circ\Upsilon\circ i^{-1}: I\to B(\H_\pi)$.
Note that, for $g\in C_c(E)$ and $h\in C_c(E|_U)\subseteq C_c(E)$ we have
\begin{align*}
\tilde\pi(\Upsilon(g))\pi\circ\Upsilon\circ i^{-1}(h)
&=\pi\circ\Upsilon\circ i^{-1}(\Upsilon(g)h)=\pi\circ k^{-1}\circ \Upsilon(\Upsilon(g)h)\\
&=\pi\circ k^{-1}\circ \Upsilon(gh)=\pi\circ\Upsilon\circ i^{-1}(gh)\\
&=\tilde\pi(g)\pi\circ\Upsilon\circ i^{-1}(h).
\end{align*}
Thus $\tilde\pi(\Upsilon(g))=\tilde\pi(g)$ and hence $\tilde\pi$ factors through $C^*(E;G, \lambda)$ and gives
a representation $\hat\pi: C^*(E;G, \lambda)\to B(\H_\pi)$ such that $\tilde\pi=\hat\pi\circ\Upsilon$.
Finally, if $f\in C_c(E|_U; G|_U)$ then for all $h\in C_c(E|_U)$ we have
\[
\hat\pi(k(f))\pi\circ\Upsilon\circ i^{-1}(h)=\pi\circ\Upsilon\circ i^{-1}(k(f)h)=\pi\circ\Upsilon(fh)=\pi(f)\pi\circ\Upsilon\circ i^{-1}(h),
\]
and hence $\hat\pi(k(f))=\pi(f)$. Thus $k$ is isometric.

Since $k$ is isometric, the image of $k$ is the completion of $C_c(E|_U; G|_U)$ viewed as functions on
$E$. Since $U$ and $F$ are disjoint  $r(C_c(E|_U ;G|_U))=0$ and hence $\range k\subseteq\ker r$.
Conversely, if $h\in C_c(E)\cap \ker r$ then  $h$ has support in $U$ and hence  is in the range of $i$. Thus $\range k= \ker r$.
\end{proof}

Fix $u \in \go$ and let $\H^0_u$ be the collection of bounded Borel
functions  $f$ on $E$ with compact support in $E_u=s^{-1}(\{u\})$
satisfying $f(t\cdot\gamma)=tf(\gamma)$ for all $t\in\T$ and
$\gamma\in E$. For each $\xi,\eta\in \H_u^{0}$ define
\[
( \xi\mid  \eta )_u = \int_G \xi(\gamma)\overline{\eta(\gamma)} \ d\lambda_u(j(\gamma))\quad\left(= \int_E \xi(\gamma)\overline{\eta(\gamma)} \ d\sigma_u(\gamma)\right)
\]
to get an inner product on $\H_u^0$.  Denote by $\H_u$ the
Hilbert space completion of $\H_u^0$ with respect to this inner product; note that $\H_u$ is a closed subspace of $L^2(E_u,\sigma_u)$.
Moreover, the functions obtained by
restricting elements of $\cc(E;G)$ to $E_u$ form a dense subset of
$\H_u$ (see  \cite[Page~133]{MW92}).

Let $f,\xi\in C_c(E;G)$.  By \cite[\S3]{MW92}, the formula
\begin{equation}\label{eq-Lu}
L^u(f)\xi(\gamma) = f* \xi(\gamma)=\int_G
f(\gamma\alpha)\xi(\alpha^{-1})\,
d\lambda^u(j(\alpha))\end{equation} defines an appropriately
continuous representation $L^u(E;G)=L^u$ of $C_c(E;G)$ on a dense
subspace of $\H_u$, whence $L^u$ extends to a representation $L^u$ of
$C^*(E;G,\lambda)$ on $\H_u$.  By \cite[Lemma~3.2]{MW92}, $L^u$ is
irreducible, and if $[u]=[v]$ then $L^u$ and $L^v$ are unitarily
equivalent.

In Proposition~3.3 of \cite{MW92}, Muhly and Williams prove that if
$C^*(E;G,\lambda)$ has Hausdorff spectrum, then $L:u\mapsto [L^u]$
induces a homeomorphism $\Psi$ from the orbit space $G^{(0)}/G$ onto
the spectrum of $C^*(E;G,\lambda)$; it seems from the application of
\cite[Proposition~3.3]{MW92} in the proof of
\cite[Proposition~4.5]{MRW96} that its authors knew that the proof
goes through using only that $C^*(E;G,\lambda)$ has $T_1$ spectrum
(see \cite[middle of p.~3638]{MRW96} and the  applications of
\cite[Proposition~4.5]{MRW96} in the proof of
\cite[Theorem~1.1]{MRW96}).

The original proof of \cite[Proposition~3.3]{MW92} refers the reader to the proof of \cite[Proposition~25]{MW90}
 to see that $\Psi$ induces a continuous injection; since the notations of \cite{MW92} and \cite{MW90} don't
 match up, we had to carefully go through the details to verify that the Hausdorff condition wasn't needed, and we
 record these details here. The proof that $\Psi$ is open onto its range given in \cite[Proposition~3.3]{MW92} used
 that $C^*(E;G,\lambda)$ has $T_1$ spectrum; our argument below does not require this hypothesis.
We strengthen Proposition~\ref{MWthm} further in Theorem~\ref{MWthm-better} below.

\begin{prop}[Muhly-Williams]
\label{MWthm} Let $E$ be a second-countable, locally compact,
Hausdorff, $\T$-groupoid such that $G:=E/\T$  is a principal
groupoid with Haar system $\lambda$.  For $u\in G^{(0)}$ let $L^u$
be the irreducible representation defined at \eqref{eq-Lu}.
\begin{enumerate}
\item Then the map $u\mapsto [L^u]$ induces a continuous
injection $\Psi: G^{(0)}/G\to C^*(E;G,\lambda)^\wedge$ which is open  onto its range.
\item If $G^{(0)}/G$ is $T_1$ then $\Psi$ is a homeomorphism of $G^{(0)}/G$ onto $C^*(E;G,\lambda)^\wedge$.
\end{enumerate}
\end{prop}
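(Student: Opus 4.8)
The statement has two parts, and the proof should proceed by first establishing the map is well-defined, continuous and injective, then that it is open onto its range, and finally (for part (2)) that under the $T_1$ hypothesis it is surjective. For well-definedness and the fact that $\Psi$ lands in the spectrum: each $L^u$ is irreducible by \cite[Lemma~3.2]{MW92}, and the same lemma gives that $[u]=[v]$ implies $L^u\cong L^v$, so $\Psi$ descends to the orbit space. For injectivity, I would argue that if $[u]\neq[v]$ then $L^u$ and $L^v$ are inequivalent: since $G$ is principal, the orbits $[u]$ and $[v]$ are disjoint closed-in-their-saturation sets, and one can separate them using the ideal structure from \lemref{lem-exact} — pick an open saturated $U$ containing one orbit but not (a point of) the other, and observe that $L^u$ factors through the quotient by $C^*(E|_U;G|_U,\lambda)$ while $L^v$ does not, or vice versa; more concretely, evaluate $L^u$ and $L^v$ on a function in $C_c(E;G)$ supported near one orbit. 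This is essentially the content of \cite[Proposition~25]{MW90}, transcribed into the present notation, and the paragraph preceding the proposition flags that this transcription is exactly what needs to be checked.

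**Continuity and openness.** For continuity of $\Psi$, I would use the standard criterion: $u_n\to u$ in $G^{(0)}/G$ implies $[L^{u_n}]\to[L^u]$, which follows if for each $f\in C_c(E;G)$ the function $u\mapsto (L^u(f)\xi_u\mid \eta_u)_u$ behaves continuously for suitable sections $\xi,\eta$ obtained by restricting fixed elements of $C_c(E;G)$ to the fibres $E_u$; the explicit integral formula \eqref{eq-Lu} together with continuity of the Haar system $\lambda$ makes this a routine (if slightly fiddly) estimate, and it does not use any separation hypothesis. For openness onto the range, the original \cite[Proposition~3.3]{MW92} argument invoked the $T_1$ hypothesis; the point here is to remove it. I would argue directly: given an open saturated $U\subseteq G^{(0)}$, its image $\Psi(U/G)$ is $\{[L^u] : u\in U\}$, and using \lemref{lem-exact} this is precisely the set of $[\pi]\in C^*(E;G,\lambda)^\wedge$ that do not factor through the quotient $r:C^*(E;G,\lambda)\to C^*(E|_F;G|_F,\lambda)$ where $F=G^{(0)}\setminus U$ — i.e. it is the open subset of the spectrum corresponding to the ideal $k(C^*(E|_U;G|_U,\lambda))$. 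Since the open saturated sets $U/G$ form a basis for the topology on $G^{(0)}/G$ (principality is what gives this), $\Psi$ is open onto its range.

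**Part (2): surjectivity under $T_1$.** Assuming $G^{(0)}/G$ is $T_1$, I must show every irreducible representation of $C^*(E;G,\lambda)$ is equivalent to some $L^u$. Since $\Psi$ is a continuous open injection onto its range, its range is an open subset $S$ of the spectrum; I want $S$ to be everything. The $T_1$ hypothesis means points of $G^{(0)}/G$ are closed, hence (transporting along $\Psi$, which is a homeomorphism onto $S$) each singleton $\{[L^u]\}$ is closed in $S$; combined with openness of $S$ and the fact that $G^{(0)}/G$ is itself $T_1$ one sees $S$ is also closed — but a cleaner route is: the complement of $S$ in the spectrum corresponds to a (possibly zero) quotient $C^*$-algebra; if this quotient were nonzero it would have an irreducible representation $\pi$, and by a Mackey–type / disintegration argument (restricting $\pi$ to the diagonal $C_0(G^{(0)})$-type subalgebra, which is available since $G$ is principal and the twisting is over the unit space) $\pi$ would be supported on some orbit, forcing $\pi\cong L^u$ for $u$ in that orbit, a contradiction. \textbf{The main obstacle} I anticipate is precisely this last step of ruling out ``extra'' irreducible representations without the Hausdorff crutch — showing that an arbitrary irreducible $\pi$ is concentrated on a single orbit. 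This is where one genuinely uses $T_1$: it guarantees that the quasi-orbit (the closure of the orbit in $G^{(0)}/G$) on which $\pi$ is supported is a single point, so that $\pi$ is induced from that orbit and hence is one of the $L^u$. I would lean on the ideal-theoretic structure of \lemref{lem-exact} applied to the open saturated sets $G^{(0)}\setminus\overline{[u]}$ to pin down the support, and on the uniqueness (up to equivalence) of the irreducible representation associated to a closed orbit — which, for a principal groupoid, reduces to the elementary fact that $C^*$ of a transitive principal groupoid with a given unit space is a single copy of the compacts.
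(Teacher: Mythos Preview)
Your continuity argument and your openness argument are essentially the paper's: both go through the ideal associated to an open saturated set via \lemref{lem-exact}. (Two small slips in your openness paragraph: you assert $\Psi(U/G)$ equals the full open set $\hat I$ in the spectrum, but you only get $\Psi(U/G)=\hat I\cap\operatorname{range}\Psi$, which is exactly what ``open onto its range'' needs; and the remark that principality is why open saturated sets give the topology on $G^{(0)}/G$ is misplaced --- that is just the definition of the quotient topology.)

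The genuine gap is in your injectivity argument. You propose to separate distinct orbits $[u]\neq[v]$ by an open saturated set $U$ containing one but not the other, and then to use the ideal $C^*(E|_U;G|_U,\lambda)$ to distinguish $L^u$ from $L^v$. But the existence of such a $U$ is precisely the $T_0$ condition on $G^{(0)}/G$, and part~(1) carries \emph{no} separation hypothesis on the orbit space. If $[u]$ and $[v]$ are topologically indistinguishable in $G^{(0)}/G$, every open saturated set containing one contains the other, your ideals cannot tell them apart, and your ``more concretely, evaluate on a function supported near one orbit'' variant runs into the same obstruction: any $f\in C_c(E;G)$ vanishing on $E|_{[v]}$ will vanish on $E|_{[u]}$ as well. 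The paper avoids this entirely by working not with ideals but with the $C_0(G^{(0)})$-module structure: there is a homomorphism $R:C_0(G^{(0)})\to M(C^*(E;G,\lambda))$ given by $(R(\phi)f)(\gamma)=\phi(r(\gamma))f(\gamma)$, and the extension $\overline{L^u}\circ R$ is (after a unitary) pointwise multiplication on $L^2([u],\mu_{[u]})$. Since $[u]$ and $[v]$ are \emph{disjoint} as subsets of the Hausdorff space $G^{(0)}$ (regardless of the orbit-space topology), these multiplication representations are inequivalent by \cite[Lemma~4.15]{dana-ccr}, and hence so are $L^u$ and $L^v$. This is the step you are missing; it does not follow from the ideal picture.

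For part~(2) your sketch is in the right spirit, but note that ``open onto its range'' does not give that $\operatorname{range}\Psi$ is open in the spectrum, so the first route you suggest does not get off the ground. The paper simply invokes the surjectivity argument from \cite[Proposition~3.3]{MW92}, observing that it uses only $T_1$; that argument is indeed the quasi-orbit/support analysis you outline at the end.
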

\begin{proof}
(1) We start by showing that $\Psi$ is continuous.   Fix  $\xi, \eta \in \cc(E;G)$ and
$a\in C^*(E;G,\lambda)$. We claim  that the map $x\mapsto ( L^x(a)\xi\,|\eta)_x$ is continuous.  To see why this is so, first consider $f\in \cc(E;G)$ and note that
\[
( L^x(f)\xi\,|\eta)_x=\int_G(f*\xi)(\gamma)\overline{\eta(\gamma)}\, d\lambda_x(j(\gamma)),
\]
where the convolution $f*\xi$ is taking place in $C_c(E;G)$.  Since $(f*\xi)(\gamma)\overline{\eta(\gamma)}$
 has compact support, the continuity of $x\mapsto ( L^x(f)\xi\,|\eta)_x$ follows from the
continuity of the Haar system.  It now follows from an $\epsilon/3$
argument that the map $x\mapsto
( L^x(a)\xi\,|\,\eta)_x$ is also continuous.

Now suppose that $u_n\to u$ in $G^{(0)}$; we will show by way of
contradiction that $[L^{u_n}]\to [L^u]$. Suppose that $[L^{u_n}]$
does not converge to $[L^u]$.  Then there exists a neighbourhood $O$
of $[L^u]$ such that $[L^{u_n}]\notin O$ frequently. By passing to a
subsequence and relabeling  we may assume $[L^{u_n}]\notin O$ for
all $n$.  Let $J$ be the ideal of $C^*(E;G,\lambda)$ such that $O=
\{ \rho \in \cs(E;G,\lambda)^{\wedge} \mid \rho(J) \neq 0\}$. So
there exists $a\in J$ such that   and $L^u(a) \neq 0$ and
$L^{u_n}(a) = 0$ for all $n$. Now choose functions $\xi,\eta \in
\cc(G)$ such that
\begin{equation}
    ( L^u(a)\xi\mid \eta )_u\neq 0 \notag;
\end{equation} but now
\begin{equation}
0= ( L^{u_n}(a)\xi\,|\, \eta)_{u_n}
\rightarrow ( L^u(a)\xi\, |\, \eta)_{u}\neq 0\notag,
\end{equation}
contradicting the continuity of $x\mapsto  ( L^x(a)\xi\mid \eta )_x$.  
So $[L^{u_n}]\to [L^u]$, and it follows that $\Psi$ is continuous.

Next we show that $\Psi$ is injective.  Let $u,v\in G^{(0)}$ and
suppose that $L^u$ and $L^v$ are unitarily equivalent.  We will show
that $[u]=[v]$.  By \cite[Lemma~3.1]{MW92} there is a homomorphism
$R:C_0(G^{(0)})\to M(C^*(E;G,\lambda))$ defined by
\[
(R(\phi)f)(\gamma)=\phi(r(\gamma))f(\gamma)
\]
for $f\in C_c(E;G)$.  In
the proof of \cite[Lemma~3.2]{MW92} Muhly and Williams show that
$L^u$ is unitarily equivalent to a representation
$T^u:C^*(E;G,\lambda)\to B(L^2([u],\mu_{[u]})$, and that
$N_u:=\overline{T^u}\circ R:C_0(G^{(0)})\to B(L^2([u],\mu_{[u]})$
has formula $N_u(\phi)\eta(x)=\phi(x)\eta(x)$ for $\eta\in
L^2([u],\mu_{[u]})$.
Since $L^u$ and $L^v$ are unitarily equivalent so are $T^u$ and
$T^v$, and thus so are $N_u$ and $N_v$.
But now if $[u]\neq [v]$ then $[u]\cap[v]=\emptyset$ and
\cite[Lemma~4.15]{dana-ccr} implies that $N^u$ and $N^v$ are not
unitarily equivalent, a contradiction.  So $[u]= [v]$, and
hence $\Psi$ is injective.

To see that $\Psi$ is open onto its range we show that
$\Psi^{-1}:\range \Psi\to G^{(0)}/G$ is continuous. We argue by
contradiction: suppose that $[L^{u_n}]\to [L^{u}]$ in
$C^*(E;G,\lambda)$ but $[u_n]\not\to [u]$ in $G^{(0)}/G$. Let $U_0$
be an open neighbourhood of $[u]$ in $G^{(0)}/G$; let $q:G^{(0)}\to
G^{(0)}/G$ be the quotient map  and set $U=q^{-1}(U_0)$.  By passing
to a subsequence and relabeling we may assume that $u_n\notin U$ for
all $n$. By Lemma~\ref{lem-exact} $C^*(E|_U;G|_U)$ is  isomorphic to
an ideal $I$ of $C^*(E;G)$. Set $F:=G^{(0)}\setminus U$. Fix $f\in
C_c(E;G)$ such that $f(\gamma)=0$ for $\gamma\in E|_{F}$ (such $f$
are dense in $I$) and fix $\xi\in \H_{u_n}$.  Then
\begin{align*}
\|L^{u_n}(f)\xi\|^2_{u_n}&=\int_G|L^{u_n}(f)\xi(\gamma)|^2\, d\lambda_{u_n}(j(\gamma))\\
&=\int_G\Big( \int_G f(\gamma\alpha^{-1})\xi(\alpha)\,
d\lambda_{u_n}(j(\alpha)) \Big)^2\, d\lambda_{u_n}(j(\gamma)).
\end{align*}
Now consider the inner integrand: there
$r(\gamma\alpha^{-1})=r(\gamma)$ and $s(\gamma)=u_n$, so
$\gamma\alpha^{-1}\in E|_{[u_n]}$. But $U$ is saturated with
$u_n\notin U$, so $\gamma\alpha^{-1}\in E|_{F}$ and
$f(\gamma\alpha^{-1})=0$. Thus $\|L^{u_n}(f)\xi\|^2_{u_n}=0$, and
since $\xi$ was fixed we have $L^{u_n}(f)=0$.  It now follows that
$I\subseteq\ker L^{u_n}$ for all $n$.  But now $[L^{u_n}]\notin \hat
I$ for all $n$, contradicting that $\hat I$ is an open neighbourhood
of $[L^u]$ and $[L^{u_n}]\to [L^{u}]$.  Thus $\Psi$ is open.

(2) In view of (1) it suffices to show that $\Psi$ is surjective.
See the proof of \cite[Proposition~3.3]{MW92} (the proof given there
only uses that $\go/G$ is $T_1$).
\end{proof}

\begin{prop}\label{prop-orbits} Suppose that $E$ is a second-countable, locally compact, Hausdorff,
$\T$-groupoid such that $G:=E/\T$  is a principal groupoid with Haar system $\lambda$. Then
\begin{enumerate}
\item $C^*(E;G,\lambda)$ is liminal if and only if the orbit space $G^{(0)}/G$ is $T_1$; and
\item $C^*(E;G, \lambda)$ is postliminal if and only if the orbit space $G^{(0)}/G$ is $T_0$.
\end{enumerate}
\end{prop}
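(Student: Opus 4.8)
The plan rests on two preliminary observations about the irreducible representations $L^u$, where throughout I write $A:=C^*(E;G,\lambda)$, $Y:=\go/G$, and $q\colon\go\to Y$ for the (open, continuous) quotient map. First, I would record what the proof of \propref{MWthm} already establishes: $L^u$ annihilates the ideal $C^*(E|_V;G|_V,\lambda)$ whenever $V\subseteq\go$ is open and saturated with $u\notin V$. Via \lemref{lem-exact} this says that, for each closed saturated $F\subseteq\go$ with $u\in F$, the representation $L^u$ factors through the quotient $A/C^*(E|_{\go\setminus F};G|_{\go\setminus F},\lambda)\cong C^*(E|_F;G|_F,\lambda)$. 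Secondly, when the orbit $[u]$ happens to be closed, $G|_{[u]}$ is a transitive principal groupoid; a (twisted) groupoid-equivalence argument then shows that $E|_{[u]}$ is equivalent to the trivial $\T$-twist over the single unit $u$, whose twisted $C^*$-algebra is $\C$, so that $C^*(E|_{[u]};G|_{[u]},\lambda)\cong K(\H_u)$. Taking $F=[u]$ and combining the two observations, I obtain the key statement: \emph{if the orbit $[u]$ is closed, then $L^u$ factors through $K(\H_u)$ and $L^u(A)=K(\H_u)$, so $L^u$ is a liminal representation.}

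For part~(1): if $Y$ is $T_1$ then every orbit is closed, so every $L^u$ is a liminal representation by the statement above; and by \propref{MWthm}(2) every irreducible representation of $A$ is unitarily equivalent to some $L^u$, whence $A$ is liminal. Conversely, if $A$ is liminal then $\hat A$ is $T_1$ (a standard property of liminal $C^*$-algebras), and since \propref{MWthm}(1) makes $\Psi$ a homeomorphism of $Y$ onto the subspace $\range\Psi\subseteq\hat A$, the space $Y$ is $T_1$.

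For part~(2): if $A$ is postliminal then the canonical map $\hat A\to\Prim A$ is a homeomorphism (a standard property of postliminal $C^*$-algebras) and $\Prim A$ is $T_0$, so $\hat A$ is $T_0$, and $Y\cong\range\Psi$ is $T_0$ exactly as in part~(1). For the converse, assume $Y$ is $T_0$. Since $Y$ is an open continuous image of the second-countable, locally compact, Hausdorff space $\go$, I would invoke the Glimm--Effros dichotomy for the orbit equivalence relation to conclude that $Y$ is almost $T_1$: every nonempty relatively closed subset of $Y$ contains a nonempty relatively open subset that is $T_1$ in its relative topology. Using this I would build a strictly increasing transfinite chain $\emptyset=U_0\subseteq U_1\subseteq\cdots\subseteq U_\gamma=Y$ of open subsets of $Y$, continuous at limit ordinals, with each $U_{\alpha+1}\setminus U_\alpha$ relatively open in $Y\setminus U_\alpha$ and $T_1$; second countability forces $\gamma$ to be a countable ordinal. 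Setting $I_\alpha:=C^*(E|_{q^{-1}(U_\alpha)};G|_{q^{-1}(U_\alpha)},\lambda)$, an ideal of $A$ by \lemref{lem-exact}, the family $\{I_\alpha\}_{\alpha\le\gamma}$ is a composition series for $A$ (continuity at limits holds because a compactly supported function is supported in some $U_\alpha$), and applying \lemref{lem-exact} to $G|_{\go\setminus q^{-1}(U_\alpha)}$ together with its open saturated subset $q^{-1}(U_{\alpha+1}\setminus U_\alpha)$ identifies $I_{\alpha+1}/I_\alpha$ with $C^*(E|_{M_\alpha};G|_{M_\alpha},\lambda)$, where $M_\alpha:=q^{-1}(U_{\alpha+1}\setminus U_\alpha)$. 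Since $G|_{M_\alpha}$ is principal and its orbit space is the $T_1$ space $U_{\alpha+1}\setminus U_\alpha$, part~(1) shows each $I_{\alpha+1}/I_\alpha$ is liminal; a $C^*$-algebra with a composition series having liminal subquotients is postliminal, so $A$ is postliminal.

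The two points I expect to be genuinely substantive are: (a) the identification $C^*(E|_{[u]};G|_{[u]},\lambda)\cong K(\H_u)$ for a closed orbit, which needs the twisted analogue of groupoid equivalence together with the remark that the restriction of the twist to a single orbit is trivial; and (b) importing the Glimm--Effros dichotomy to get that a $T_0$ orbit space is almost $T_1$, which is what starts the transfinite composition series. The remainder is bookkeeping: that principality and the Haar system pass to restrictions to locally closed saturated subsets of $\go$ (handled as in \lemref{lem-exact}), that the orbit space of such a restriction carries the relative topology from $Y$, and the standard facts about liminal and postliminal $C^*$-algebras and their spectra quoted above.
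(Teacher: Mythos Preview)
Your argument is correct, but the paper takes a much shorter route. For the ``hard'' directions (orbit space $T_1$ implies liminal, $T_0$ implies postliminal), the paper simply observes that the isotropy groups of $E$ are all copies of $\T$, hence amenable, and that $E^{(0)}/E=G^{(0)}/G$; it then invokes \cite[Theorem~6.1]{C2} to conclude that the \emph{untwisted} algebra $C^*(E,\sigma)$ is liminal (respectively postliminal), and finishes by noting that $C^*(E;G,\lambda)$ is a quotient of $C^*(E,\sigma)$ and that these properties pass to quotients. Your approach instead stays inside the twisted category: you handle liminality by identifying $C^*(E|_{[u]};G|_{[u]},\lambda)$ with the compacts via twisted groupoid equivalence, and postliminality via a composition series with liminal subquotients---the same transfinite construction the paper does deploy, but only later, in the proof of \thmref{MWthm-better}. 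What you gain is independence from the external reference \cite{C2}; what you pay is the need to supply the twisted equivalence theorem for a closed orbit (your point~(a)), which is genuine work, though available in the literature. The forward directions (liminal/postliminal implies $T_1$/$T_0$) agree in both proofs.
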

\begin{proof} Let $\sigma$  be the Haar system for $E$ from \eqref{eq-Haar}.

(1) Suppose $\cs(E;G,\lambda)$ is liminal. Since $C^*(E;G,\lambda)$
is separable, its spectrum is
 $T_1$  by \cite[9.5.3]{dix}.
By Proposition~\ref{MWthm}(1),  $\Psi: G^{(0)}/G \to
\cs(E;G,\lambda)^\wedge$, $[u]\mapsto [L^u]$ is a continuous
injection. So for each $u\in G^{(0)}$,
$\{[u]\}=\Psi^{-1}(\{[L^{u}]\})$ is closed in $G^{(0)}/G$. Thus
$G^{(0)}/G$ is $T_1$.

Conversely, suppose $G^{(0)}/G$ is $T_1$. Since all the isotropy
groups of $E$ are amenable, $\cs(E,\sigma)$ is liminal by
\cite[Theorem~6.1]{C2}. Now $\cs(E;G, \lambda)$ is liminal because
quotients of  liminal algebras are liminal.

(2) Proceed as in (1) using \cite[9.5.2]{dix} and
\cite[Theorem~6.1]{C2}.
\end{proof}

We now improve Proposition~\ref{MWthm} by using a composition series
to reduce to the $T_1$ case:

\begin{thm}
\label{MWthm-better} Suppose that $E$ is a second-countable, locally
compact, Hausdorff, $\T$-groupoid such that $G:=E/\T$  is a
principal groupoid with Haar system $\lambda$. If $G^{(0)}/G$ is
$T_0$  then $\Psi$ is a homeomorphism of $G^{(0)}/G$ onto
$C^*(E;G,\lambda)^\wedge$.
\end{thm}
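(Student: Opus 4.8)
The plan is to bootstrap from Proposition~\ref{MWthm}, which already handles the $T_1$ case, by decomposing the $T_0$ orbit space along a transfinite composition series of open saturated subsets whose successive quotients have $T_1$ orbit spaces. First I would recall the standard fact (as in \cite{dix} or \cite{MW92, MRW96}) that when $G^{(0)}/G$ is $T_0$ and $G^{(0)}$ is second-countable, one can build an increasing family $\emptyset = U_0 \subseteq U_1 \subseteq \cdots$ of open saturated subsets of $G^{(0)}$, indexed by countable ordinals, exhausting $G^{(0)}$, such that for each $\alpha$ the relative orbit space $(U_{\alpha+1}\setminus U_\alpha)/G$ is $T_1$ (indeed each step peels off the set of points whose orbit is locally closed in the complement of the previous stage, exactly as one passes to a $T_1$ space). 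Via Lemma~\ref{lem-exact}, this gives a corresponding composition series of ideals of $C^*(E;G,\lambda)$ whose subquotients are the algebras $C^*(E|_{U_{\alpha+1}\setminus U_\alpha}; G|_{U_{\alpha+1}\setminus U_\alpha}, \lambda)$.

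Next I would assemble the homeomorphism. We already know from Proposition~\ref{MWthm}(1) that $\Psi$ is a continuous injection which is open onto its range; Proposition~\ref{prop-orbits}(2) tells us the algebra is postliminal, so every irreducible representation is, up to equivalence, one that appears in one of the subquotients. Applying Proposition~\ref{MWthm}(2) to each subquotient $C^*(E|_{U_{\alpha+1}\setminus U_\alpha}; G|_{U_{\alpha+1}\setminus U_\alpha},\lambda)$ — which is legitimate since that relative orbit space is $T_1$ — identifies its spectrum with $(U_{\alpha+1}\setminus U_\alpha)/G$ via the corresponding $L^{u}$. Since the spectrum of $C^*(E;G,\lambda)$ is the disjoint union (as a set) of the spectra of the subquotients, and likewise $G^{(0)}/G$ is the disjoint union of the $(U_{\alpha+1}\setminus U_\alpha)/G$, this shows $\Psi$ is a bijection. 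Combined with the fact from Proposition~\ref{MWthm}(1) that $\Psi$ is continuous and open onto its range, surjectivity upgrades "open onto its range" to "open", so $\Psi$ is a homeomorphism.

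I would need to be a little careful about how representations of subquotients relate to representations of the whole algebra: given the short exact sequence $0 \to I \to A \to A/I \to 0$, with $I = C^*(E|_{U}; G|_U,\lambda)$, an irreducible representation of $A$ either restricts nontrivially to $I$ (and is then the unique extension of an irreducible representation of $I$) or kills $I$ (and factors through $A/I$); moreover $\hat I$ is an open subset of $\hat A$ and $(A/I)^\wedge$ the closed complement, and the bijections respect the orbit-space decomposition $U/G \sqcup F/G = G^{(0)}/G$ because restriction of the representation $L^u$ to the ideal corresponds exactly to whether $[u]$ lies in $U$ or not (this compatibility can be read off from the formula \eqref{eq-Lu} and the description of $k$ and $r$ in Lemma~\ref{lem-exact}). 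Iterating this through the transfinite series, and using that both the topology on $\hat A$ and the topology on $G^{(0)}/G$ are determined by their restrictions to the strata together with the inclusion pattern of the open pieces, lets one conclude $\Psi$ is a homeomorphism globally.

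The main obstacle I anticipate is purely bookkeeping rather than conceptual: verifying that the stratification of $\hat A$ coming from the composition series is genuinely compatible with the stratification of $G^{(0)}/G$, i.e.\ that $\Psi$ carries $U_\alpha/G$ onto $\widehat{(I_\alpha)}$ for every ordinal $\alpha$, and that no irreducible representation is "lost" at limit ordinals. One must check that $\bigcup_\alpha U_\alpha = G^{(0)}$ really does force $\bigcup_\alpha \widehat{I_\alpha} = \hat A$ — equivalently that $\bigcap_\alpha (A/I_\alpha)$-type obstructions vanish — which follows from postliminality (Proposition~\ref{prop-orbits}(2)) since a nonzero postliminal algebra always has an irreducible representation landing in some subquotient, but it needs to be said. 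Once that compatibility is in hand, the homeomorphism statement is immediate from Proposition~\ref{MWthm}(1), so I would keep the write-up short and lean heavily on the already-established openness onto the range.
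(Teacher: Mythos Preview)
Your approach is essentially the same as the paper's: build a transfinite composition series of open saturated sets, apply Proposition~\ref{MWthm}(2) on each subquotient, and conclude surjectivity of $\Psi$ (whence the homeomorphism by Proposition~\ref{MWthm}(1)). Two small points where the paper is cleaner: first, it sources the composition series concretely---Ramsay's theorem \cite{ramsay} gives that $G^{(0)}/G$ is almost Hausdorff, and then Glimm's argument \cite{glimm} yields successive quotients $V_\alpha\setminus V_{\alpha-1}$ that are actually \emph{Hausdorff}, not merely $T_1$; your ``peel off locally closed orbits'' description is vaguer and needs justification that it really produces open sets. Second, your appeal to postliminality is unnecessary and the stated reason for the limit-ordinal step is off: the paper simply notes that if $\alpha$ is a limit ordinal then $I_\alpha$ is generated by the $I_\beta$ with $\beta<\alpha$, so $\pi(I_\alpha)\neq 0$ forces $\pi(I_\beta)\neq 0$ for some $\beta<\alpha$---this is pure ideal theory and holds for any $C^*$-algebra, postliminal or not.
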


\begin{proof}
We adapt the argument of \cite[Proposition~5.1]{C}. By
Proposition~\ref{MWthm}(1) it suffices to show that $\Psi$ is onto.
Since $G$ is $\sigma$-compact the equivalence relation
$R=\{(r(\gamma), s(\gamma)):\gamma\in G\}$ is an $F_\sigma$ set, so
$G^{(0)}/G$ is almost Hausdorff by \cite[Theorem~2.1]{ramsay}. A
Zorn's lemma argument (see the discussion on page~25 of
\cite{glimm}) gives an ordinal $\gamma$ and a collection
$\{V_\alpha:\alpha\leq \gamma\}$ of open subsets of $G^{(0)}/G$ such
that $V_0=\emptyset$, $V_\gamma= G^{(0)}/G$, $\beta<\alpha$ implies
$V_\beta\subseteq V_\alpha$, if $\alpha$ is a limit ordinal then
$V_\alpha=\cup_{\beta<\alpha} V_\beta$, and if $\alpha$ is not a
limit ordinal  or $0$ then $V_\alpha \setminus V_{\alpha-1}$ is an
open Hausdorff subset of $(G^{(0)}/G)\setminus V_{\alpha-1}$. Let
$q:G^{(0)}\to G^{(0)}/G$ be the  quotient map and set
$U_\alpha:=q^{-1}(V_\alpha)$.  By Lemma~\ref{lem-exact}, for each
$\alpha\leq \gamma$, there is an isomorphism $k_\alpha$ of
$C^*(E|_{U_\alpha};G|_{U_\alpha}, \lambda)$ onto  an ideal
$I_\alpha$  of $C^*(E; G, \lambda)$.

Now fix an irreducible representation $\pi$ of $C^*(E; G, \lambda)$.
Let $\alpha$ be the smallest element of the set $\{\lambda\leq
\gamma:\pi(I_\lambda)\neq 0\}$.  Note that $\alpha$ is not a limit
ordinal (if $\alpha$ were a limit ordinal then
$U_\alpha=\cup_{\beta<\alpha}U_\beta$ and $I_\alpha$ is the ideal
generated by the $I_\beta$ with $\beta<\alpha$.  But then
$\pi(I_\alpha)\neq 0$ implies $\pi(I_\beta)\neq 0$ for some
$\beta<\alpha$, contradicting the minimality of $\alpha$.) So
$\pi(I_\alpha)\neq 0$ and $\pi(I_{\alpha-1})=0$.  It follows that
$\pi$ is the canonical extension of the representation
$\pi|_{I_\alpha}$ and that $\pi|_{I_\alpha}$ factors through a
representation of $I_\alpha/I_{\alpha-1}$. By Lemma~\ref{lem-exact},
$I_\alpha/I_{\alpha-1}$ is isomorphic to $C^*(E|_{U_\alpha\setminus
U_{\alpha-1}};G|_{U_\alpha\setminus U_{\alpha-1}}, \lambda)$.   The
orbit space  $V_\alpha\setminus V_{\alpha-1}$ of
$G|_{U_\alpha\setminus U_{\alpha-1}}$  is Hausdorff, hence $T_1$.
 Now apply Proposition~\ref{MWthm}(2) to get that $\pi|_{I_\alpha}$ is unitarily
  equivalent $L^u(E|_{U_\alpha};G|_{U_\alpha})\circ k_\alpha^{-1}$ for some $u\in U_\alpha\setminus U_{\alpha-1}$.
Note that $L^u(E|_{U_\alpha};G|_{U_\alpha})\circ
k_\alpha^{-1}=L^u(E; G)|_{I_\alpha}$. Since  $\pi|_{I_\alpha}$ is
unitarily equivalent to  $L^u(E|_{U_\alpha};G|_{U_\alpha})\circ
k_\alpha^{-1}$ it follows that their canonical extensions are
unitarily equivalent. Thus $\pi$ is unitarily equivalent to $L^u(E;
G)$. So $\Psi$ is onto.
\end{proof}


\section{The twisted groupoid $\cs$-algebras with bounded trace}

We recall  \cite[Definition~3.1]{CaH}: a locally compact, Hausdorff
groupoid $G$ is \emph{integrable} if for every compact subset $N$ of
$G^{(0)}$,
\begin{equation}\label{eq-defn-integrable}
\sup_{x\in N}\{\lambda^x(s^{-1}( N))\}<\infty.
\end{equation}
Equivalently, by \cite[Lemma~3.5]{CaH},  $G$ is integrable if and
only if for each $z \in \go$, there exists an open neighbourhood $U$
of $z$ in $\go$ such that
\begin{equation}
\label{open} \sup_{x\in U}\{\lambda^x(s^{-1}(U))\}<\infty.
\end{equation}
%
So if a
groupoid fails to be integrable, then there exists a $z \in \go$ so
that
\begin{equation}
\label{open2} \sup_{x\in U}\{\lambda^x(s^{-1}(U))\}=\infty,
\end{equation}
for all open neighbourhoods $U$ of $z$, and we say $G$ \emph{fails to
be integrable at $z$.}

 In Proposition~\ref{prop1} we will prove that if $G$ is integrable, then $C^*(E;G,\lambda)$ has
 bounded trace.  To do so, we need to know that all irreducible representations of $C^*(E;G,\lambda)$ are equivalent to $L^u$ for some $u\in G^{(0)}$.
 We proved in \cite[Lemma~3.9]{CaH} that if $G$ is integrable, then all the orbits are closed;
unfortunately, there is a gap in the proof (the proof assumes
implicitly that orbits are locally closed at Equation 3.2 in
\cite{CaH}). Lemma~\ref{lem-locallyclosedorbits}  establishes that if $G$ is a principal
integrable groupoid, then the orbits are indeed locally closed. The
proof of \cite[Lemma~3.9]{CaH} then goes through as written. The proof of  Lemma~\ref{lem-locallyclosedorbits}  is based on the proof of \cite[Lemma~2.1]{AaH} which establishes  similar results in the transformation-group setting.

\begin{lemma}\label{lem-locallyclosedorbits}
Let $G$ be a second-countable, locally compact, Hausdorff, principal groupoid and let
$z\in G^{(0)}$.
\begin{enumerate}
\item If the orbit $[z]$ is not locally closed then for every open neighbourhood $V$ of $z$ in $G^{(0)}$, $\lambda_z(r^{-1}(V))=\infty$.
\item If $G$ is integrable then the orbits are locally closed.
\end{enumerate}
\end{lemma}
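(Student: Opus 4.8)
The plan is to derive part (2) from part (1): if $G$ is integrable then by \eqref{open} every $z\in G^{(0)}$ has an open neighbourhood $U$ with $\sup_{x\in U}\lambda^x(s^{-1}(U))<\infty$, and since $\lambda_z(r^{-1}(U))=\lambda^z(s^{-1}(U)\cap r^{-1}(z))\le\sup_{x\in U}\lambda^x(s^{-1}(U))<\infty$, the contrapositive of part (1) forces $[z]$ to be locally closed. So the work is all in (1). Throughout I would use that $G$ (which carries the Haar system $\lambda$ presupposed by the statement) has open range and source maps, that $\lambda$ is continuous with full support, and that $G$ and $G^{(0)}$ are metrizable. The quantity to control is $\lambda_z(r^{-1}(V))=\lambda^z(s^{-1}(V)\cap r^{-1}(z))$, the $\lambda^z$-mass of the arrows with range $z$ and source in $V$.

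The first step is to push the failure of local closedness down to $z$ itself. If $[z]$ is not locally closed then $\overline{[z]}\setminus[z]$ is not closed, hence accumulates at a point of $[z]$; write that point as $s(\gamma_0)$ with $\gamma_0\in r^{-1}(z)$, lift a sequence $w_k\in\overline{[z]}\setminus[z]$ with $w_k\to s(\gamma_0)$ through the open map $s$ to a sequence $\delta_k\to\gamma_0$ with $s(\delta_k)=w_k$, and set $v_k:=r(\delta_k)$. Then $v_k\to z$, and $v_k\in\overline{[z]}\setminus[z]$ because $\overline{[z]}$ is saturated (again by openness of $s$) while $[v_k]=[w_k]\neq[z]$. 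Now fix an open $V\ni z$, choose $k$ with $v:=v_k\in V$, pick $x_n\in[z]$ with $x_n\to v$, and let $\gamma_n\in r^{-1}(z)$ be the unique (by principality) arrow with $s(\gamma_n)=x_n$. No subsequence of $(\gamma_n)$ converges, for a limit $\gamma$ would satisfy $r(\gamma)=z$ and $s(\gamma)=v$, forcing $v\in[z]$; thus $(\gamma_n)$ leaves every compact subset of $G$. This step replaces the global-translation argument available in the transformation-group setting of \cite[Lemma~2.1]{AaH}.

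The second step is to thicken the escaping arrows uniformly. Fix $\rho>0$ with $\overline{B(v,\rho)}$ compact and $B(v,\rho)\subseteq s^{-1}(V)$ (possible since $s^{-1}(V)$ is an open subset of $G$ containing the unit $v$), and for large $n$ put $\Omega_n:=r^{-1}(x_n)\cap B(v,\rho/2)$ and $W_n:=\gamma_n\Omega_n\subseteq r^{-1}(z)$. Because left translation by $\gamma_n$ is a homeomorphism of $r^{-1}(x_n)$ onto $r^{-1}(z)$ carrying $\lambda^{x_n}$ to $\lambda^z$, we get $\lambda^z(W_n)=\lambda^{x_n}(\Omega_n)=\lambda^{x_n}(B(v,\rho/2))$, and by lower semicontinuity of $u\mapsto\lambda^u(B(v,\rho/2))$ together with full support of $\lambda^v$ this exceeds a fixed $\delta>0$ for all large $n$. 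Also $s(W_n)=s(\Omega_n)\subseteq V$, so $W_n\subseteq s^{-1}(V)\cap r^{-1}(z)$, and $\overline{W_n}$ is compact since it lies inside the image of the compact set $r^{-1}(x_n)\cap\overline{B(v,\rho/2)}$ under that homeomorphism.

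The last step is the disjointification, which I expect to be the main obstacle: the translates $W_n=\gamma_n\Omega_n$ need not be small (left translation is not isometric), so one cannot force disjointness by shrinking. The way out is that $(\gamma_n)$ leaves every compact set, and hence so does each family $\gamma_n(r^{-1}(x_n)\cap\overline{B(v,\rho)})$, and in particular so do the $W_n$: if $\gamma_n\alpha_n\in C$ for some compact $C\subseteq G$ and some $\alpha_n\in\overline{B(v,\rho)}$, then passing to convergent subsequences of $(\alpha_n)$ and $(\gamma_n\alpha_n)$ would make $\gamma_n=(\gamma_n\alpha_n)\alpha_n^{-1}$ converge. Since in addition each $\overline{W_n}$ is compact, one selects inductively $n_1<n_2<\cdots$ with $W_{n_{i+1}}$ disjoint from $\overline{W_{n_1}}\cup\cdots\cup\overline{W_{n_i}}$ and $\lambda^z(W_{n_i})\ge\delta$; then the $W_{n_i}$ are pairwise disjoint subsets of $s^{-1}(V)\cap r^{-1}(z)$, so $\lambda_z(r^{-1}(V))\ge\sum_i\lambda^z(W_{n_i})=\infty$, which proves (1).
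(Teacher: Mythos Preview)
Your argument is correct, and part (2) is derived from (1) exactly as in the paper. For (1) you follow the same overall strategy as the paper --- produce arrows escaping to infinity, translate a fixed set of positive measure along them, and sum the resulting disjoint pieces --- but the execution differs in a couple of instructive ways.

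The paper asserts without justification that $(\overline{[z]}\setminus[z])\cap W\neq\emptyset$ for every open neighbourhood $W$ of $z$, and then works entirely at $z$: it fixes a compact symmetric neighbourhood $K$ of $z$ in $G$ with $r(KU)\subseteq V$, picks $\gamma^{(1)}=z,\gamma^{(2)},\dots,\gamma^{(k)}\in G_z$ with $r(\gamma^{(i)})\in U$ and $\gamma^{(j)}(\gamma^{(i)})^{-1}\notin K^2$, and uses the right-translates $K\gamma^{(i)}$ together with the bound $\lambda_{r(\gamma^{(i)})}(K)\ge c$ from \cite[Lemma~3.10(1)]{CaH}. Your first step, by contrast, explicitly transports the accumulation of $\overline{[z]}\setminus[z]$ to a point $v$ near $z$ (and inside $V$) via the open-map lifting; this is exactly the argument needed to justify the paper's unproved assertion, but you use it differently, basing the whole construction at $v$ rather than at $z$. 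You then use metric balls $B(v,\rho/2)$ and lower semicontinuity of $u\mapsto\lambda^u(B(v,\rho/2))$ in place of the compact symmetric $K$ and the quoted lemma, and obtain disjointness from the escape-from-compacta property of the $W_n$ rather than by the explicit $K^2$-separation. The paper's version is a little cleaner once its implicit claim is granted; yours is more self-contained and makes the role of the openness of $r,s$ more visible.
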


\begin{proof}
(1) Let $W$ be any open neighbourhood of $z$ in $G^{(0)}$.
We claim that for every compact neighbourhood $L$
of $z$ in $G$ there exists $\gamma_L\in G\setminus L$ such that
$s(\gamma_L)=z$ and $r(\gamma_L)\in W$.  Suppose there exists an $L$ for which no such $\gamma$ exists.
Since $[z]$ is not locally closed, 
$(\overline{[z]}\setminus[z])\cap W\neq\emptyset$. Let $y\in (\overline{[z]}\setminus[z])\cap W$. Then there exists
$\{\gamma_i\}\subseteq G$ such that $s(\gamma_i)=z$, $r(\gamma_i)\to
y$.  Then  $r(\gamma_i)\in W$ eventually.  So by assumption,
$\gamma_i\in L$ eventually. By passing to a subsequence  we may assume that $\gamma_i\to\gamma\in L$.    But now
$s(\gamma)=z$ and $r(\gamma)=y$, and hence $y\in[z]$, a
contradiction. This proves the claim.

Let $V$ be an open neighbourhood of $z$ in $G^{(0)}$ and let $M\in \P$.
There exists an open neighbourhood $U$ of $z$ in $G^{0}$
 and a compact symmetric neighbourhood $K$ of $z$ in $G$ such that $r(KU)\subseteq V$.
Let $c>0$ such that $\lambda_u(K)\geq c$ for $u\in U$ by [\textbf{6}, Lemma~3.10(1)]. 
Choose $k\in\P$ such that $kc>M$ and 
 $\gamma^{(1)},\dots,
\gamma^{(k)}$ as follows.  Set $\gamma^{(1)}=z$. By
the claim there exists $\gamma^{(2)}\in G\setminus
(K^2\gamma^{(1)})$ such that $s(\gamma^{(2)})=z$ and
$r(\gamma^{(2)})\in U$. Next, note that $K^2\gamma^{(1)}\cup K^2\gamma^{(2)}$ is compact, so by the claim  there exists
$
\gamma^{(3)}\in G\setminus (K^2\gamma^{(1)} \cup K^2\gamma^{(2)})
$
with $s(\gamma^{(3)})=z$ and $r(\gamma^{(3)})\in U$.  Continue.
 
Now
$r(\gamma^{(i)})\in U$ for $1\leq i\leq k$ and $\gamma^{(j)}(\gamma^{(i)})^{-1}\in G\setminus K^2$ when $i\neq j$.
Thus  $r(K\gamma^{(i)})\subseteq r(KU)\subseteq V$ and $K\gamma^{(i)}\cap K\gamma^{(j)}=\emptyset$
when $i\neq j$.
We have
\[
\lambda_z(r^{-1}(V))
\geq\lambda_z\Big(\bigcup_{i=1}^k K\gamma^{(i)}\Big)=\sum_{i=1}^k \lambda_z(K\gamma^{(i)})
=\sum_{i=1}^k \lambda_{r(\gamma^{(i)})}(K)\geq kc>M.
\]
Since $M$ was arbitrary, $\lambda_z(r^{-1}(V))=\infty$.

(2) Suppose there exists $z\in G^{(0)}$ such that $[z]$ is not
locally closed. Let $V$ be any open, relatively compact neighbourhood of$z$ in $G$.  By (1),
$\lambda_z(r^{-1}(V))=\infty$ thus
$\sup\{\lambda^x(s^{-1}(\overline{V})):x\in\overline{V}\}\geq \sup\{\lambda_x(r^{-1}(V)):x\in V\}=\infty$ 
so $G$ fails to integrable at $z$.
\end{proof}

\begin{prop}
\label{prop1} Suppose that $E$ is a second-countable, locally
compact, Hausdorff, $\T$-groupoid such that $G:=E/\T$  is a
principal groupoid with Haar system $\lambda$.  If $G$ is integrable
then the twisted groupoid $C^*$-algebra $\cs(E;G,\lambda)$ has
bounded trace.
\end{prop}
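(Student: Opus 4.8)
The plan is to show that the continuous functions in $C_c(E;G)$ of a suitable form give a dense set of bounded-trace elements. Since $G$ is integrable, Lemma~\ref{lem-locallyclosedorbits}(2) tells us the orbits of $G$ are locally closed, so the orbit space $G^{(0)}/G$ is $T_0$ (indeed, a space in which every orbit is locally closed satisfies $T_0$ because the equivalence relation is nice enough); hence by Theorem~\ref{MWthm-better} the map $\Psi:G^{(0)}/G\to C^*(E;G,\lambda)^\wedge$, $[u]\mapsto[L^u]$, is a homeomorphism. In particular \emph{every} irreducible representation of $C^*(E;G,\lambda)$ is (unitarily equivalent to) some $L^u$, and two such are equivalent exactly when the orbits agree. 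This reduces the problem to estimating $\tr\big(L^u(b^*b)\big)$ for $b\in C_c(E;G)$ uniformly in $[u]$.

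Next I would compute the trace. For $\xi\in C_c(E;G)$, the operator $L^u(\xi)$ acts on $\H_u\subseteq L^2(E_u,\sigma_u)$ by $L^u(\xi)\eta=\xi*\eta$, so $L^u(\xi)$ is an integral operator whose kernel is (a $\T$-equivariant version of) $(\gamma,\alpha)\mapsto \xi(\gamma\alpha^{-1})$. Then $L^u(\xi^*\xi)=L^u(\xi)^*L^u(\xi)$ is a positive trace-class operator and its trace is obtained by integrating the kernel of $L^u(\xi)^*L^u(\xi)$ along the diagonal; descending the $\T$-action to $G$ this should come out to something like
\[
\tr\big(L^u(\xi^*\xi)\big)=\int_G\int_G |\xi(\gamma\alpha^{-1})|^2\,d\lambda^u(j(\alpha))\,d\lambda_u(j(\gamma)).
\]
The inner integral only depends on $r(\gamma)$ (and equals $\int_G|\xi(\beta)|^2\,d\lambda^{r(\gamma)}(j(\beta))$ after the substitution $\beta=\gamma\alpha^{-1}$), so the whole expression is bounded by $\|\xi\|_\infty^2$ times an iterated measure of the compact set $K:=r\big(j(\supp\xi)\big)$, namely $\sup_{x}\lambda^x(s^{-1}(K))$ against $\lambda_u(r^{-1}(K))$. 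Because $\supp\xi$ is compact this is finite for each $u$; the point is to make it \emph{uniform in $u$}, and this is exactly what integrability of $G$ buys us via \eqref{eq-defn-integrable}: $\sup_{x\in K}\lambda^x(s^{-1}(K))<\infty$ and, choosing $\xi$ supported so that $K$ is contained in a single orbit-relatively-compact set, $\lambda_u(r^{-1}(K))$ is likewise uniformly bounded. So the map $[L^u]\mapsto\tr(L^u(\xi^*\xi))$ is bounded on the spectrum, i.e.\ $\xi^*\xi$ is a bounded-trace element.

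Finally, I would note that elements of the form $\xi^*\xi$ with $\xi\in C_c(E;G)$ span a dense subset of $C^*(E;G,\lambda)$ (since $C_c(E;G)$ is dense and by a standard polarization/approximate-identity argument their linear span is a dense ideal), so the linear span of bounded-trace elements is dense and $C^*(E;G,\lambda)$ has bounded trace. The main obstacle I anticipate is the trace computation: justifying that $L^u(\xi)$ is Hilbert--Schmidt with the expected kernel on the closed subspace $\H_u$ of $L^2(E_u,\sigma_u)$ (not on all of $L^2$), and that its trace is genuinely the diagonal integral of the composed kernel, requires care with the $\T$-equivariance and with Fubini; once the formula for $\tr(L^u(\xi^*\xi))$ is in hand, the uniform bound is a direct application of the definition of integrability together with \cite[Lemma~3.10]{CaH}-type estimates on the Haar system over compact sets.
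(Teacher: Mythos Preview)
Your strategy is essentially the paper's: use integrability to get locally closed orbits (hence $T_0$), invoke Theorem~\ref{MWthm-better} so that every irreducible is an $L^u$, then bound $\tr(L^u(\cdot))$ uniformly via integrability. The paper streamlines the trace step by citing \cite[Proposition~4.1]{MW92}, which gives directly
\[
\tr(L^u(f))=\int_G f(r(\gamma))\,d\lambda_u(j(\gamma))
\]
for \emph{every} $f\in C_c(E;G)^+$, not just $f=\xi^*\xi$; this is a single integral and the bound $\|f\|_\infty\cdot\lambda_u\big(r^{-1}(j(\supp f))\big)$ follows in one line. Your Hilbert--Schmidt computation recovers the same formula in the special case $f=\xi^*\xi$, so it is correct but more work than necessary (and the ``main obstacle'' you flag is exactly what \cite[Proposition~4.1]{MW92} handles).

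One point to tighten: your uniform bound on $\lambda_u(r^{-1}(K))$ is not quite justified as written. Integrability only gives $\sup_{x\in K}\lambda^x(s^{-1}(K))<\infty$, i.e.\ a bound over $x$ in the \emph{compact} set $K$, not over all $u\in G^{(0)}$; your phrase ``choosing $\xi$ supported so that $K$ is contained in a single orbit-relatively-compact set'' does not fix this. The missing observation is that $\tr(L^u(f))$ depends only on $[u]$ (since $L^u\simeq L^v$ for $[u]=[v]$), and is zero unless $[u]$ meets $K$; hence one may assume $u\in K$, and then integrability applies. This is implicit in the paper's final inequality as well.
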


\begin{proof}
Since $G$ is an integrable principal groupoid, the orbits of $G$ are
locally closed by Lemma~\ref{lem-locallyclosedorbits}. By Theorem~\ref{MWthm-better}, $[u]\mapsto[L^u]$ is
a homeomorphism of $G^{(0)}/G$ onto the spectrum of $\cs(E;G,\lambda)$.  Fix $u\in \go$ and
$f\in C_c(E;G)^+$.    Then, by  \cite[Proposition~4.1]{MW92},  $L^u(f)$ is trace class with
\[
\tr(L^u(f)) = \int_G f(r(\gamma))\ d\lambda_u(j(\gamma)).
\]
Thus
\begin{align}
\tr(L^u(f))&\leq \|f\|_{\infty} \lambda_u \{ j(\gamma) : r(\gamma) \in \supp f\}\notag\\
&= \|f\|_{\infty} \lambda_u \{ j(\gamma) : \gamma \in r^{-1}(\supp f)\}\notag\\
&= \|f\|_{\infty} \lambda_u \{ j(\gamma) : j(\gamma) \in j(r^{-1}(\supp f))\}\notag\\
&= \|f\|_{\infty} \lambda_u \{ j(\gamma) : j(\gamma) \in r^{-1}(j(\supp f))\}\notag\\
&\leq \|f\|_{\infty} \underset{ u \in j(\supp
f)}{\sup}\{\lambda_u(r^{-1}(j(\supp f)))\}<\infty\notag
\end{align}
because $\supp f$ is compact and  $G$ is integrable.  Thus
$C_c(E;G)^+$ is contained in the ideal spanned by the bounded-trace
elements, and hence $C^*(E;G,\lambda)$ has bounded trace.
\end{proof}

In Theorem~\ref{thm-hard} below we show that if $\cs(E;G,\lambda)$
has bounded trace then $G$ is integrable. The proof is modeled
on \cite[Theorem~4.3]{MW92}, where Muhly and Williams prove that if
$\cs(E; G,\lambda)$ has continuous trace then $G$ is a proper
groupoid. Their proof strategy is the following.  Suppose that $G$
is not proper.  Then $G$ fails to be proper at some $z\in \go$. This
gives a sequence $\{x_n\}\subseteq G$ which is eventually disjoint
from every compact subset of $G$ and $r(x_n)$, $s(x_n)\to z$. (In
the terminology of \cite[Definition~3.6]{CaH}, $u_n:=s(x_n)$
converges $2$-times in $G^{(0)}/G$ to $z$.)

To show that $\cs(E;G,\lambda)$ does not have continuous trace, they
construct a function $F$ and show that $F^*F$ has certain tracial
properties. In particular, they partition the Hilbert space $\H_u$
of $L^u$ into  a direct sum $\H_{u,1}\oplus \H_{u,2}$ and write
$P_{u,i}$ for the projection of $\H_u$ onto $\H_{u,i}$. First they
show that $u\mapsto \tr(L^u(F^*F)P_{u,1})$ is continuous at $z$.
Second, they have a really clever and technical argument to show
that there is a constant $a>0$ such that
$\tr(L^{u_n}(F^*F)P_{u_n,2})\geq \|L^{u_n}(F^*F)P_{u_n,2}\|\geq a$
eventually. Next they push $F^*F$ into the Pedersen ideal of
\cite[Theorem~5.6.1]{ped} using a function $q\in C_c(0,\infty)$ such
that $q(t)=t$ for $t\in [a,\|F^*F\|]$. This gives an element
$d=q(F^*F)$ in the Pedersen ideal such that
$\tr(L^{u_n}(d)P_{u_n,2})\geq a>0$ eventually. It follows that
$[L^u]\mapsto \tr(L^u(d))$ is not continuous at $[L^z]$. Thus $d$ is
not a continuous-trace element.  But the Pedersen ideal is the
minimal dense ideal of a $C^*$-algebra, so the ideal spanned by the
continuous-trace elements cannot be dense in  $\cs(E;G,\lambda)$.
Thus $\cs(E;G,\lambda)$ does not have continuous trace.

\begin{thm}
\label{thm-hard}
 Suppose that $E$ is a second-countable, locally compact, Hausdorff, $\T$-groupoid such that $G:=E/\T$
 is a principal groupoid with Haar system $\lambda$. The following are equivalent:
 \begin{enumerate}
 \item the twisted groupoid $C^*$-algebra $\cs(E;G,\lambda)$ has bounded trace;
 \item $G$ is integrable; and
 \item $C^*(G)$ has bounded trace.
 \end{enumerate}
\end{thm}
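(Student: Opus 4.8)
The plan is to prove $(2)\Rightarrow(1)$, $(1)\Rightarrow(2)$, and then close the loop with $C^*(G)$ by treating $C^*(G)$ as the special case of a trivial $\T$-groupoid. The implication $(2)\Rightarrow(1)$ is exactly Proposition~\ref{prop1}, so nothing remains there. For $(1)\Rightarrow(2)$, I would argue by contraposition: assume $G$ fails to be integrable, so by \eqref{open2} there is a point $z\in\go$ at which $G$ fails to be integrable, i.e.\ $\sup_{x\in U}\lambda^x(s^{-1}(U))=\infty$ for every open neighbourhood $U$ of $z$. The goal is to produce an element $d$ in the Pedersen ideal of $\cs(E;G,\lambda)$ such that $[L^u]\mapsto\tr(L^u(d))$ is unbounded, which forces the span of bounded-trace elements to miss the minimal dense ideal and hence not be dense.

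The heart of the argument is the construction described in the paragraph preceding Theorem~\ref{thm-hard}, adapted from \cite[Theorem~4.3]{MW92} but with "proper" replaced by "integrable." First I would use the failure of integrability at $z$ to extract, for each $n$, finitely many elements $\gamma^{(1)}_n,\dots,\gamma^{(k_n)}_n\in G$ with $s=u_n$, $r(\gamma^{(i)}_n)$ in a shrinking neighbourhood of $z$, and with $k_n\to\infty$, such that the sets $K\gamma^{(i)}_n$ are pairwise disjoint for a fixed compact neighbourhood $K$ of $z$ (this is the same combinatorial mechanism used in the proof of Lemma~\ref{lem-locallyclosedorbits}(1), now applied along a sequence). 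Lifting to $E$, I would build a function $F\in C_c(E;G)$ supported near these translates and estimate $\tr(L^{u_n}(F^*F))$ from below by a sum of $k_n$ roughly-equal positive terms, so that $\tr(L^{u_n}(F^*F))\to\infty$ while $\tr(L^z(F^*F))$ stays finite (the latter by the trace formula of \cite[Proposition~4.1]{MW92}, since $\supp F$ meets only boundedly many $G$-translates through $z$). Because Theorem~\ref{MWthm-better} identifies the spectrum with $\go/G$ whenever it is $T_0$—and integrability is not needed for that identification once we know $\go/G$ is $T_0$; but here we do not yet know $T_0$, so I would instead just use that $u\mapsto[L^u]$ lands in the spectrum and that $[u_n]\ne[z]$ eventually, giving distinct points $[L^{u_n}]$ on which $\tr$ of a fixed positive element is unbounded. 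Pushing $F^*F$ through a suitable $q\in C_c(0,\infty)$ with $q(t)=t$ on $[a,\|F^*F\|]$ yields $d=q(F^*F)$ in the Pedersen ideal with $\tr(L^{u_n}(d))$ unbounded, as desired.

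For the equivalence with $(3)$, I would observe that $C^*(G)=\cs(G;G,\lambda)$ is the twisted groupoid $C^*$-algebra of the trivial $\T$-groupoid $E=G\times\T$ over $G$ (with the obvious action and Haar system), so $(1)\Leftrightarrow(2)$ applied to this $E$ gives $(3)\Leftrightarrow(2)$ directly. Alternatively, and perhaps more cleanly, I would use the quotient map $\Upsilon\colon C^*(E,\sigma)\to\cs(E;G,\lambda)$: bounded trace passes to quotients, so if $C^*(E,\sigma)$ has bounded trace then so does $\cs(E;G,\lambda)$; and $C^*(E,\sigma)$ has bounded trace iff $E$ is integrable (by the transformation we just described applied with the roles of $E$ and $G$), while $E$ is integrable iff $G$ is, because the Haar systems are related by \eqref{eq-Haar} and $\T$ is compact, so saturations and the relevant suprema only change by the finite factor $\lambda(\T)=1$. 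Either route reduces $(3)$ to $(2)$ with only routine bookkeeping.

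\textbf{Main obstacle.} The genuinely delicate step is the lower bound $\tr(L^{u_n}(F^*F))\to\infty$: one must choose $F$ so that the $k_n$ contributions from the disjoint translates $K\gamma^{(i)}_n$ are each bounded below by a constant independent of $n$ and $i$, which requires the uniform lower bound $\lambda_u(K)\ge c$ on a neighbourhood (as in Lemma~\ref{lem-locallyclosedorbits}) together with careful control of how $F$ interacts with the $\T$-action when restricted to $E_{u_n}$. This is the "really clever and technical argument" flagged in the discussion before the theorem, now carried out in the bounded-trace rather than continuous-trace regime; the simplification is that we no longer need the subtle two-piece decomposition $\H_u=\H_{u,1}\oplus\H_{u,2}$ and the continuity of one piece, only an unbounded total trace.
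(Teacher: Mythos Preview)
Your overall architecture---$(2)\Rightarrow(1)$ via Proposition~\ref{prop1}, $(1)\Rightarrow(2)$ by contraposition through the Pedersen ideal, and $(2)\Leftrightarrow(3)$ by specialising to the trivial twist (the paper simply quotes \cite[Theorem~4.4]{CaH})---matches the paper. The gap is your claimed simplification in the hard direction.

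You assert that one needs ``only an unbounded total trace'' on $F^*F$ and can drop the subspace decomposition. This fails precisely at the Pedersen-ideal step: knowing $\tr(L^{u_n}(F^*F))\to\infty$ does \emph{not} give $\tr\bigl(L^{u_n}(q(F^*F))\bigr)\to\infty$, because if the large trace arises from many eigenvalues all below the cutoff $a/3$ then $q$ annihilates them and $\tr(L^{u_n}(d))=0$. What one actually needs is that $L^{u_n}(F^*F)$ has at least $l$ eigenvalues each $\ge a$, for $l$ as large as desired. The paper secures this by decomposing $\H_{u_n}$ into $l$ mutually orthogonal, $L^{u_n}(F)$-invariant summands $\H_{u_n,i}$---one for each of the $l$ sequences witnessing $l$-times convergence, produced by \cite[Proposition~3.11]{CaH}---and proving a \emph{norm} lower bound $\|L^{u_n}(F^*F)P_{u_n,i}\|\ge a$ on every piece (Lemmas~\ref{bound} and~\ref{bound2}); invariance plus orthogonality then yield $l$ eigenvalues $\ge a$, so $\tr(L^{u_n}(d))\ge la>M$ after applying $q$. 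Far from disappearing, the two-piece decomposition of \cite{MW92} becomes an $l$-piece one here. Note also the order of quantifiers, which the paper flags explicitly: the constant $a$ must be found \emph{before} $l$ is chosen (since $la>M$ determines $l$), so one first proves the $i=1$ bound alone (Lemma~\ref{bound}), then fixes $l$, then invokes $l$-times convergence to manufacture the remaining sequences and establishes the bound for all $i$ (Lemma~\ref{bound2}). Your sketch extracts all $k_n$ translates first and only then looks for a uniform bound, which is the wrong order. Finally, the comparison with $\tr(L^z(F^*F))$ is superfluous here---that was the continuity argument in \cite{MW92}, not a boundedness one.
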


Since $G$ is principal, (2) and (3) are equivalent by
\cite[Theorem~4.4]{CaH}. By Proposition~\ref{prop1}, if $G$ is
integrable then $\cs(E;G,\lambda)$ has bounded trace, so it
remains to show that (1) implies (2). We prove the contrapositive.
So suppose that $G$ is not integrable, say $G$ fails to be
integrable at  $z \in \go$. Then by \cite[Proposition~3.11]{CaH}
there exists a sequence $\{u_n\}$ in $\go$ so that $\{u_n\}$
converges to $z$, $u_n\neq z$ for all $n$, and $u_n$ \emph{converges
$k$-times in $\go/G$ to $z$}, for every $k \in \P$. That is, there
exist $k$ sequences $ \{ \gamma_n^{(1)}\},\
\{\gamma_n^{(2)}\},\dots,\{\gamma_n^{(k)}\}\subseteq G $ such that
\begin{enumerate}\label{ktimes}
\item $r(\gamma_n^{(i)})\to z$ and
$s(\gamma_n^{(i)})=u_n$ for $1\leq i\leq k$;
\item if $1\leq i<j\leq k$ then $\gamma_n^{(j)}(\gamma_n^{(i)})^{-1}\to\infty$ as $n\to\infty$, in the sense that $\{\gamma_n^{(j)}(\gamma_n^{(i)})^{-1}\}$
admits no convergent subsequence.
\end{enumerate}
We will prove that $\cs(E;G,\lambda)$ does not have  bounded trace.

Since $C^*$-algebras with bounded trace are liminal, we may assume
that the orbits are closed in $G^{(0)}$ by
Proposition~\ref{prop-orbits}. Let $M > 0$ be given.  In order to
show that $\cs(E;G,\lambda)$ does not have bounded trace, we will
show that there exists an element $d$ of the Pedersen ideal of
$\cs(E;G, \lambda)$ such that $\tr(L^{u_n}(d))> M$ eventually (see
\eqref{eq-Lu} for the definition of  the irreducible representations
$L^{u_n}$).  Since $M$ is arbitrary and the Pedersen ideal is the
minimal dense ideal \cite[Theorem~5.6.1]{ped}, this shows that the
ideal of bounded-trace elements cannot be dense.

We will use the same function $F$ as Muhly and Williams and adapt
their proof  as follows.
\begin{enumerate}
\item Show that there exists a constant $a>0$ such that $\|L^{u_n}(F^*F)P_{u_n,1}\|\geq a$.
\item Fix $l\in\N$ such that $la>M$.
Since $\{u_n\}$ converges $k$-times to $z$ in $\go/G$ for any $k$,
there exist $l$ sequences $\{u_n=\gamma_n^{(1)}\},
\{\gamma_n^{(2)}\}, ..., \{\gamma_n^{(l)}\}\subseteq G$ satisfying
the items (1) and (2) listed above.
\item Using that $\gamma_n^{(j)}(\gamma_n^{(i)})^{-1}\to\infty$, partition $\H_{u_n}$ into $l$ summands $\H_{u_n,i}$.
\item Write $P_{u_n,i}$ for the projection of $\H_{u_n}$ onto $\H_{u_n,i}$.  Show that, for every $1\leq i\leq l$,   $\|L^{u_n}(F^*F)P_{u_n, i}\|\geq a$ eventually.
\item Push $F^*F$ into the Pedersen ideal.
\end{enumerate}
Note that the order of events is subtle. We have to find the
constant $a$ before we can choose an appropriate $l$ and then get
$l$ sequences in $G$ which witness the $l$-times convergence of the
sequence $\{u_n\}$.    We retain, as much as possible, the notation
of \cite{MW92}.

We start by explaining the  function $F$ (see \eqref{eq-F} below).
Fix a function $g \in \cc^+(\go)$ so that $0 \leq g \leq 1$ and $g$
is identically one on a neighbourhood $U$ of $z$.  By
\cite[Lemma~2.7]{MW90} there exist symmetric, open, conditionally
compact neighbourhoods $W_0$ and $W_1$ in $G$ such that $ \go
\subseteq W_0 \subseteq \overline{W}_0 \subseteq W_1 $ and
$\overline{W_1}z\setminus W_0z\subseteq
r^{-1}(\go\setminus\supp(g))$. Choose symmetric, relatively compact,
open neighbourhoods $V_0$ and $V_1$ of $z$ in $G$ such that
$\overline{V_0}\subseteq V_1$.  Apply Lemma~\ref{danalemma1} to
obtain a compact neighbourhood $A$ of $z$ in $\go$ such that
\[
(\overline{W_1}^7 \overline{V_1}  \setminus W_0V_0)\cap G_A
 \subseteq r^{-1}(\go \setminus \supp g).\]
 Thus
\begin{equation}\label{eq-fixed}
(\overline{W_1}^7 \overline{V_1}\, \overline{W_1}^7 \setminus W_0V_0
W_0)\cap G_A \subseteq r^{-1}(\go \setminus \supp g).
\end{equation}
For $\gamma\in E_A$, set
\begin{equation}
g^{(1)}(\gamma) =
\begin{cases}
g(r(\gamma)), & \text{if $j(\gamma)\in\overline{W_1}^7 \overline{V_1}\,
\overline{W_1}^7$}\\
0, & \text{if $j(\gamma) \notin W_0V_0 W_0$},
\end{cases}
\notag
\end{equation}
where $j: E \to E/ \T$ is the quotient map.  This gives a well-defined function
$g^{(1)}$ on $E_A$ which, by Tietze's extension theorem extends to a well-defined function, also called   $g^{(1)}$, in $C_c(E)$.

Next choose a self-adjoint $b\in C_c(G)$ such that $0\leq b\leq 1$,
$b$ is identically one on $W_0V_0W_0^2V_0W_0$ and $b$ vanishes off
$\overline{W_1}^4\overline{V_1}\, \overline{W_1}^4$. Also choose a
compact neighbourhood $C$ of $z$ in $\go=E^{(0)}$ such that
$i(C\times\T)\supseteq \go\cap\supp g^{(1)}$. Define
$l:i(C\times\T)\to \T$ by $l(i(u,t))=1$ and extend $l$ to a function
$l\in C_c(E)$. By replacing $l$ by $(l+l^*)/2$ we may assume $l$ is
self-adjoint.  Set $h=\Upsilon(l)$ to obtain a self-adjoint $h\in
C_c(E;G)$ such that $h(i(u,1))=1$ for all $u\in C$. Finally,  define
\begin{equation}\label{eq-F}
F(\gamma) = g(r(\gamma))g(s(\gamma))b(j(\gamma))h(\gamma).
\end{equation}
Note that the $h$  ensures $F\in C_c(E;G)$, and that $F$ is self-adjoint because $h$ and $b$ are.

For each $n$, define
\[
\H_{u_n,1} = \H_{u_n}\cap L^2(E_{u_n} \cap j^{-1}(W_0V_0W_0),
\sigma_{u_n})
\]
and let $P_{u_n,1}$ be the projection onto $\H_{u_n,1}$. The
calculation \cite[pages 140--141]{MW92} shows that
$L^{u_n}(F)\H_{u_n,1}\subseteq \H_{u_n,1}$.

By \cite[Lemma~2.9]{MW90} there is a neighbourhood $V_2$ of $z$ in
$G$  and a conditionally compact, symmetric neighbourhood Y
of $\go$ in $G$ such that $V_2 \subseteq V_0$, and
\begin{equation}\label{eq-Y}
\gamma \in V_2\Longrightarrow r(Y\gamma)\subseteq U.
\end{equation}
 In particular, since $r(Y\gamma)=r(Yr(\gamma))$, we get $r(Y\gamma)\subseteq  U$ whenever
$r(\gamma) \in V_2$.  Since $u_n\to z$ we may assume that
$u_n\in V_2\cap C$ for all $n$.

The following lemma closely resembles \cite[Lemma~4.5]{MW92} and our
proof is similar; we replace the unbounded sequence $\{x_n\}$
appearing in \cite[Lemma~4.5]{MW92} with the sequence $\{u_n\}$
(corresponding to $\{r(x_n)\}$) which causes us to consider the
projection onto $\H_{u_n,1}$ rather than the projection onto
$\H_{u_n,1}^\perp$).

\begin{lemma}\label{lemDP}
\label{bound} Let $F$ be the function defined at \eqref{eq-F}. There
exist an $a >0$ and a neighbourhood $V_3\subseteq V_2\cap C$ of $z$
in $G$  such that
\[
\|L^{u_n}(F^*F)P_{u_n,1}\| \geq a
\]
whenever $u_n\in V_3$.
\end{lemma}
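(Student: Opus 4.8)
The plan is to construct a unit vector in $\H_{u_n,1}$ on which $L^{u_n}(F)$ acts with norm bounded below by a fixed constant, for all $u_n$ in a small enough neighbourhood of $z$; then $\|L^{u_n}(F^*F)P_{u_n,1}\| = \|L^{u_n}(F)P_{u_n,1}\|^2$ is bounded below by the square of that constant. Following \cite[Lemma~4.5]{MW92} closely, the natural test vector is (a normalized version of) the restriction to $E_{u_n}$ of a function supported near the unit $u_n$ — roughly, $\xi_n = c_n\, h(\cdot)\,\mathbf{1}_{j^{-1}(Y)}(\cdot)$ restricted to $E_{u_n}$, where $Y$ is the conditionally compact symmetric neighbourhood from \eqref{eq-Y} and $c_n$ is chosen so that $\|\xi_n\|_{u_n}=1$. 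Because $Y \subseteq W_0V_0W_0$ (after shrinking if necessary) and $h$ is $\T$-equivariant, $\xi_n$ lies in $\H_{u_n,1}$, so $P_{u_n,1}\xi_n=\xi_n$.

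The core of the argument is to compute, or rather estimate from below, $\langle L^{u_n}(F)\xi_n \mid \xi_n\rangle_{u_n}$. Writing this out using \eqref{eq-Lu}, it becomes a double integral over $G$ (against $\lambda^{u_n}$, $\lambda_{u_n}$) of $F(\gamma\alpha)\xi_n(\alpha^{-1})\overline{\xi_n(\gamma)}$. The support conditions force $j(\gamma), j(\alpha)$ to lie in $Y$, hence (using \eqref{eq-Y} and $u_n\in V_2$) $r(\gamma), r(\alpha), r(\gamma\alpha) \in U$, so on the relevant region $g(r(\gamma))=g(s(\gamma))=g(r(\gamma\alpha))=1$ and similarly the $b$-factor and the $h$-factor in $F(\gamma\alpha)$ are identically $1$ (this is exactly why $b$ was built to be $1$ on $W_0V_0W_0^2V_0W_0$ and $h$ to be $1$ near $z$). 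Thus on the support $F(\gamma\alpha)$ collapses to a constant, and the integral reduces to $\iint \xi_n(\alpha^{-1})\overline{\xi_n(\gamma)} = \bigl(\int \xi_n\bigr)$-type expression, which by Cauchy–Schwarz and the normalization is comparable to $\|\xi_n\|_{u_n}^2 = 1$ up to a factor controlled by $\lambda_{u_n}(j^{-1}(Y)\cap\ \cdot\ )$. Here one uses \cite[Lemma~3.10]{MW90} (already invoked in the excerpt, cited as [\textbf{6}, Lemma~3.10(1)]) to bound the Haar measure of $Y$-sets below and above uniformly for $u_n$ near $z$, producing a uniform lower bound $a' > 0$ for $\langle L^{u_n}(F)\xi_n\mid\xi_n\rangle_{u_n}$, and then $\|L^{u_n}(F)P_{u_n,1}\| \geq \langle L^{u_n}(F)\xi_n\mid\xi_n\rangle_{u_n} \geq a'$; set $a = (a')^2$ and let $V_3$ be the neighbourhood on which all these uniform estimates hold.

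The main obstacle is bookkeeping the interlocking support conditions: one must verify that the region where $\xi_n(\gamma)$, $\xi_n(\alpha^{-1})$ are both nonzero forces $\gamma\alpha$ (and all the relevant range/source points) into the zone where every factor of $F$ equals $1$, which is precisely the purpose of the nested neighbourhoods $W_0 \subseteq \overline{W_0}\subseteq W_1$, $V_0, V_1, V_2$ and the compact sets $A, C$ set up before the lemma — the powers of $\overline{W_1}$ appearing in \eqref{eq-fixed} are there to absorb products of up to several $Y$- and $V$-translates. A secondary point requiring care is the $\T$-equivariance/averaging: since we work in $C_c(E;G)$ and the inner product integrates over $G$, one should check $\xi_n$ is genuinely in $\H_{u_n}^0$ (i.e. $\T$-equivariant with compact support in $E_{u_n}$) and that the reduction of $F(\gamma\alpha)$ to a constant is compatible with the $h$-factor, which is the image under $\Upsilon$ of a function equal to $1$ on $i(C\times\{1\})$. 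Once these support facts are pinned down the estimate itself is a routine Cauchy–Schwarz computation, exactly paralleling \cite[Lemma~4.5]{MW92} with $\{u_n\}$ in place of $\{x_n\}$ and $P_{u_n,1}$ in place of $P_{u_n,1}^\perp$.
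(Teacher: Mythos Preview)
Your overall architecture is right --- build a unit vector in $\H_{u_n,1}$ supported over $Yu_n$ and estimate $L^{u_n}(F)$ on it --- and the support bookkeeping for $g$ and $b$ is exactly as you say. But there is a real gap in your treatment of $h$. You assert that on the relevant region ``the $h$-factor in $F(\gamma\alpha)$ [is] identically $1$'' because ``$h$ [was built] to be $1$ near $z$''. This is false: $h\in C_c(E;G)$ is $\T$-equivariant, so $h(t\cdot\gamma)=t\,h(\gamma)$, and on the fibre $i(\{u\}\times\T)$ over a unit $u\in C$ one has $h(i(u,t))=t$, not $1$. Thus $h$ cannot be constant on any open set of $E$; the only normalization available is $h(i(u,1))=1$ for $u\in C$. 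Consequently $F(\gamma\alpha^{-1})$ does \emph{not} collapse to a constant on the support of the integrand, and the reduction to ``a $(\int\xi_n)$-type expression'' breaks down.

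This is precisely the complication that distinguishes the twisted case from \cite{MW90}, and the paper's proof is designed around it. The paper chooses a regular cross section $c$ of $j$, writes the phase of $\gamma$ as $t(\gamma)\in\T$ via $\gamma c(j(\gamma))^{-1}=i(r(\gamma),t(\gamma))$, and takes the test vector $\xi_n(\gamma)=t(\gamma)\,\zeta_n(j(\gamma))\,\chi_{Yu_n}(j(\gamma))$, where $\zeta_n:Yu_n\to\T$ is a Borel phase chosen so that $\zeta_n(x)c(u_n)c(x)^{-1}$ lies in a neighbourhood $\O_1$ of $i(C\times\{1\})$ on which $\operatorname{Re}h>\tfrac12$. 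One then shows that for $\gamma\in\tilde Yu_n$ the argument $\gamma\,\zeta_n(x)\,c(x)^{-1}$ lands in a slightly larger $\O_2$ where $\operatorname{Re}h>\tfrac14$, and estimates the \emph{real part} of $L^{u_n}(F)\xi_n(\gamma)$ pointwise by $\tfrac14\lambda_{u_n}(Y)$, rather than asserting $h\equiv1$. Your proposed test vector $h(\cdot)\mathbf 1_{j^{-1}(Y)}$ may be salvageable, but you would still have to control products like $h(\gamma\alpha^{-1})h(\alpha)\overline{h(\gamma)}$, and that requires exactly the same cross-section and real-part machinery; the Cauchy--Schwarz step you outline does not bypass it.
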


\begin{proof}
Let $Y$ be as above. Let  $\O_1,\O_2, c, Y_0$ be as in  \cite{MW92}.
Thus $\O_1$ and $\O_2$  are open neighbourhoods of $i(C \times 1)$ in
$E$ so that for every $\eta \in \O_1$, Re$(h(\eta))>\frac{1}{2}$ and
for every $\eta \in \O_2$, Re$(h(\eta))>\frac{1}{4}$, and $c$ is a
regular cross section of $j$ (see \cite[Proof of Lemma~3.2]{MW92}
for definition of regular). The set $Y_0$ is a conditionally
compact, symmetric  neighbourhood $Y_0$ of $\go$ in $G$ such that
$CY_0 \subseteq j(\O_1)$ and $Y\subseteq Y_0$.




Let $\T_0 = \{t_i\}_{i=1}^{\infty}$ be a countable dense subset
of $\T$.  If $x \in
Yu_n$, then
\[
j(c(u_n)c(x)^{-1}) = u_n x^{-1} \in CY\subseteq CY_0 \subseteq j(\O_1).\]
So there exists a $t \in \T_0$
so that $t \cdot c(u_n)c(x)^{-1} \in \O_1.$ Define $\zeta_n : Yu_n \to \T_0$ by $\zeta_n(y) = t_j$ where $j =
\min\{k:t_k \cdot c(u_n)c(y)^{-1} \in \O_1\}$.
Thus apart from its domain $\zeta$ is the function defined in  \cite[Lemma~4.5]{MW92};
that our $\zeta_n$ is Borel is proved as is done for the function in \cite[Lemma~4.6]{MW92}.

By another argument very similar to that of \cite[Lemma~2.9]{MW90}, there  exists a conditionally compact, symmetric neighbourhood $\tilde{Y}$ of $E^{(0)}$
in  $E$ such that $\tilde Yj^{-1}(CY)\subseteq \O_2$ and $j(\tilde Y)=Y$.
%
%
%
Since we are assuming that $u_n\in V_2\cap C$ for all $n$,  if $x\in Yu_n$ the claim gives
\begin{equation}\label{eq-O2}
\tilde{Y} \zeta_n(x)u_nc(x)^{-1} \subseteq \tilde YC\tilde Y\subseteq \tilde Yj^{-1}(CY)\subseteq \O_2
\end{equation}
for all $n$.


Muhly and Williams use a  function $t:E \to \T$ defined as follows:  for
each $\gamma \in E$, consider the element $\gamma
c(j(\gamma))^{-1}$.  This is in the image of $i$ and equals $i(u,s)$
for some $s \in \T$; then $t(\gamma) := s.$ Let $\chi_n$ be the characteristic function of $Yu_n$ and define
$\xi_n:E \to \C$ by
\[
\xi_n(\gamma) = t(\gamma)\zeta_n(j(\gamma))\chi_n(j(\gamma)).
\]
 Since all of the functions involved in defining $\xi_n$ are Borel,
so is $\xi_n$.
Also, it is clear that $\xi_n$ is bounded and has
compact support contained in $ \supp (\xi_n) \subseteq
j^{-1}(\overline{Yu_n})$.  Notice that  $t(s \cdot \gamma)
= st(\gamma)$ so that $\xi_n \in \H_{u_n}$; since also $Yu_n\subseteq W_0V_0W_0$ we have $\xi_n\in \H_{u_n,1}$.
(This is where we have departed from the Muhly-Williams proof - their unbounded sequence $\{x_n\}$ used
in place of our $\{u_n\}$ ensures their $\xi_n$ has support in the orthogonal complement of $\H_{u_n,1}$.)

Now fix $\gamma\in \tilde Yu_n$ and compute:\label{pg-integrals}
\begin{align}
L^{u_n}(F)(\xi_n)(\gamma) &= F*\xi_n(\gamma)\notag
= \int_G F(\gamma \alpha^{-1})\xi_n(\alpha) \
d\lambda_{u_n}(j(\alpha))\notag \\
 &= \int_G g(r(\gamma \alpha^{-1}))g(s(\gamma \alpha^{-1}))b(j(\gamma \alpha^{-1})) h(\gamma
\alpha^{-1})
\xi_n(\alpha) \ d\lambda_{u_n}(j(\alpha))\notag \\
 &=g(r(\gamma)) \int_G g(r(\alpha))b(j(\gamma \alpha^{-1})) h(\gamma
\alpha^{-1}) \xi_n(\alpha) \
d\lambda_{u_n}(j(\alpha)).\label{eq-666}
\end{align}
Note that the integrand is zero unless $j(\alpha)\in Yu_n$. Let
$j(\alpha)\in Yu_n$; then $j(\gamma\alpha^{-1}) \in Yu_nu_n^{-1}Y
\subseteq YV_0Y \subseteq W_0V_0W_0$, and hence $b(j(\gamma \alpha^{-1}) = 1$.
Also $j(\supp \xi_n ) \subseteq Yu_n$, so
\begin{align}
\eqref{eq-666}&= g(r(\gamma)) \int_{Yu_n} g(r(\alpha))h(\gamma \alpha^{-1})
\xi_n(\alpha) \ d\lambda_{u_n}(j(\alpha))\notag\\
&=g(r(\gamma)) \int_{Yu_n} g(r(x))h(\gamma c(x)^{-1}) \xi_n(c(x)) \
d\lambda_{u_n}(x)\notag
\intertext{by letting $x=j(\alpha)$ and
noting that $r(\alpha) = r(x)$ and $c(x)=c(j(\alpha))=\alpha$. Since
$r(\tilde Y u_n)=r(Yu_n)\subseteq U$ by our choice of $Y$ at \eqref{eq-Y} and since $g$ is identically one on
$U$, this is}
&= \int_{Yu_n} h(\gamma c(x)^{-1}) \xi_n(c(x)) \
d\lambda_{u_n}(x).\label{eq777}
\end{align}
By the definition of $\xi_n$ and using that $t\circ c=1$ we get $\xi_n(c(x))=\zeta_n(x)$ for $x\in Yu_n$, and since $h$ is $\T$-equivariant we get
\begin{align*}
\eqref{eq777}&= \int_{Yu_n} h(\gamma \zeta_n(x) c(x)^{-1}) \
d\lambda_{u_n}(x).
\end{align*}
But for $x\in Yu_n$, $\gamma \zeta_n(x) c(x)^{-1}\in\O_2$  by \eqref{eq-O2}, so $\text{Re}( h(\gamma \zeta_n(x) c(x)^{-1})) > \frac{1}{4}$. So
\[
\text{Re}\big(L^{u_n}(F)(\xi_n)(\gamma)\big) \geq \dfrac{1}{4} \lambda_{u_n}(Yu_n)
=\dfrac{1}{4} \lambda_{u_n}(Y)
\]
and hence
\[
|L^{u_n}(F)(\xi_n)(\gamma)|^2 \geq \frac{1}{16}\lambda_{u_n}(Y)^2.
\]
Now
\begin{align}
\|L^{u_n}(F)\xi_n \|^2
&= \int_G |L^{u_n}(F)(\xi_n)(\gamma)|^2 \ d\lambda_{u_n}(j(\gamma))\notag\\
&\geq\int_{\{j(\gamma): \gamma \in \tilde{Y}u_n\}}|L^{u_n}(F)(\xi_n)(\gamma)|^2 d\lambda_{u_n}(j(\gamma))\notag\\
&\geq \int_{Yu_n} \dfrac{1}{16} \lambda_{u_n}(Y)^2 \ d\lambda_{u_n}(j(\gamma))=\frac{1}{16}\lambda_{u_n}(Y)^3.\notag
\end{align}

By \cite[Lemma~3.10(2)]{CaH}, applied to the conditionally compact
neighbourhood $Y$,  there exists a neighbourhood $V_3$ of $z$ and
$k\in\N$ such that  if $v \in V_3$, $\lambda_v(Y) \geq k > 0$. By
shrinking, we may take $V_3\subseteq V_2\cap C$.

Let $u_n\in V_3$. Then $\lambda_{u_n}(Y) \geq k$. Now let  $a$ be a real number so that $0 <a \leq   \frac{1}{16}k^2$.
Since  $\|\xi_n\|^2 =  \lambda_{u_n}(Y)$  we get
\begin{align}
\|L^{u_n}(F^*F)P_{u_n,1}\|&= \|L^{u_n}(F)P_{u_n,1}\|^2 = \underset{\|\eta \| = 1}{\sup} \|L^{u_n}(F)\eta \|^2 \notag \\
 &\geq \|L^{u_n}(F)\dfrac{\xi_n}{\| \xi_n \|} \|^2=\frac{1}{16}\lambda_{u_n}(Y)^2\geq a\notag.\qedhere
 \end{align}
 \end{proof}

Now that we have our $a$, choose $l \in \P$ so that $la > M$.  Since $\{u_n\}$ converges $k$
times to $z$ in $\go/G$ for every $k$, there exist $l$ sequences
$\{u_n=\gamma_n^{(1)}\}, \{\gamma_n^{(2)}\}, ...,
\{\gamma_n^{(l)}\}$ satisfying the  two items on page~\pageref{ktimes}.

For each $n$ and $1\leq i \leq l$, we define the subspace
\[\H_{u_n,i} = \H_{u_n} \cap L^2(E_{u_n} \cap j^{-1}(W_0V_0W_0\gamma_n^{(i)}), \sigma_{u_n}).\]
Let $P_{u_n,i}$ be the projection onto $\H_{u_n,i}$ for $1 \leq i
\leq l.$

\begin{lemma}
The $\H_{u_n,i}\ (1\leq i\leq l)$ are invariant under $L^{u_n}(F)$,
and the $\H_{u_n,i}$ are eventually pairwise disjoint.
\end{lemma}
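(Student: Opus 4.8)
The plan is to establish the two assertions separately: the invariance by reducing to the already-known case $i=1$ (the computation on \cite[pages~140--141]{MW92}), and the disjointness by a short compactness argument exploiting that $\gamma_n^{(j)}(\gamma_n^{(i)})^{-1}\to\infty$.

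\emph{Invariance.} The point is that left convolution by $F$ commutes with right translation, and that $\H_{u_n,i}$ is a right translate of the subspace $\H_{r(\gamma_n^{(i)}),1}$, where for a unit $v$ we write $\H_{v,1}:=\H_v\cap L^2(E_v\cap j^{-1}(W_0V_0W_0),\sigma_v)$. Concretely, fix $n$ and $i$, choose a lift $\delta\in E$ of $\gamma_n^{(i)}$ with $s(\delta)=u_n$ (so $r(\delta)=r(\gamma_n^{(i)})$), and set $R_\delta\xi(\gamma):=\xi(\gamma\delta^{-1})$. Then $R_\delta$ is a unitary from $\H_{r(\gamma_n^{(i)})}$ onto $\H_{u_n}$: it preserves $\T$-equivariance since $(t\cdot\gamma)\delta^{-1}=t\cdot(\gamma\delta^{-1})$, and it is isometric by right invariance of $\sigma$. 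A direct computation (the substitution $\alpha\mapsto\delta\alpha$ in the convolution formula \eqref{eq-Lu}) shows $L^{u_n}(F)R_\delta=R_\delta L^{r(\gamma_n^{(i)})}(F)$; this is the unitary equivalence underlying \cite[Lemma~3.2]{MW92}. Since $W_0V_0W_0\gamma_n^{(i)}$ is precisely the right translate by $\gamma_n^{(i)}$ of $\{\eta\in W_0V_0W_0:s(\eta)=r(\gamma_n^{(i)})\}$, we get $R_\delta(\H_{r(\gamma_n^{(i)}),1})=\H_{u_n,i}$. As $r(\gamma_n^{(i)})\to z$, the computation of \cite[pages~140--141]{MW92} applies at the unit $r(\gamma_n^{(i)})$ (for $n$ large, and in fact for all $n$ since $F$ is cut off away from $z$ by $g$) and gives $L^{r(\gamma_n^{(i)})}(F)\H_{r(\gamma_n^{(i)}),1}\subseteq\H_{r(\gamma_n^{(i)}),1}$; conjugating by $R_\delta$ yields $L^{u_n}(F)\H_{u_n,i}\subseteq\H_{u_n,i}$. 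Equivalently, one just re-runs the \cite{MW92} computation verbatim with $W_0V_0W_0$ replaced throughout by $W_0V_0W_0\gamma_n^{(i)}$.

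\emph{Disjointness.} The subspaces $\H_{u_n,i}$ and $\H_{u_n,j}$ with $i\neq j$ are orthogonal as soon as $W_0V_0W_0\gamma_n^{(i)}\cap W_0V_0W_0\gamma_n^{(j)}=\emptyset$. If this failed for infinitely many $n$, there would be $w,w'\in W_0V_0W_0$ with $w\gamma_n^{(i)}=w'\gamma_n^{(j)}$, whence $\gamma_n^{(j)}(\gamma_n^{(i)})^{-1}=w'^{-1}w\in(W_0V_0W_0)^{-1}(W_0V_0W_0)=W_0V_0W_0^2V_0W_0$, using that $W_0$ and $V_0$ are symmetric. Since $W_0$ is conditionally compact and $V_0$ is relatively compact, $W_0V_0W_0^2V_0W_0$ is relatively compact, so $\{\gamma_n^{(j)}(\gamma_n^{(i)})^{-1}\}$ would have a convergent subsequence, contradicting $\gamma_n^{(j)}(\gamma_n^{(i)})^{-1}\to\infty$ (if $j<i$, interchange the roles of $i$ and $j$). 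As there are only finitely many pairs, all these intersections are empty for $n$ large, so the $\H_{u_n,i}$ are eventually pairwise orthogonal.

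I expect the first part to be the main obstacle: one must make sure the Muhly--Williams invariance computation genuinely transplants to the right translate $W_0V_0W_0\gamma_n^{(i)}$, i.e.\ that $R_\delta$ really does intertwine $L^{r(\gamma_n^{(i)})}(F)$ and $L^{u_n}(F)$ while carrying $\H_{r(\gamma_n^{(i)}),1}$ onto $\H_{u_n,i}$. Although conceptually transparent (left convolution versus right translation), verifying this amounts to carefully unwinding the definitions of $\H_{u_n,i}$, of $R_\delta$, and of the convolution \eqref{eq-Lu}, and matching the change of variables $\alpha\mapsto\delta\alpha$ against the left/right invariance of the Haar system $\sigma$; the disjointness part, by contrast, is routine.
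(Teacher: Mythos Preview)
Your disjointness argument is correct and essentially identical to the paper's (in fact slightly cleaner: the paper routes through an auxiliary neighbourhood $V_3$, which is redundant since $W_0V_0W_0\cdot r(\gamma_{n}^{(i)})\subseteq W_0V_0W_0$ already).

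For invariance you offer two routes, and the paper takes the second one: it re-runs the \cite[pp.~140--141]{MW92} support computation directly with $W_0V_0W_0$ replaced by $W_0V_0W_0\gamma_n^{(i)}$, using \eqref{eq-fixed} together with $s(\gamma)=u_n\in A$ to force $g(r(\gamma))=0$ whenever $j(\gamma)\in\overline{W_1}^7\overline{V_1}\,\overline{W_1}^7\gamma_n^{(i)}\setminus W_0V_0W_0\gamma_n^{(i)}$. Your intertwining argument via $R_\delta$ is a genuinely different and more conceptual route, and it is correct: the substitution $\alpha\mapsto\delta\alpha$ and left invariance of the Haar system give $L^{u_n}(F)R_\delta=R_\delta L^{r(\gamma_n^{(i)})}(F)$, and $R_\delta$ visibly carries $\H_{r(\gamma_n^{(i)}),1}$ onto $\H_{u_n,i}$. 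It has the virtue of reducing every $i$ to the case already handled. The small cost is an extra hypothesis: for the \cite{MW92} computation to apply at the unit $r(\gamma_n^{(i)})$ one needs $r(\gamma_n^{(i)})\in A$, which holds only eventually, whereas the paper's direct computation needs only $u_n\in A$. Your parenthetical ``in fact for all $n$ since $F$ is cut off away from $z$ by $g$'' is not justified---the factors $g(r(\gamma))$ and $g(r(\alpha))$ appearing in $F(\gamma\alpha^{-1})$ do not see $s(\alpha)=r(\gamma_n^{(i)})$---but ``for $n$ large'' is all that is used downstream, so this does no harm.
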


\begin{proof}
Fix $1\leq i\leq l$. To see that $\H_{u_n,i}$ is invariant under $L^{u_n}(F)$ it suffices to
show that $\supp(L^{u_n}(F)\psi)\subseteq j^{-1}(W_0V_0W_0 \gamma_n^{(i)})$ for $\psi\in \H_{u_n, i}$
with compact support. Fix $\gamma \in \supp(L^{u_n}(F)\psi)$.  Thus
\[
0\neq L^{u_n}(F)\psi (\gamma) = g(r(\gamma)) \int_G g(r(\alpha))
b(j(\gamma \alpha^{-1}))h(\gamma \alpha^{-1}) \psi(\alpha) \ d\lambda_{u_n} (j(\alpha)).
\]
For the integral to be non-zero, there must exist $\alpha\in s^{-1}(\{u_n\})$ in the support of the integrand.
Then $s(\gamma)=s(\alpha)=u_n$ and, in particular,  $j(\gamma)\in G_{u_n}\subseteq G_A$. Also $j(\alpha)\in W_0V_0W_0\gamma_n^{(i)}$.

Suppose, by way of contradiction, that $j(\gamma)\notin
\overline{W_1}^7\overline{V_1}\,\overline{W_1}^7\gamma_n^{(i)}$. We
have $j(\gamma\alpha^{-1})\in\supp b\subseteq
\overline{W_1}^4\overline{V_1}\,\overline{W_1}^4$. But now
$j(\gamma)=j(\gamma\alpha^{-1})j(\alpha)\subseteq
\overline{W_1}^4\overline{V_1}\, \overline{W_1}^7\gamma_n^{(i)}$,
contradicting that $j(\gamma)\notin
\overline{W_1}^7\overline{V_1}\,\overline{W_1}^7\gamma_n^{(i)}$. So
$j(\gamma)\in\overline{W_1}^7\overline{V_1}\,\overline{W_1}^7\gamma_n^{(i)}$.

Now suppose, again by way of contradiction, that $j(\gamma)\notin W_0V_0W_0\gamma_n^{(i)}$. Then
\[
r(\gamma)=r(j(\gamma))\in r\big(\big(
\overline{W_1}^7\overline{V_1}\,\overline{W_1}^7\gamma_n^{(i)}\setminus
W_0V_0W_0\gamma_n^{(i)} \big)\cap G_A  \big)\subseteq
r^{-1}(G^{(0)}\setminus\supp g)
\]
by \eqref{eq-fixed}. But now $g(r(\gamma))=0$, contradicting that
$L^{u_n}(F)\psi (\gamma)\neq 0$. Thus $j(\gamma)\in
W_0V_0W_0\gamma_n^{(i)}$.  Hence $\H_{u_n,i}$ is invariant under
$L^{u_n}(F)$ for  $1\leq i\leq l$.

Next, suppose that $1\leq i<j\leq l$ and that $\H_{u_n,j}$ and
$\H_{u_n,i}$ are not eventually disjoint. Then
$j^{-1}(W_0V_0W_0\gamma_n^{(j)})$ and
$j^{-1}(W_0V_0W_0\gamma_n^{(i)})$ are not eventually disjoint.
So there exists subsequences $\{\gamma_{n_k}^{(i)}\}$,
$\{\gamma_{n_k}^{(j)}\}$ of $\{\gamma_{n}^{(i)}\}$,
$\{\gamma_{n}^{(j)}\}$, and a sequence  $\{\alpha_k\}\subseteq  G$
such that
\[
\alpha_k\in W_0V_0W_0\gamma_{n_k}^{(i)}\cap  W_0V_0W_0\gamma_{n_k}^{(j)}.
\]
Thus $\alpha_k(\gamma_{n_k}^{(i)})^{-1}\in
W_0V_0W_0r(\gamma_{n_k}^{(i)})\subseteq W_0V_0W_0V_3$ and
$\alpha_k(\gamma_{n_k}^{(j)})^{-1}\in  W_0V_0W_0V_3$ eventually. So
\[
\gamma_{n_k}^{(j)}(\gamma_{n_k}^{(i)})^{-1}=\gamma_{n_k}^{(j)}s(\alpha_k)
(\gamma_{n_k}^{(i)})^{-1}=\gamma_{n_k}^{(j)}(\alpha_k)^{-1}\alpha_k(\gamma_{n_k}^{(i)})^{-1}\in
V_3^{-1}W_0V_0W_0^2V_0W_0V_3
\]
eventually.  But $V^{-1}_3W_0V_0W_0^2V_0W_0V_3$ is
relatively compact, so
$\{\gamma_{n_k}^{(j)}(\gamma_{n_k}^{(i)})^{-1}\}$ has a convergent
subsequence.  But this contradicts the $l$-times convergence of
$\{u_n\}$.
\end{proof}

\begin{lemma}
\label{bound2} Let $F$ be the function defined at  \eqref{eq-F} and
$a > 0$ be as in \lemref{bound}. Suppose  that $u_n\in V_3$ and
$r(\gamma_n^{(i)})\in V_3\cap C$ for $1\leq i\leq l$.  Then
\[
\|L^{u_n}(F^*F)P_{u_n, i}\| \geq a\] for $1 \leq i \leq l$.
\end{lemma}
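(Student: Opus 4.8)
The plan is to run the proof of \lemref{bound} with the base point $u_n$ replaced by $v_n:=r(\gamma_n^{(i)})$, producing a vector supported on a lift of $Yv_n\gamma_n^{(i)}$ instead of on a lift of $Yu_n$, and hence lying in $\H_{u_n,i}$ rather than $\H_{u_n,1}$. The point of the extra hypothesis $r(\gamma_n^{(i)})\in V_3\cap C$ is precisely that it forces $v_n$ to satisfy every constraint imposed on $u_n$ in the proof of \lemref{bound} --- namely $v_n\in V_2\cap C$ and $\lambda_{v_n}(Y)\ge k$ --- while the extra factor $\gamma_n^{(i)}$ on the right cancels in every ``difference'' expression $\gamma\alpha^{-1}$ since $\gamma_n^{(i)}(\gamma_n^{(i)})^{-1}=v_n$ is a unit, so the computation transfers almost verbatim. (An essentially equivalent route: since $[u_n]=[v_n]$, right translation by a lift of $\gamma_n^{(i)}$ is, by \cite[Lemma~3.2]{MW92}, a unitary $\H_{v_n}\to\H_{u_n}$ that intertwines $L^{v_n}$ and $L^{u_n}$ and carries $\H_{v_n,1}$ onto $\H_{u_n,i}$, so one can instead apply \lemref{bound} at the base point $v_n$ and transport the resulting vector.)

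In detail I would proceed as follows. First, define a bounded Borel function $\zeta_n^{(i)}\colon Yv_n\to\T_0$ by the formula defining $\zeta_n$ in the proof of \lemref{bound} with $v_n$ in place of $u_n$; this is legitimate because $v_n\in C$ gives $v_nx^{-1}=x^{-1}\in CY\subseteq j(\O_1)$ for $x\in Yv_n$. Next, set
\[
\xi_n^{(i)}(\gamma)=t(\gamma)\,\zeta_n^{(i)}\bigl(j(\gamma)(\gamma_n^{(i)})^{-1}\bigr)\,\chi_{Yv_n\gamma_n^{(i)}}\bigl(j(\gamma)\bigr),
\]
a bounded Borel function on $E$ supported in $j^{-1}(\overline{Yv_n\gamma_n^{(i)}})$; it lies in $\H_{u_n}$ because $t(s\cdot\gamma)=st(\gamma)$, it lies in $\H_{u_n,i}$ because $Yv_n\gamma_n^{(i)}\subseteq W_0V_0W_0\gamma_n^{(i)}$ (as in the proof of \lemref{bound}, where the analogue $Yu_n\subseteq W_0V_0W_0$ was used), and $\|\xi_n^{(i)}\|^2_{u_n}=\lambda_{u_n}(Yv_n\gamma_n^{(i)})=\lambda_{v_n}(Y)$ by right-invariance of the Haar system. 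Then, fixing a lift $\widetilde\gamma_n^{(i)}$ of $\gamma_n^{(i)}$ and, for $\gamma\in\widetilde Y\,\widetilde\gamma_n^{(i)}$ (with $\widetilde Y$ as in the proof of \lemref{bound}), rerun the chain of integral manipulations on page~\pageref{pg-integrals}: after the substitution $x=j(\alpha)(\gamma_n^{(i)})^{-1}\in Yv_n$ the integrand is supported where $j(\gamma\alpha^{-1})\in YY\subseteq W_0V_0W_0$ (so $b=1$) and where $r(\gamma),r(\alpha)\in r(Yv_n)\subseteq U$ (so both copies of $g$ are $1$, by \eqref{eq-Y} and $v_n\in V_2$), leaving
\[
L^{u_n}(F)\xi_n^{(i)}(\gamma)=\int_{Yv_n} h\bigl(\gamma\,\zeta_n^{(i)}(x)\cdot c(x\gamma_n^{(i)})^{-1}\bigr)\,d\lambda_{v_n}(x);
\]
the point of the construction of $\zeta_n^{(i)}$ is that $\gamma\,\zeta_n^{(i)}(x)\cdot c(x\gamma_n^{(i)})^{-1}\in\widetilde Y\bigl(\widetilde\gamma_n^{(i)}\,\zeta_n^{(i)}(x)\,c(x\gamma_n^{(i)})^{-1}\bigr)\subseteq\widetilde Y\,j^{-1}(CY)\subseteq\O_2$ for every $x\in Yv_n$, so $\mathrm{Re}\,h>\tfrac14$ on the integrand and hence $|L^{u_n}(F)\xi_n^{(i)}(\gamma)|^2\ge\tfrac1{16}\lambda_{v_n}(Y)^2$ for all such $\gamma$. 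Integrating this bound over $Yv_n\gamma_n^{(i)}$, which has $\lambda_{u_n}$-measure $\lambda_{v_n}(Y)$, gives $\|L^{u_n}(F)\xi_n^{(i)}\|^2\ge\tfrac1{16}\lambda_{v_n}(Y)^3$. Finally, since $v_n\in V_3$ we have $\lambda_{v_n}(Y)\ge k$ by \cite[Lemma~3.10(2)]{CaH}, and since each $\H_{u_n,i}$ is invariant under $L^{u_n}(F)$ and $F=F^*$,
\[
\|L^{u_n}(F^*F)P_{u_n,i}\|=\|L^{u_n}(F)P_{u_n,i}\|^2\ge\frac{\|L^{u_n}(F)\xi_n^{(i)}\|^2}{\|\xi_n^{(i)}\|^2}\ge\tfrac1{16}\lambda_{v_n}(Y)^2\ge\tfrac1{16}k^2\ge a.
\]

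The main obstacle is the bookkeeping hidden in the middle step: one must check that every set inclusion used in the proof of \lemref{bound} --- the one forcing $b=1$, the ones forcing the $g$'s to be $1$, and above all the analogue of \eqref{eq-O2} placing the argument of $h$ inside $\O_2$ --- survives right-multiplication of the relevant sets by $\gamma_n^{(i)}$. This goes through precisely because $\gamma_n^{(i)}(\gamma_n^{(i)})^{-1}=v_n$ is absorbed into $Y$, so the difference sets are literally unchanged, and because $v_n=r(\gamma_n^{(i)})\in C$ lets the unit $v_n$ be read as an element of $C$ in the factorisation $x^{-1}=v_n\cdot x^{-1}\in CY$ --- which is exactly the role played by $u_n\in C$ in \lemref{bound}.
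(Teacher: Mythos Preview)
Your proposal is correct and is essentially the paper's own proof: construct a vector in $\H_{u_n,i}$ supported on $j^{-1}(Y\gamma_n^{(i)})$, rerun the integral estimates of Lemma~\ref{bound}, and conclude using $\lambda_{r(\gamma_n^{(i)})}(Y)\ge k$ from $r(\gamma_n^{(i)})\in V_3$. The only cosmetic difference is that the paper parametrizes its phase-correcting map $\zeta_n^i$ directly on $Y\gamma_n^{(i)}$ (via the condition $t\cdot c(\gamma_n^{(i)})c(y)^{-1}\in\O_1$, so no change of variable $x=y(\gamma_n^{(i)})^{-1}$ or separate lift $\tilde\gamma_n^{(i)}$ is needed and the argument of $h$ is $\gamma\,\zeta_n^i(y)\,c(y)^{-1}$) rather than on $Yv_n$ followed by precomposition with right translation.
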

Notice that in  \lemref{bound} above, we proved \lemref{bound2} in
the special case where $i=1$; we needed to do the base case $i=1$ to find the constant $a$. The proof of Lemma~\ref{bound2}  is similar to that of Lemma~\ref{bound}.

\begin{proof}
Let $Y, \O_1,\O_2, c, Y_0, \tilde Y, \T_0$ be as in
Lemma~\ref{lemDP}.  Fix $i$ and $x \in Y\gamma_n^{(i)}$. Then
\[
j(c(\gamma_n^{(i)})c(x)^{-1}) = \gamma_n^{(i)} x^{-1}\in
r(\gamma_n^{(i)})Y \subseteq CY\subseteq CY_0 \subseteq j(\O_1).\]
There exists a $t \in \T_0$ so that $t \cdot
c(\gamma_n^{(i)})c(x)^{-1} \in \O_1.$ Just as we defined $\zeta_n :
Yu_n \to \T_0$ in Lemma~\ref{lemDP} we now define
$\zeta_n^i:Y\gamma_n^{(i)}\to\T$ by $\zeta_n^i(y) = t_j$ where $j =
\min\{k:t_k \cdot c(\gamma_n^{(i)})c(y)^{-1}\in \O_1\}$. Since
$r(\gamma_n^{(i)})\in C$, we have
\[
\tilde Y\zeta_n^i(x)\gamma_n^{(i)}c(x^{-1})\subseteq\tilde Yr(\gamma_n^{(i)})\tilde Y\subseteq \tilde Yj^{-1}(CY)\subseteq\O_2.
\]
Let $\chi^i_n$ be the characteristic function of $Y\gamma_n^{(i)}$ and define
$\xi^i_n:E \to \C$ by
\[
\xi^i_n(\gamma) = t(\gamma)\zeta^i_n(j(\gamma))\chi^i_n(j(\gamma)).
\]
 Since all of the functions involved in defining $\xi^i_n$ are Borel,
so is $\xi^i_n$.
It is clear that $\xi^i_n$ is bounded, $\T$-invariant and has
compact support in $j^{-1}(\overline{Y\gamma_n^{(i)}})$. Since $s(\gamma_n^{(i)})=u_n$ we have
$\supp \xi_n^i\subseteq E_{u_n}\cap j^{-1}(Y\gamma_n^{(i)})\subseteq E_{u_n}\cap j^{-1}(W_0V_0W_0\gamma_n^{(i)}) $. Thus $\xi_n^i\in \H_{{u_n},i}$.

Fix $\gamma\in j^{-1}(Y\gamma_n^{(i)})$.  If $\alpha\in j^{-1}(Y\gamma_n^{(i)})$ then $j(\gamma\alpha^{-1})\in Yr(\gamma_n^{(i)})Y\subseteq W_0V_0W_0$
and hence $b(j(\gamma\alpha^{-1}))=1$.  The support of $\xi_n^i$ is $Y\gamma_n^{(i)}$, so the same calculation as done on page~\pageref{pg-integrals} gives
\begin{align*}
L^{u_n}(F)(\xi^i_n)(\gamma) &= g(r(\gamma)) \int_{Y\gamma_n^{(i)}} g(r(y))h(\gamma c(y)^{-1}) \xi^i_n(c(y)) \ d\lambda_{u_n}(y)
\intertext{which, since $r(\gamma_n^{(i)})\in V_2$  implies $r(Y\gamma_n^{(i)})\subseteq U$ and since $g$ is identically one on $U$, is}
&=\int_{Y\gamma_n^{(i)}} h(\gamma c(y)^{-1}) \xi^i_n(c(y)) \
d\lambda_{u_n}(y)
\\
&= \int_{Y\gamma_n^{(i)}} h(\gamma \zeta^i_n(y) c(y)^{-1}) \
d\lambda_{u_n}(y).
\end{align*}
But $\gamma \zeta^i_n(y) c(y)^{-1}\in\O_2$,  so $\text{Re}(
h(\gamma \zeta^i_n(y) c(y)^{-1})) > \frac{1}{4}$ and
\[
\text{Re}\big(L^{u_n}(F)(\xi^i_n)(\gamma)\big) \geq \dfrac{1}{4} \lambda_{u_n}(Y\gamma_n^{(i)})
=\dfrac{1}{4} \lambda_{r(\gamma_n^{(i)})}(Y).
\]
Since $r(\gamma_n^{(i)})\in V_3\cap C$ by assumption,  the same
calculation as at the end of the proof of Lemma~\ref{lemDP} gives
$\|L^{u_n}(F^*F)P_{u_n,i}\|\geq a$.
\end{proof}

To push $F^*F$ into the Pedersen ideal, let $q \in C_c(0,\infty)$ be
any function satisfying

 \begin{equation*}
q(t)=
\begin{cases} 0, & \text{if $t < \frac{a}{3}$,}
\\
2t - \dfrac{2a}{3}, &\text{if $ \frac{a}{3} \leq t < \frac{2a}{3}$,}
\\
t, &\text{if $\frac{2a}{3} \leq t \leq \|F^*F\|$.}
\end{cases}
\end{equation*}
Set $d := q(F^*F)$. We will show that $\tr(L^{u_n}(d)) \geq la>M$ eventually.

Fix $u_n\in V_3$ such that $r(\gamma_n^{(i)})\in V_3\cap C$ for $1\leq i\leq l$.
By  \lemref{bound2}, each $L^{u_n}(F^*F)P_{u_n,i}$ is
positive with an eigenvalue at least as large as $a$, and by choice of $q$ each
$q(L^{u_n}(F^*F)P_{u_n,i})$ is positive with norm at least as
large as $a$.

We claim that
\begin{equation*} \label{eq6} q(L^{u_n}(F^*F)P_{u_n,i})
= q(L^{u_n}(F^*F))P_{u_n,i}.
\end{equation*}
To see the claim, let $p$ be a polynomial that vanishes at $0$.
Since $L^{u_n}(F^*F)$ leaves $\H_{u_n,i}$ invariant, $L^{u_n}(F^*F)$
and $P_{u_n,i}$ commute. If we plug the operator
$L^{u_n}(F^*F)P_{u_n,i}$ into $p$ and simplify, we see that
$p(L^{u_n}(F^*F)P_{u_n,i}) = p(L^{u_n}(F^*F))P_{u_n,i}$. Because $q$
can be uniformly approximated by polynomials $p$, each vanishing at
0, the claim follows.
Now
\begin{align*}
\|L^{u_n}(d)\|=
\|(L^{u_n}(q(F^*F))\|
&\geq \sum_{i=1}^{l}
\|L^{u_n}(q(F^*F))P_{u_n,i}\|
= \sum_{i=1}^{l} \|q(L^{u_n}(F^*F))P_{u_n,i}\| \\
&=\sum_{i=1}^{l} \|q(L^{u_n}(F^*F)P_{u_n,i})\|\geq la>M
\end{align*}
by the choice of $l$.  Thus $\tr(L^{u_n}(d))>M$, and
since $M$ was arbitrary $d$ is not a bounded-trace element.
But $d$ is an element of the Pedersen ideal of
$\cs(E;G, \lambda)$,  so $\cs(E;G,\lambda )$ does not have bounded
trace. This completes the proof of Theorem~\ref{thm-hard}.


\section{The twisted groupoid $\cs$-algebras that are Fell Algebras}

Recall from \cite{MW90} that a groupoid $G$ is \emph{proper} if the
map $\pi:G\to \go\times\go$, defined by $\pi(\gamma)=(r(\gamma),
s(\gamma))$ for $\gamma\in G$, is a proper map. A subset $U$ of
$\go$ is \emph{wandering} if $G|_{U}=\pi^{-1}(U\times U)$
  is relatively compact.  Thus $G$ is proper if and only if every compact
  subset of $\go$ is wandering.  A groupoid $G$ where each unit has a wandering
  neighbourhood is called \emph{Cartan} \cite[Definition~7.3]{C}.

The following lemma illustrates the relationship between a Cartan
groupoid and 2-times convergence in the orbit space of the groupoid;
it is similar to one direction of \cite[Lemma~2.3]{AaH2}.
Lemma~\ref{lem-2times} will be used  in Example~\ref{ex-1} below.

\begin{lemma}\label{lem-2times}Let $G$ be a topological groupoid.
If there exists a sequence $\{u_n\}\subseteq \go$ which
converges $2$-times in $\go/G$ to $z\in\go$, then $G$ is not
Cartan.
\end{lemma}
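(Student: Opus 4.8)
The plan is to argue by contraposition: assuming $G$ is Cartan, I will show that no sequence can converge $2$-times in $\go/G$ to any point $z$. So suppose $G$ is Cartan and suppose, for contradiction, that $\{u_n\}\subseteq\go$ converges $2$-times in $\go/G$ to $z\in\go$. By the definition of $2$-times convergence (the two items on page~\pageref{ktimes}, with $k=2$), there are sequences $\{\gamma_n^{(1)}\},\{\gamma_n^{(2)}\}\subseteq G$ with $r(\gamma_n^{(i)})\to z$ and $s(\gamma_n^{(i)})=u_n$ for $i=1,2$, and with $\gamma_n^{(2)}(\gamma_n^{(1)})^{-1}\to\infty$, i.e. $\{\gamma_n^{(2)}(\gamma_n^{(1)})^{-1}\}$ has no convergent subsequence.

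Since $G$ is Cartan, $z$ has a wandering neighbourhood; that is, there is an open neighbourhood $U$ of $z$ in $\go$ such that $G|_U=\pi^{-1}(U\times U)$ is relatively compact. Because $r(\gamma_n^{(1)})\to z$ and $r(\gamma_n^{(2)})\to z$, we have $r(\gamma_n^{(i)})\in U$ for all large $n$; also $s(\gamma_n^{(1)})=s(\gamma_n^{(2)})=u_n$, and since $u_n\to z$ we have $u_n\in U$ eventually as well. Hence both $\gamma_n^{(1)}$ and $\gamma_n^{(2)}$ lie in $G|_U$ eventually, so the product $\eta_n:=\gamma_n^{(2)}(\gamma_n^{(1)})^{-1}$ is defined (its source is $r(\gamma_n^{(1)})$, its range is $r(\gamma_n^{(2)})$, both in $U$) and $\eta_n\in\pi^{-1}(U\times U)=G|_U$ eventually. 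Since $G|_U$ is relatively compact, $\{\eta_n\}$ lies in a compact subset of $G$ for $n$ large, and therefore has a convergent subsequence. This contradicts the requirement that $\gamma_n^{(2)}(\gamma_n^{(1)})^{-1}\to\infty$. Therefore no such sequence exists when $G$ is Cartan, and the contrapositive gives the lemma.

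The only point requiring a little care — the "main obstacle", such as it is — is checking that $\eta_n$ actually lands in $G|_U$ rather than merely having composable factors: one needs $r(\eta_n)=r(\gamma_n^{(2)})\in U$ and $s(\eta_n)=s((\gamma_n^{(1)})^{-1})=r(\gamma_n^{(1)})\in U$, which is exactly what the convergences $r(\gamma_n^{(i)})\to z\in U$ supply for large $n$, together with the fact that $U$ is open. Everything else is immediate from the definitions of Cartan and of $k$-times convergence; no use of the $\T$-groupoid structure, Haar systems, or the $C^*$-algebraic machinery is needed.
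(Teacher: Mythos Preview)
Your proof is correct and follows essentially the same approach as the paper's: both argue by contradiction, take a wandering neighbourhood $U$ of $z$, and obtain a contradiction by showing that $\gamma_n^{(2)}(\gamma_n^{(1)})^{-1}$ eventually lies in a relatively compact set. The only cosmetic difference is that the paper places $\gamma_n^{(2)}(\gamma_n^{(1)})^{-1}$ in $\overline{G|_U\,G|_U}$ (via $\gamma_n^{(i)}\in G|_U$), whereas you observe directly that $r(\eta_n)=r(\gamma_n^{(2)})\in U$ and $s(\eta_n)=r(\gamma_n^{(1)})\in U$, so $\eta_n\in G|_U$ itself; your route is slightly cleaner and in fact does not even require the hypothesis $u_n\to z$ that you invoke along the way.
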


\begin{proof}
We argue by contradiction. Suppose that $\{u_n\}\subseteq \go$
converges $2$-times in $\go/G$ to $z$ and that $G$ is Cartan.  Let
$U$ be a wandering neighbourhood of $z$ in $\go$, so that $G|_U$ is
relatively compact.   There exist sequences $ \{ \gamma_n^{(1)}\},\
\{\gamma_n^{(2)}\}\subseteq G $ such that $r(\gamma_n^{(i)})\to z$,
$s(\gamma_n^{(i)})=u_n$ for $i=1,2$ and
$\gamma_n^{(2)}(\gamma_n^{(1)})^{-1}\to\infty$ as $n\to\infty$. Thus
$\gamma_n^{(2)}, \gamma_n^{(1)}\in G|_U$ eventually, and hence
$\gamma_n^{(2)}(\gamma_n^{(1)})^{-1}\in \overline{G|_UG|_U}$,
eventually. But $\overline{G|_U G|_U}$ is compact, contradicting
that $\gamma_n^{(2)}(\gamma_n^{(1)})^{-1}\to\infty$ as $n\to\infty$.

\end{proof}

\begin{thm}
\label{thm-Fell}
Let $E$ be a second-countable, locally compact, Hausdorff, $\T$-groupoid such that $G:=E/\T$
 is a principal groupoid with Haar system $\lambda$. The following are equivalent:
 \begin{enumerate}
 \item the twisted groupoid $C^*$-algebra $\cs(E;G,\lambda)$ is a Fell algebra;
 \item $G$ is Cartan;
 \item $C^*(G)$ is a Fell algebra.
 \end{enumerate}
\end{thm}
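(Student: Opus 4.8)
The plan is to mirror the logic used in Theorems~\ref{thm-hard} for bounded trace, replacing ``integrable'' with ``Cartan'' and ``bounded trace'' with ``Fell algebra''. Since $G$ is principal, the equivalence of (2) and (3) should already be available from \cite[Theorem~7.9]{C} (the Cartan characterization of when a principal groupoid $C^*$-algebra is a Fell algebra), so the real content is proving (1)$\iff$(2), and this splits into the two implications below.

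For (2)$\Rightarrow$(1): assume $G$ is Cartan. A Cartan groupoid is proper when restricted to a suitable neighbourhood of each unit, so in particular every unit has a wandering neighbourhood and $G^{(0)}/G$ is locally Hausdorff; in any case it is $T_0$, so by Theorem~\ref{MWthm-better} the map $[u]\mapsto[L^u]$ is a homeomorphism of $G^{(0)}/G$ onto $\cs(E;G,\lambda)^\wedge$. To verify Fell's condition at a given $[L^z]$, I would pick a wandering neighbourhood $U$ of $z$, pass via Lemma~\ref{lem-exact} to the ideal $I\cong\cs(E|_U;G|_U,\lambda)$ corresponding to the open saturated set $q^{-1}(q(U))$ (or work directly on $G|_U$, which is proper), and use the continuous-trace machinery of \cite[Theorem~4.3]{MW92}: a proper principal groupoid gives a twisted groupoid $C^*$-algebra with continuous trace, hence in particular a Fell algebra, so Fell's condition holds locally on $\hat I$, which is an open neighbourhood of $[L^z]$ in the full spectrum. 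Assembling these local statements gives that every irreducible representation of $\cs(E;G,\lambda)$ satisfies Fell's condition.

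For (1)$\Rightarrow$(2) I would again prove the contrapositive. If $G$ is not Cartan then some unit $z$ has no wandering neighbourhood; Lemma~\ref{lem-2times} (used in the contrapositive form, together with the $k$-times convergence analysis of \cite[Proposition~3.11]{CaH} adapted to the Cartan setting) produces a sequence $\{u_n\}\to z$ in $G^{(0)}$ that converges $2$-times in $G^{(0)}/G$ to $z$ but with $[u_n]\neq[z]$. Now I would reuse the Muhly--Williams function $F$ from \eqref{eq-F} exactly as in the proof of Theorem~\ref{thm-hard}, but only need the case $l=2$: Lemma~\ref{lemDP} and Lemma~\ref{bound2} give a constant $a>0$ with $\|L^{u_n}(F^*F)P_{u_n,i}\|\geq a$ for $i=1,2$ eventually, and the two projections are eventually orthogonal. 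Pushing $F^*F$ into the Pedersen ideal via the same cutoff $q$ yields $d=q(F^*F)$ with $\operatorname{rank}L^{u_n}(d)\geq 2$ eventually, while along $[L^z]$ one shows the rank can be made $1$ (or directly that no single positive element of $\cs(E;G,\lambda)$ can be rank-one on a whole neighbourhood of $[L^z]$, because such elements would force the $l$-summand estimate to collapse). Hence $[L^z]$ fails Fell's condition and $\cs(E;G,\lambda)$ is not a Fell algebra.

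The main obstacle I anticipate is the (1)$\Rightarrow$(2) direction: turning ``not Cartan'' into genuine $2$-times convergence requires care, since failure of the wandering property at $z$ gives a sequence in $G|_U$ with no convergent subsequence for every neighbourhood $U$, and one must extract from this the two sequences $\{\gamma_n^{(1)}\},\{\gamma_n^{(2)}\}$ with $r(\gamma_n^{(i)})\to z$, $s(\gamma_n^{(i)})=u_n$, and $\gamma_n^{(2)}(\gamma_n^{(1)})^{-1}\to\infty$ — essentially the converse of Lemma~\ref{lem-2times}, which the excerpt only proves in one direction. The cleanest route is probably to invoke the analogue of \cite[Lemma~2.3]{AaH2} or to prove directly that a principal groupoid which is not Cartan admits such a sequence, and then the rest of the argument is a routine specialization of the bounded-trace proof to $l=2$.
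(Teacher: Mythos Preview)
Your handling of (2)$\iff$(3) and of (2)$\Rightarrow$(1) matches the paper's. The divergence is entirely in (1)$\Rightarrow$(2), where the paper avoids the contrapositive and gives a much shorter direct argument.

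The paper argues as follows: if $\cs(E;G,\lambda)$ is Fell then it is liminal, so $G^{(0)}/G$ is $T_1$ (Proposition~\ref{prop-orbits}) and $\Psi$ is a homeomorphism. Fix $u\in G^{(0)}$. By \cite[Corollary~3.4]{AS}, $[L^u]$ has an open \emph{Hausdorff} neighbourhood $O$ in the spectrum. Pull $O$ back to an open saturated $U\subseteq G^{(0)}$; the ideal $I\cong\cs(E|_U;G|_U,\lambda)$ then has Hausdorff spectrum and is Fell, hence has continuous trace. Now \cite[Theorem~4.3]{MW90} gives that $G|_U$ is proper, so any relatively compact neighbourhood of $u$ contained in $U$ is wandering. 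Thus $G$ is Cartan.

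Your contrapositive route has two genuine gaps beyond the missing converse of Lemma~\ref{lem-2times} (which you correctly flag). First, the Pedersen-ideal device from Theorem~\ref{thm-hard} does not transfer: bounded trace is an ideal-density statement, so exhibiting one non-bounded-trace element in the minimal dense ideal is decisive; Fell's condition, by contrast, is existential at each point of the spectrum, and showing that \emph{your} $d$ satisfies $\operatorname{rank}L^{u_n}(d)\geq 2$ says nothing about whether some \emph{other} positive $a$ might be rank-one on a neighbourhood of $[L^z]$. Second, your fallback claim (``no single positive element can be rank-one near $[L^z]$, because such elements would force the $l$-summand estimate to collapse'') is unsupported: the decomposition $\H_{u_n}=\bigoplus_i\H_{u_n,i}$ is tailored to $F$, and there is no reason $L^{u_n}(a)$ for an arbitrary $a$ should respect it. A genuine $2$-times-convergence proof that Fell's condition fails would need the lower-multiplicity machinery of \cite{AaH2} or \cite{HaH}; the paper's route via \cite{AS} and the continuous-trace theorems of \cite{MW90} sidesteps all of this.
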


\begin{proof} Since $G$ is principal, (2) and (3) are equivalent by
\cite[Theorem~7.9]{C}; we will now prove the equivalence of (1) and
(2).

Suppose  that $G$ is Cartan.  Fix an irreducible representation
$\rho$ of $\cs(E;G,\lambda)$;  we will show that $\rho$ satisfies
Fell's condition.  Since $G$ is Cartan, $\go/G$ is $T_1$ by
\cite[Lemma~7.4]{C}. So by Proposition~\ref{MWthm}, $\rho$ is
unitarily equivalent  to $L^u(E;G)$ for some $u\in\go$.  It suffices
to show  $L^u(E;G)$ satisfies Fell's condition.  Let $U_0$ be a
wandering neighbourhood of $u$ in $\go$ and $U=r(s^{-1}(U_0))$ its
saturation. Since $G$ has a Haar system, $r$ is open and hence $U$
is open. By \cite[Lemma~7.8]{C}, $G|_U$ is a proper groupoid, so by
\cite[Theorem~4.2]{MW90}, $C^*(E|_U; G|_U,\lambda)$ has continuous
trace.  By Lemma~\ref{lem-exact}, the inclusion $k:C_c(E|_U;
G|_U)\to C_c(E; G)$ induces an isometric isomorphism $k$ of
$C^*(E|_U; G|_U,\lambda)$ onto an ideal $I$ of $C^*(E; G,\lambda)$.
Thus $I$ has continuous trace and $L^u(E;G)|_I=L^u(E|_U;G|_u)\circ
k^{-1}$. Since $I$ has continuous trace, $L^u(E;G)|_I$ satisfies
Fell's condition in $\hat{I}$, and hence $L^u(E;G)$ satisfies Fell's
condition in $C^*(E; G,\lambda)^\wedge$. Thus $\rho$ satisfies
Fell's condition in $C^*(E; G,\lambda)^\wedge$ as well and $C^*(E;
G,\lambda)$ is a Fell algebra.

Conversely, suppose that $C^*(E; G,\lambda)$ is a Fell algebra.  Fix
$u\in\go$; we will show that $u$ has a wandering neighbourhood in
$\go$. Since $C^*(E; G,\lambda)$ is liminal, $\go/G$ is $T_1$ by
Proposition~\ref{prop-orbits} and $L:\go/G\to C^*(E;
G,\lambda)^\wedge$, $[u]\mapsto[L^u]$ is a homeomorphism by
Proposition~\ref{MWthm}.  By \cite[Corollary~3.4]{AS}, $[L^u]$ has
an open Hausdorff neighbourhood $O$ in $C^*(E; G,\lambda)^\wedge$.
Let $q:\go\to\go/G$ the quotient map and set $U=q^{-1}(L^{-1}(O))$.
Then $U$ is an open saturated subset of $\go$ and $C^*(E|_U;
G|_U,\lambda)$ is isomorphic to an  ideal $I$ of $C^*(E; G,\lambda)$
with spectrum $O$. Thus $C^*(E|_U; G|_U,\lambda)$ has continuous
trace (because $I$ has) and hence $G|_U$ is a proper groupoid by
\cite[Theorem~4.3]{MW90}.  So any relatively compact neighbourhood
contained in $U$ is a wandering neighbourhood of $u$ in $\go$. Thus
$G$ is Cartan.
\end{proof}


\section{Groupoids with abelian isotropy groups.}
Throughout this section  $\G$ is a second-countable, locally
compact, Hausdorff groupoid with Haar system $\lambda$. The
change in notation from $G$ to $\G$ is to emphasize that we are no
longer assuming that the groupoid $\G$ is principal. Let $A_u = \{
\gamma \in \G : r(\gamma) = s(\gamma) = u\}$ be the isotropy group
at $u\in \G^{(0)}$ and let $\cA = \{\gamma \in \G : r(\gamma) =
s(\gamma)\}$ be the isotropy groupoid; we also assume throughout
this section that the isotropy groups are abelian and vary
continuously, that is, that the map $u \mapsto A_u$ from $\G^{(0)}$
to the space of closed subsets of $\G^{(0)}$, is continuous in the
Fell topology. The isotropy groupoid acts on the left and right of
$\G$ and the quotient $\cR:=\G/\cA$ is a principal groupoid. The
main results of this section, Theorems~\ref{thm-end}
and~\ref{thm-Cartan2}, say that $C^*(\G,\lambda)$ has bounded trace
if and only if $\cR$ is an integrable groupoid, and that
$C^*(\G,\lambda)$ is a Fell algebra if and only if $\cR$ is a Cartan
groupoid.  Once again, our proofs are modeled after the analogous
result \cite[Theorem~1.1]{MRW96} for groupoid $C^*$-algebras with
continuous trace.

Since the isotropy groups vary continuously, $\cA$ has a Haar system
$\beta$ \cite[Lemmas~1.1 and 1.2]{renault-ideal}. Write $\hat\cA$
for the spectrum of $C^*(\cA,\beta)$. Then $\cR$ acts on the right
of $\hat \cA$ (see \eqref{eqhatA} and \eqref{eqRaction} below). In
\cite{MRW96} Muhly, Renault, and Williams show that if $\hat
\cA/\cR$ is Hausdorff, then  $\cs(\G, \lambda)$ is isomorphic to a
particular twisted groupoid $\cs$-algebra
\cite[Proposition~4.5]{MRW96}. They then apply the characterization
of when  twisted groupoid $C^*$-algebras have continuous trace from
\cite{MW92} to prove \cite[Theorem~1.1]{MRW96}.

Our strategy is similar.  We prove in Lemma~\ref{t1} that
$\G^{(0)}/\G$ is $T_1$ if and only  if $\hat{\cA}/\cR$ is $T_1$.
This allows us to show that the isomorphism of
\cite[Proposition~4.5]{MRW96} holds even if $\hat \cA/\cR$ is only
$T_1$.   Then we use the isomorphism and  our characterizations in
Theorems~\ref{thm-hard} and~\ref{thm-Fell} of when  twisted groupoid
$C^*$-algebras  have bounded trace or are Fell algebras to get
results for  $C^*(\G,\lambda)$.

We need some background before we can proceed to Lemma~\ref{t1}.
Since $C^*(\cA,\beta)$ is a separable commutative $C^*$-algebra, the
discussion on \cite[p.~3630]{MRW96} shows that
\begin{equation}\label{eqhatA}
\hat \cA=\{(\chi, u):u\in \G^{(0)}, \chi\in\hat A_u\}
\end{equation}
where $(\chi, u)(f)=\int_{A_u}\chi(a)f(a)\, d\beta^u(a)$ for $f\in
C_c(\cA)$. Proposition~3.3 of \cite{MRW96}  describes criteria for
convergence in $\hat \cA$: $(\chi_n, u_n)\to (\chi, u)$ in $\hat
\cA$ if and only if (1) $u_n\to u$ in $\cA^{(0)} (=\G^{(0)})
$, and (2) if $a_n\in A_{u_n}$,  $a\in A_u$  and $a_n\to a$ in
$\cA$, then $\chi_n(a_n)\to\chi(a)$.

If $\chi\in\hat A_u$ and $\gamma\in \G$ with $r(\gamma)=u$, then
$\chi\cdot\gamma$ is the character  of $A_{s(\gamma)}$ defined by
$\chi\cdot\gamma(a)=\chi(\gamma^{-1}a\gamma)$. Note that
$\chi\cdot\gamma$ depends only on $\dot\gamma$.  There is a groupoid
action  of $\cR$ (and $\G$) on the
right of $\hat{\cA}$ via
\begin{equation}\label{eqRaction}
(\chi, u)\cdot \dot{\gamma} = (\chi \cdot \gamma, s(\gamma))
\end{equation}
for $\gamma\in\G$ with $r(\gamma)=u$.

\begin{lemma}
\label{t1} Suppose that $\G$ is a second-countable,  locally
compact, Hausdorff groupoid with  Haar system. Also assume that the
isotropy groups are abelian and vary continuously. Then
$\G^{(0)}/\G$ is $T_1$ if and only if $\hat{\cA}/\cR$ is $T_1$.
\end{lemma}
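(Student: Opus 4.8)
The plan is to characterise $T_1$-ness of each orbit space by closedness of orbits, and then to compare the two orbit structures through the continuous, $\cR$-equivariant surjection $p\colon\hat\cA\to\G^{(0)}$, $(\chi,u)\mapsto u$. Since a quotient carries the quotient topology, $\G^{(0)}/\G$ is $T_1$ exactly when every orbit $[u]$ is closed in $\G^{(0)}$, and $\hat\cA/\cR$ is $T_1$ exactly when every $\cR$-orbit $(\chi,u)\cdot\cR$ is closed in $\hat\cA$. The key auxiliary map is the trivial-character section $\iota\colon\G^{(0)}\to\hat\cA$, $u\mapsto(\mathbf 1_u,u)$, where $\mathbf 1_u$ is the trivial character of $A_u$; since $\mathbf 1_u\cdot\gamma=\mathbf 1_{s(\gamma)}$ this map is $\cR$-equivariant, and the convergence criterion for $\hat\cA$ recalled just above (together with $p\circ\iota=\id$) shows it is a homeomorphism onto its image $Z:=\iota(\G^{(0)})$.

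For the implication ``$\hat\cA/\cR$ is $T_1$ $\Rightarrow$ $\G^{(0)}/\G$ is $T_1$'', I would first show $Z$ is closed in $\hat\cA$. If $(\chi,u)\in\overline Z$, pick $(\mathbf 1_{u_n},u_n)\to(\chi,u)$; then $u_n\to u$, and for any $a\in A_u$ the Fell-continuity of $v\mapsto A_v$ produces $a_n\in A_{u_n}$ with $a_n\to a$, whence the convergence criterion gives $\chi(a)=\lim_n\mathbf 1_{u_n}(a_n)=1$; so $\chi=\mathbf 1_u$ and $(\chi,u)\in Z$. Now $Z$ is closed and $\cR$-invariant, so its image in $\hat\cA/\cR$ is closed (its preimage under the quotient map is $Z$), and $\iota$ descends to a homeomorphism of $\G^{(0)}/\G$ onto that closed subspace, the inverse being induced by the continuous equivariant map $p|_Z$. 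Since a subspace of a $T_1$ space is $T_1$, it follows that $\G^{(0)}/\G$ is $T_1$.

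For the converse, assume every orbit $[u]$ is closed and fix $(\chi,u)\in\hat\cA$; the goal is to show $(\chi,u)\cdot\cR$ is closed. Given $(\psi,v)$ in its closure, second countability of $\hat\cA$ yields a sequence $(\chi\cdot\gamma_n,s(\gamma_n))\to(\psi,v)$ with $r(\gamma_n)=u$; then $s(\gamma_n)\to v$, so $v\in\overline{[u]}=[u]$ and there is $\gamma_0\in\G$ with $r(\gamma_0)=u$, $s(\gamma_0)=v$. Because $A_v$ is abelian, $\chi\cdot\gamma_0$ does not depend on the choice of such $\gamma_0$, and $(\psi,v)$ lies in the orbit precisely when $\psi=\chi\cdot\gamma_0$, so everything reduces to proving this equality. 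I expect this to be the main obstacle: the $\gamma_n$ need not have a convergent subsequence, so one cannot simply pass to a limit of the $\chi\cdot\gamma_n$ inside $\G$. Instead one must read $\psi$ off from the convergence criterion in $\hat\cA$ directly — the convergence of $(\chi\cdot\gamma_n,s(\gamma_n))$ already prevents the characters $\chi\cdot\gamma_n$ from escaping to infinity — and combine this with the Fell-continuity of the isotropy to show $\psi(b)=\chi(\gamma_0^{-1}b\gamma_0)=\chi\cdot\gamma_0(b)$ for every $b\in A_v$, for instance by feeding suitably chosen $b_n\in A_{s(\gamma_n)}$ into the criterion and controlling the values $\chi\cdot\gamma_n(b_n)=\chi(\gamma_n^{-1}b_n\gamma_n)$. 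This last step is the non-Hausdorff counterpart of the computation behind \cite[Proposition~4.5]{MRW96}, and it is precisely here that the hypothesis that the isotropy groups vary continuously in the Fell topology is indispensable.
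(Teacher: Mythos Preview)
Your easy direction (``$\hat\cA/\cR$ $T_1$ $\Rightarrow$ $\G^{(0)}/\G$ $T_1$'') is fine and matches the paper's argument; in fact you do more work than necessary, since a continuous injection into a $T_1$ space already forces the domain to be $T_1$ --- you do not need $Z$ to be closed.

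In the hard direction you correctly locate the obstacle but do not overcome it. You have $(\chi\cdot\gamma_n,s(\gamma_n))\to(\psi,v)$ and want $\psi=\chi\cdot\gamma_0$. Your plan is to feed $b_n\to b$ into the convergence criterion and ``control'' $\chi(\gamma_n b_n\gamma_n^{-1})$; but without any convergence of the $\gamma_n$ themselves there is no mechanism forcing $\gamma_n b_n\gamma_n^{-1}\in A_u$ to approach $\gamma_0 b\gamma_0^{-1}$, and the convergence of the characters in $\hat\cA$ does not by itself give this (the elements $\gamma_n b_n\gamma_n^{-1}$ can wander in the possibly non-compact group $A_u$). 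The Fell-continuity of $u\mapsto A_u$ lets you choose $b_n\to b$, but it says nothing about conjugates by unbounded $\gamma_n$.

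The missing idea, which the paper supplies, is a Mackey--Glimm--Ramsay-type fact: restricting to the closed orbit $[u]$, the groupoid $\G|_{[u]}$ is transitive, and for such groupoids the map $\pi\colon\G|_{[u]}\to[u]\times[u]$, $\gamma\mapsto(r(\gamma),s(\gamma))$, is \emph{open} (\cite[Theorems~2.2A and 2.2B]{MRW96}). Openness of $\pi$ lets you lift the convergence $(u,s(\gamma_n))\to(u,v)$ to a convergent sequence $\eta_k\to\gamma_0$ in $\G|_{[u]}$ with $\pi(\eta_k)=\pi(\gamma_{n_k})$; since $\eta_k$ and $\gamma_{n_k}$ lie in the same $\cA$-coset, $\chi\cdot\gamma_{n_k}=\chi\cdot\eta_k$, and now continuity of multiplication gives $\chi\cdot\eta_k(b_k)\to\chi\cdot\gamma_0(b)$. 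Hausdorffness of $\hat\cA$ then yields $\psi=\chi\cdot\gamma_0$. This replacement of the divergent $\gamma_n$ by a convergent $\eta_k$ in the same coset is precisely the step your sketch is missing.
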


\begin{proof} First suppose that $\G^{(0)}/\G$ is $T_1$.
Fix $(\rho, v)\in\Hat\cA$. It suffices to show that $[(\rho, v)]$ is
closed.  Let $(\chi_n,u_n)\in [(\rho, v)]$ and suppose that
$(\chi_n, u_n)\to (\chi, u)$ in $\hat \cA$.  Thus there exists
$\gamma\in \G$ with $s(\gamma)=u$ and $r(\gamma)=v$, and, for each
$n$, there exists $\gamma_n\in\G$ with $s(\gamma_n)=u_n$,
$r(\gamma_n)=v$ such that
$(\chi_n,u_n)=(\rho\cdot\gamma_n,u_n)=(\rho,v)\cdot\dot{\gamma_n}$.
Note  $u\in [v]$ since $u_n\in [v]$ and $[v]$ is closed by
assumption, and that $\gamma,\gamma_n\in \G|_{[v]}$.

Since $\G$ has a Haar system, $r$ and $s$ are open maps
\cite[Proposition~2.4]{renault} and this puts us in the setting of
\cite{MRW96}. Since $\G|_{[v]}$ is a transitive groupoid, the map
$\pi:\G|_{[v]}\to [v]\times[v]$,
$\pi(\alpha)=(r(\alpha),s(\alpha))$ is open by \cite[Theorems~2.2A
and 2.2B]{MRW96}.  Since $\pi(\gamma_n)=(v, u_n)\to
(v,u)=\pi(\gamma)$ and $\pi$ is open, there exists a subsequence
$\{\gamma_{n_k}\}$ and a sequence $\{\eta_k\}\subseteq\G$ such that
$\pi(\gamma_{n_k})=\pi(\eta_k)$ and $\eta_k\to\gamma$ in $\G|_{[v]}$
(see, for example, \cite[Proposition~1.15]{tfb2}). Thus
$\eta_k\to\gamma$ in $\G$ as well. Note that
$\dot\gamma_{n_k}=\dot\eta_k$.

Fix a sequence $\{a_k\}$ with $a_k\in A_{u_{n_k}}$ such that $a_k\to
a$ in $\hat\cA$. Then by the continuity  of multiplication,
\[
\chi_k(a_k)=(\rho\cdot\dot\gamma_{n_k})(a_k)=\rho(\eta_k^{-1}a_k\eta_k)\to\rho(\gamma^{-1}a\gamma)=(\rho\cdot\dot\gamma)(a).
\]
Thus $\{(\chi_{n_k},u_{n_k})\}$ converges to both $(\chi,u)$ and $(\rho\cdot\dot\gamma,u)$ in $\hat\cA$.  Since $\hat\cA$ is Hausdorff we have
\[
(\chi,u)=(\rho\cdot\dot\gamma, u)=(\rho,v)\cdot\dot\gamma\in[(\rho,v)].
\]
So $[(\rho,v)]$ is closed.  Hence $\hat\cA/\cR$ is $T_1$.

For the converse, first consider $\phi:\G^{(0)}\to\hat\cA/\cR$
defined by $\phi(u)=[(1_u,u)]$, where $1_u$ is the  trivial
character $a\mapsto 1$ for $a\in A_u$. If $u_n\to u$ in $\G^{(0)}$
then, using the convergence criteria for sequences in $\hat\cA$ of
\cite[Proposition~3.3]{MRW96}, it is clear that  $(1_{u_n},u_n)\to
(1_u,u)$ in $\hat \cA$, so that $\phi$ is continuous. Now suppose
that $\phi(u)=\phi(v)$. Then there exists $\gamma\in\G$ with
$r(\gamma)=u$ such that $(1_u,u)\cdot\dot\gamma=(1_v,v)$.  Thus
$v=s(\gamma)$ and hence $[u]=[v]$.  So $\phi$ induces a continuous
injection $\phi:\G^{(0)}/\G\to\hat\cA/\cR$. It follows that
$\G^{(0)}/\G$ is $T_1$ if $\hat\cA/\cR$ is.
\end{proof}

In \cite{MRW96}, Muhly, Renault, and Williams define a groupoid
$\hat \cA\rtimes\cR$ as follows. As a set $\hat
\cA\rtimes\cR=\{(\chi,r(\gamma),\dot\gamma)\in\hat\cA\times\cR\}$,
but an element $(\chi,r(\gamma),\dot\gamma)$ is  abbreviated to just
$(\chi,\dot\gamma)$; the topology on  $\hat \cA\rtimes\cR$ is the
product topology. The unit space is $\hat \cA$ with  range and
source maps
\[
r((\chi,\dot\gamma))=(\chi, r(\gamma))\text{\ and\ }s((\chi,\dot\gamma))=(\chi\cdot\gamma, s(\gamma)).
\]
The multiplication and inverse in $\hat \cA\rtimes\cR$ is given by
\[
(\chi,\dot\gamma)(\chi\cdot\dot\gamma,\dot\alpha)=(\chi,\dot\gamma\dot\alpha)\text{\ and\ }(\chi,\dot\gamma)^{-1}=(\chi\cdot\dot\gamma,\dot\gamma^{-1}).
\]
It is straightforward to see that $\hat \cA\rtimes\cR$ is principal.
Note that  $\cR$ is proper if and only if
$\hat{\cA} \rtimes \cR$ is proper; similarly one is Cartan or integrable if and only if the other is:

\begin{lemma}
\label{intR} Suppose that $\G$ is a second-countable,  locally
compact, Hausdorff groupoid with abelian isotropy.  Also assume that
the isotropy groupoid $\cA$ has a Haar system.
\begin{enumerate}
\item If $\G$ has a  Haar system, then $\cR$ and  $\hat{\cA} \rtimes \cR$ have Haar systems  $\alpha$ and $\delta\times\alpha$, respectively;
and with respect to these Haar systems, $\cR$ is integrable if and only if $\hat{\cA} \rtimes \cR$ is integrable.
\item $\cR$ is Cartan if and only if $\hat{\cA} \rtimes \cR$ is Cartan.
\end{enumerate}
\end{lemma}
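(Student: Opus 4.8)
The plan is to exploit the fact that $\hat{\cA}\rtimes\cR$ is, topologically, a fibred product sitting over $\cR$ via the continuous, open map $(\chi,\dot\gamma)\mapsto\dot\gamma$, with fibre over $\dot\gamma$ a copy of $\hat A_{r(\gamma)}$. The key point is that $\hat{\cA}$ is a locally compact space that fibres over $\cR^{(0)}=\G^{(0)}$, and the integral of a function over a fibre of the source map of $\hat{\cA}\rtimes\cR$ decomposes into an integral over the corresponding fibre in $\cR$ of an integral over $\hat{A}_u$. So the whole argument is really about comparing the growth of Haar-system masses on $s^{-1}(N)$ in the two groupoids, and using that the extra direction $\hat\cA$ contributes a \emph{bounded} factor when $N$ is compact.

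First I would establish (1). Assuming $\G$ has a Haar system, $\cR=\G/\cA$ has a Haar system $\alpha$ (this is standard, and in any case is exactly the setting inherited from \cite{MRW96}). Since $C^*(\cA,\beta)$ is separable and commutative with spectrum $\hat\cA$, Fourier/Gelfand theory on the bundle of groups $\cA$ gives, fibrewise over $u\in\G^{(0)}$, a measure $\delta^u$ on $\hat A_u$ (the Plancherel/dual measure for $\beta^u$), and these assemble to a Haar system $\delta=\{\delta^{(\chi,u)}\}$ for the (group-bundle) groupoid $\hat\cA$ viewed over $\hat\cA$; pushing along, $\hat\cA\rtimes\cR$ carries the product Haar system $\delta\times\alpha$ defined by $\int f\,d(\delta\times\alpha)^{(\chi,u)} = \int_{\cR^u}\int_{\hat A_{s(\gamma)}} f(\psi,\dot\gamma)\,d\delta^{s(\gamma)}(\psi)\,d\alpha^u(\dot\gamma)$. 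Continuity of this system follows from continuity of $\alpha$, continuity of $\delta$, and the fact that the isotropy varies continuously so the bundle $\hat\cA\to\G^{(0)}$ is well-behaved. Now for integrability: given a compact set $N\subseteq\hat\cA$, its image $q(N)$ in $\G^{(0)}$ is compact, and $N$ is contained in $\{(\chi,u):u\in q(N)\}$; so $\sup_{(\chi,u)\in N}(\delta\times\alpha)^{(\chi,u)}(s^{-1}(N))\le \sup_{u\in q(N)}\bigl(\sup_{\gamma\in\cR^u, s(\gamma)\in q(N)}\delta^{s(\gamma)}(\hat A_{s(\gamma)})\bigr)\cdot\alpha^u(s^{-1}(q(N)))$. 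The outer factor $\sup_{u\in q(N)}\delta^u(\hat A_u)$ is finite by compactness of $q(N)$ and continuity of $\delta$, and $\hat A_u$ is compact precisely when $A_u$ is discrete — but even without that, one normalises $\delta^u$ so that the total mass is controlled on compacta (this is exactly where continuity of the isotropy is used). Hence finiteness of $\sup_{u\in q(N)}\alpha^u(s^{-1}(q(N)))$, i.e. integrability of $\cR$ at $q(N)$, forces integrability of $\hat\cA\rtimes\cR$ at $N$. The converse direction uses the section $u\mapsto(1_u,u)$: given a compact $N\subseteq\G^{(0)}$, the set $N':=\{(1_u,u):u\in N\}$ is compact in $\hat\cA$, and $s^{-1}(N')\supseteq\{(1_u,\dot\gamma):u\in N,\ s(\gamma)\in N,\ 1_u\cdot\gamma=1_{s(\gamma)}\}$ which, since $1_u\cdot\gamma=1_{s(\gamma)}$ always, is a full copy of $\cR$'s fibre; a lower bound on the $\delta$-mass near the trivial character (again from continuity of $\delta$, shrinking if necessary) then shows $(\delta\times\alpha)^{(1_u,u)}(s^{-1}(N'))\gtrsim \alpha^u(s^{-1}(N))$, so integrability of $\hat\cA\rtimes\cR$ gives integrability of $\cR$.

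For (2), I would argue directly with wandering neighbourhoods rather than through Haar systems, since Cartan is a purely topological condition and the projection $p\colon\hat\cA\rtimes\cR\to\cR$ is continuous and open with the unit-space map $\hat\cA\to\G^{(0)}$ continuous, open, and surjective (with a continuous section $u\mapsto(1_u,u)$). If $\cR$ is Cartan, fix $(\chi,u)\in\hat\cA$; let $U\subseteq\G^{(0)}$ be a wandering neighbourhood of $u$ for $\cR$, so $\cR|_U$ is relatively compact. Then $q^{-1}(U)$ is an open neighbourhood of $(\chi,u)$ in $\hat\cA$, and $(\hat\cA\rtimes\cR)|_{q^{-1}(U)}$ maps, under the coordinate projections, into $\overline{q^{-1}(U)}\times\overline{\cR|_U}\times\overline{q^{-1}(U)}$-type sets; more precisely the closure of $(\hat\cA\rtimes\cR)|_{q^{-1}(U)}$ is contained in $\{(\psi,\dot\gamma):\psi\in\overline{q^{-1}(U)},\ \dot\gamma\in\overline{\cR|_U}\}$, which is compact because $\overline{q^{-1}(U)}$ is compact (as $\overline U$ is compact and the bundle is locally compact over it) and $\overline{\cR|_U}$ is compact. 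So $q^{-1}(U)$ is wandering and $\hat\cA\rtimes\cR$ is Cartan. Conversely, if $\hat\cA\rtimes\cR$ is Cartan, fix $u\in\G^{(0)}$ and take a wandering neighbourhood $\mathcal O$ of $(1_u,u)$ in $\hat\cA$; then $U:=\{v\in\G^{(0)}:(1_v,v)\in\mathcal O\}$ is an open neighbourhood of $u$ in $\G^{(0)}$ (openness of the section composed with $\mathcal O$ open), and the map $\dot\gamma\mapsto(1_{r(\gamma)},\dot\gamma)$ embeds $\cR|_U$ as a subset of $(\hat\cA\rtimes\cR)|_{\mathcal O}$, whose closure is compact; pushing along the continuous projection to $\cR$ shows $\overline{\cR|_U}$ is compact, so $U$ is wandering and $\cR$ is Cartan.

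The main obstacle I anticipate is \emph{not} the integrability/Cartan bookkeeping, which is routine, but rather pinning down the Haar system $\delta$ on the group bundle $\hat\cA$ and verifying its continuity — in particular controlling $\sup_{u\in K}\delta^u(\hat A_u)$ on compact $K$ uniformly. This is exactly the kind of Plancherel-for-a-bundle-of-abelian-groups statement that needs the continuous variation of isotropy; I would cite \cite{renault-ideal} (or the relevant lemma in \cite{MRW96}) for the existence and continuity of the dual Haar system and then only do the combinatorial mass estimates above. If the dual Haar system's total mass is not uniformly bounded on compacta (e.g. non-compact $\hat A_u$), one must instead run the estimate with $N$ replaced by a slightly larger compact set on which one has explicit control, exactly as in the passage between \eqref{eq-defn-integrable} and \eqref{open}; this is the only place where care is genuinely needed.
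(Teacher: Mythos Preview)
Your proposal has the right overall shape---comparing fibres via the projection $p_2\colon\hat\cA\rtimes\cR\to\cR$ and the section $u\mapsto(1_u,u)$---but it rests on a misreading of the Haar system $\delta\times\alpha$, and this produces both a phantom obstacle in~(1) and a genuine gap in the forward direction of~(2).

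The $\delta$ in $\delta\times\alpha$ is \emph{point mass}, not a Plancherel or dual measure. The range fibre in $\hat\cA\rtimes\cR$ over a unit $(\chi,u)$ is $\{(\chi,\dot\gamma):r(\gamma)=u\}$: the first coordinate is frozen at $\chi$, so the fibre is canonically a copy of $\cR^u$, and the Haar measure there is simply $\delta_{(\chi,u)}\times\alpha^u$ with $\delta_{(\chi,u)}$ the Dirac mass at $(\chi,u)\in\hat\cA$. Your integral formula, which integrates a variable $\psi$ over $\hat A_{s(\gamma)}$, does not describe the range fibre at all. With the correct system, part~(1) becomes a direct comparison: for compact $K\subseteq\hat\cA$,
\[
(\delta\times\alpha)^{(\chi,u)}\bigl(s^{-1}(K)\bigr)=\alpha^u\bigl(\{\dot\gamma\in\cR^u:(\chi\cdot\gamma,s(\gamma))\in K\}\bigr)\le\alpha^u\bigl(s^{-1}(p_2(K))\bigr),
\]
and for the converse, with $\tilde L=\{(1_u,u):u\in L\}$ compact, the identity $s((1_u,\dot\gamma))=(1_{s(\gamma)},s(\gamma))$ gives the \emph{equality} $(\delta\times\alpha)^{(1_u,u)}(s^{-1}(\tilde L))=\alpha^u(s^{-1}(L))$. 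No uniform bound on any $\delta^u(\hat A_u)$ is needed; your ``main obstacle'' evaporates.

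The second gap is in the forward direction of~(2). You take $q^{-1}(U)$ as the candidate wandering neighbourhood of $(\chi,u)$ and assert that $\overline{q^{-1}(U)}$ is compact because $\overline U$ is compact and the bundle is locally compact. This fails whenever some fibre $\hat A_v$ is non-compact: local compactness of the total space does not give compactness of the preimage of a compact base. The paper instead chooses a \emph{relatively compact} neighbourhood $N$ of $(\chi,u)$ in $\hat\cA$ with $p_2(N)$ equal to a wandering neighbourhood $K$ of $u$ in $\cR^{(0)}$; then any sequence in $(\hat\cA\rtimes\cR)|_N$ has its $\cR$-component trapped in the relatively compact $\cR|_K$ and its $\hat\cA$-component trapped in $\overline N$, so one extracts a convergent subsequence. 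Your converse argument for~(2), using the section and projecting back to $\cR$, is essentially correct and matches the paper.
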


\begin{proof} (1) Since $\G$ and $\cA$ have Haar systems, $\cR$ has a Haar system $\alpha$ by \cite[Lemma~4.2]{MRW96}.
It is straightforward to check that if
$(\chi,u)\in\hat\cA$ and $\delta_{(\chi,u)}$ is point-mass measure,
then
$\delta\times\alpha^{(\chi,u)}:=\delta_{(\chi,u)}\times\alpha^u$
gives a Haar system on $\hat\cA\rtimes\cR$.

Suppose $\cR$ is integrable.  Fix a  compact subset  $K$  in
$(\hat{\cA} \rtimes \cR)^{(0)} = \hat{\cA}$.   Let
$p_2:\hat\cA\rtimes\cR\to\cR$ be the projection onto the second
coordinate; note that $p_2(K)$ is a compact subset of
$R^{(0)}=\G^{(0)}$.  We have
\begin{align*}
(\delta\times\alpha)^{(\chi,u)}(s^{-1}(K))
&=\delta_{(\chi,u)}\times\alpha^u\big(\{(\eta,\dot\gamma)\in\hat\cA\rtimes\cR:(\eta\cdot\gamma,s(\gamma))\in K \} \big)\\
&=\alpha^u\big(\{\dot\gamma\in\cR: r(\gamma)=u, (\chi\cdot\gamma,s(\gamma))\in K \} \big)\\
&\leq \alpha^u\big(\{\dot\gamma\in\cR: s(\gamma)\in p_2(K)\}\big)\\
&=\alpha^u\big(s^{-1}(p_2(K))\big).
\end{align*}
Since $\cR$ is integrable, this gives
\[
\sup_{(\chi,u)\in K}\big\{(\delta\times\alpha)^{(\chi,u)}(s^{-1}(K))\big\}\leq
\sup_{u\in p_2(K)}\big\{\alpha^u(s^{-1}(p_2(K))\})\big\}<\infty.
\]
So $\hat\cA\rtimes\cR$ is integrable.

Conversely, suppose that $\hat{\cA} \rtimes \cR$ is integrable. Fix
a compact set $L$ in $\cR^{(0)} = \G^{(0)}$.  For each $u\in L$, let
$1_u$ be the trivial character of $A_u$, so that $a\mapsto 1$ for
all $a\in A_u$.  Set $\tilde{L} = \{(1_u, u) : u \in L\} \subseteq
\hat{\cA}$. We claim that $\tilde{L}$ is a compact subset of $(\hat
\cA\rtimes\cR)^{(0)}=\hat \cA$.   To see this, let $\{(1_{v_n},
v_n)\}$ be a sequence in $\tilde L$. Then $\{v_n\}$ is a sequence in
$L$ and hence has a convergent subsequence $v_{n_k}\to v\in L$.
Using the convergence criteria for sequences in $\hat \cA$ of
\cite[Proposition~3.3]{MRW96} it is clear that
$(1_{v_{n_k}},v_{n_k})\to (1_v, v)$ in $\hat\cA$. Thus $\tilde{L}$
is compact in $\hat{\cA}$.

Now note that $s((1_u,\dot\gamma))=(1_u, s(\gamma))$, so that $s((1_u,\dot\gamma))\in\tilde L$ if and only if $s(\dot\gamma)\in L$. Thus
\begin{align*}
\sup_{u\in L}\big\{\alpha^u(s^{-1}( L))\big\}
\leq \sup_{(1_u,u)\in
\tilde{L}}\{(\delta \times \alpha)^{(1_u,u)}(s^{-1}( \tilde{L}))\}<
\infty
\end{align*}
because $\hat{\cA} \rtimes \cR$  is integrable and $\tilde L$ is a compact subset of its unit space. So $\cR$ is integrable.

(2)  First suppose that $\cR$ is Cartan.  Fix $(\chi,
u)\in\hat\cA=(\hat \cA\rtimes \cR)^{(0)}$. Let $K$ be a relatively
compact, wandering neighbourhood of $u$ in $\cR^{(0)}$. Let
$p_1:\hat\cA\rtimes\cR\to\cA$,  $p_2:\hat\cA\rtimes\cR\to\cR$ be the
projections onto the first and second coordinate, respectively. Let
$N$ be a relatively compact neighbourhood of $(\chi, u)$ in $\hat\cA$
such that $p_2(N)=K$.  Let $\{(\eta_n,\dot\gamma_n)\}$ be a sequence
in $\pi^{-1}(N\times N)=\{(\chi,\dot\gamma):(\chi,r(\gamma))\in N,
(\chi\cdot\dot\gamma,s(\gamma))\in N\}$.  Then
$\{\dot\gamma_n\}\subseteq\pi^{-1}(p_2(N)\times
p_2(N))=\pi^{-1}(K\times K)$, hence has a convergent subsequence
$\{\dot\gamma_{n_k}\}$. Note $\{\eta_{n_k}\}\subseteq p_1(N)$, a
relatively compact set. So there exists a convergent subsequence
$\{\eta_{n_{k_i}}\}$.  So $\{(\eta_{n_{k_i}}\dot\gamma_{n_{k_i}}\}$
is a convergent subsequence of $\{(\eta_n,\dot\gamma_n)\}$.  Thus
$\pi^{-1}(N\times N)$ is relatively compact. Hence $\hat \cA\rtimes
\cR$ is Cartan.

Conversely, suppose that $\hat \cA\rtimes \cR$ is Cartan.  Fix $u\in
\cR^{(0)}$.  There exists  a wandering neighbourhood $N$ of $(1_{u},
u)$ in $\hat\cA$.  Let $K=p_2(N)$; then $K$ is a neighbourhood of
$u$.  Let $\{\dot\gamma_n\}\subseteq\pi^{-1}(K\times K)$.  For each
$n$ there exists $\eta_n$ such that
$(\eta_n,\dot\gamma_n)\in\pi^{-1}(N\times N)$.  But
$\pi^{-1}(N\times N)$ is relatively compact, so
$\{(\eta_n,\dot\gamma_n)\}$ has a convergent subsequence
$\{(\eta_{n_k},\dot\gamma_{n_k})\}$. Thus $\{\dot\gamma_{n_k}\}$ is
a convergent subsequence of $\{\dot\gamma_n\}$. Thus
$\pi^{-1}(K\times K)$ is relatively compact.  Hence $\cR$ is Cartan.
\end{proof}

We will now briefly describe the $\T$-groupoid $\cD$  of \cite[\S4]{MRW96}.  There
\begin{equation}\label{eq-D}
\cD:=\{ (\chi,z,\gamma):\chi\in\hat A_{r(\gamma)}, z\in\T, \gamma\in\G \}/\!\sim,
\end{equation}
where $(\chi, \chi(a)z,\gamma)\sim(\chi, z, a\cdot\gamma)$; the unit space is $\hat\cA$ with
\[
r([(\chi,z,\gamma)])=(\chi, r(\gamma))\quad\text{and}\quad s([(\chi,z,\gamma)])=(\chi\cdot\gamma, s(\gamma))
\]
and multiplication and inverse
\[[(\chi,z,\gamma)][(\chi\cdot\gamma,z',\gamma')]=[(\chi,zz',\gamma\gamma')]\quad
\text{and}\quad[(\chi,z,\gamma)] ^{-1}=[(\chi\cdot\gamma,\bar
z,\gamma^{-1})].\] That $\cD$ is indeed a $\T$-groupoid over
$\hat\cA\rtimes\cR$ is established on \cite[p.~3636]{MRW96}.

Proposition~4.5 of \cite{MRW96} says that if $\hat \cA/\cR$
is Hausdorff,  then $\cs(\G)$ and $\cs(\cD;\hat{\cA} \rtimes \cR,
\delta \times \alpha)$ are isomorphic.  We now establish that the
given proof works almost as is written even if $\hat \cA/\cR$ is
only $T_1$.  Proposition~4.5 of \cite{MRW96}  uses the Hausdorff
assumption in three places.  The first use is in Lemma~4.8 to
establish that the $\G$-orbits in $\G^{(0)}$ are closed; so assuming
$\hat \cA/\cR$ is  $T_1$ suffices by  Lemma~\ref{t1}. The second use
is to establish again that the $\G$-orbits are closed in $\G^{(0)}$
so that \cite[Lemma~2.11]{MRW96} applies. The third use is to
establish that every irreducible representation of
$\cs(\cD;\hat{\cA} \rtimes \cR, \delta \times \alpha)$ is of the
form $[L^{(\chi,u)}]$; here we note that $(\hat{\cA} \rtimes
\cR)^{(0)}/(\hat{\cA} \rtimes \cR)$ is homeomorphic to $\hat
\cA/\cR$, so we  can use Proposition~\ref{MWthm} for this if $\hat
\cA/\cR$ is  $T_1$. Thus we have:

\begin{prop}
\label{isomo}
 Suppose $\G$ is a second-countable, locally compact, Hausdorff groupoid
with Haar system $\lambda$.  Also suppose that the isotropy groups
of $\G$ are abelian and vary continuously. If $\G^{(0)}/\G$ is $T_1$,
then $\cs(\G,\lambda)$ and $\cs(\cD;\hat{\cA} \rtimes \cR, \delta
\times \alpha)$ are isomorphic.
\end{prop}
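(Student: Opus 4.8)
The plan is to follow the proof of \cite[Proposition~4.5]{MRW96} essentially verbatim, intervening only at the three places where that argument uses that $\hat\cA/\cR$ is Hausdorff, and checking that the weaker hypothesis ``$\hat\cA/\cR$ is $T_1$'' suffices at each of them. The first step is bookkeeping of hypotheses: by Lemma~\ref{t1} the assumption that $\G^{(0)}/\G$ is $T_1$ is equivalent to $\hat\cA/\cR$ being $T_1$, and since a quotient map is continuous and surjective, a $T_1$ orbit space is precisely one in which every orbit (being the preimage of a point under the quotient map) is closed in the relevant unit space. So from the outset I may assume that the $\G$-orbits are closed in $\G^{(0)}$ and the $\cR$-orbits are closed in $\hat\cA$.

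The first two uses of the Hausdorff hypothesis are then immediate to dispatch. In \cite[Lemma~4.8]{MRW96} the hypothesis is invoked only to conclude that the $\G$-orbits are closed in $\G^{(0)}$, which now holds; the second use is again only to obtain closedness of the $\G$-orbits so that \cite[Lemma~2.11]{MRW96} applies, which again holds. With these substitutions the construction of the homomorphism between $\cs(\G,\lambda)$ and $\cs(\cD;\hat\cA\rtimes\cR,\delta\times\alpha)$, and the verification that it is an isomorphism, proceed exactly as in \cite{MRW96}.

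The third, substantive use is that the proof of \cite[Proposition~4.5]{MRW96} needs every irreducible representation of $\cs(\cD;\hat\cA\rtimes\cR,\delta\times\alpha)$ to be unitarily equivalent to one of the representations $L^{(\chi,u)}$ attached to a unit $(\chi,u)\in\hat\cA$; in \cite{MRW96} this was extracted from \cite[Proposition~3.3]{MW92} and hence needed Hausdorffness. Instead I would observe that $\cD$ is a second-countable, locally compact, Hausdorff $\T$-groupoid over the principal groupoid $\hat\cA\rtimes\cR$, which carries the Haar system $\delta\times\alpha$ of Lemma~\ref{intR}(1), so Proposition~\ref{MWthm} applies directly to the pair $(\cD,\hat\cA\rtimes\cR)$. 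Its unit-space orbit space $(\hat\cA\rtimes\cR)^{(0)}/(\hat\cA\rtimes\cR)$ coincides as a topological space with $\hat\cA/\cR$, because the $(\hat\cA\rtimes\cR)$-action and the $\cR$-action of \eqref{eqRaction} on $\hat\cA$ have the same orbits and both quotient topologies are the quotient of the topology of $\hat\cA$; hence it is $T_1$ by Lemma~\ref{t1}. Proposition~\ref{MWthm}(2) then gives that $\Psi$ is a homeomorphism of $(\hat\cA\rtimes\cR)^{(0)}/(\hat\cA\rtimes\cR)$ onto $\cs(\cD;\hat\cA\rtimes\cR,\delta\times\alpha)^\wedge$, so in particular every irreducible representation is some $[L^{(\chi,u)}]$, as needed.

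The main obstacle is not any single estimate but the audit: one must go through the proof of \cite[Proposition~4.5]{MRW96} line by line and confirm that the disintegration argument and the identification of the twist $\cD$ depend on Hausdorffness of $\hat\cA/\cR$ only through closedness of orbits and the structure theorem for irreducible representations, and that no other ingredient (for instance the openness statements for transitive groupoids, which here are supplied by $\G$ having a Haar system, or the various Borel-regularity and continuity arguments) secretly requires more. Once that inventory is complete, the three replacements above finish the proof.
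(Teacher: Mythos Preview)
Your proposal is correct and follows essentially the same approach as the paper: both arguments track the three places in the proof of \cite[Proposition~4.5]{MRW96} where Hausdorffness of $\hat\cA/\cR$ is used, and replace them with the $T_1$ hypothesis via Lemma~\ref{t1} (for closedness of orbits in the first two places) and Proposition~\ref{MWthm}(2) together with the identification of $(\hat\cA\rtimes\cR)^{(0)}/(\hat\cA\rtimes\cR)$ with $\hat\cA/\cR$ (for the third). Your write-up is slightly more detailed about the bookkeeping and the audit, but the substance is the same.
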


\begin{thm}\label{thm-end}
Suppose that $\G$ is a second-countable,  locally compact, Hausdorff
groupoid with Haar system $\lambda$. Also suppose that the isotropy
groups of $\G$ are  abelian and vary continuously. Let $\cA$ be the
isotropy groupoid. The following are equivalent:
\begin{enumerate}
\item $\cs(\G,\lambda)$ has bounded trace;
\item $\cR:=\G/\cA$ is integrable;
\item $C^*(\cR)$ has bounded trace.
\end{enumerate}
\end{thm}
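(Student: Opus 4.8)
The plan is to deduce everything from the twisted‑groupoid results of Sections~4 via the isomorphism of \propref{isomo}. Since $\cR$ is a principal groupoid carrying a Haar system $\alpha$ (\lemref{intR}(1)), the equivalence of (2) and (3) is immediate from \cite[Theorem~4.4]{CaH}, exactly as in \thmref{thm-hard}; so it remains to prove (1)$\Leftrightarrow$(2).

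The crux of both implications is to verify that $\G^{(0)}/\G$ is $T_1$, which is precisely what licenses an application of \propref{isomo}. For (1)$\Rightarrow$(2): a $C^*$-algebra with bounded trace is liminal, and since the isotropy groups of $\G$ are abelian (hence amenable, with commutative — so liminal — group $C^*$-algebras), \cite[Theorem~6.1]{C2} forces $\G^{(0)}/\G$ to be $T_1$. For (2)$\Rightarrow$(1): if $\cR$ is integrable then, being a principal integrable groupoid, its orbits are locally closed by \lemref{lem-locallyclosedorbits} and hence closed by \cite[Lemma~3.9]{CaH}; since the orbit equivalence relation of $\cR$ on $\cR^{(0)}=\G^{(0)}$ coincides with that of $\G$, the quotient $\G^{(0)}/\G=\cR^{(0)}/\cR$ is $T_1$.

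Granting that $\G^{(0)}/\G$ is $T_1$, \propref{isomo} supplies an isomorphism $\cs(\G,\lambda)\cong\cs(\cD;\hat\cA\rtimes\cR,\delta\times\alpha)$, where $\cD$ is the $\T$-groupoid over the principal, second-countable, locally compact, Hausdorff groupoid $\hat\cA\rtimes\cR$ (with Haar system $\delta\times\alpha$) described in \cite[\S4]{MRW96}. Hence \thmref{thm-hard}, applied to this twisted groupoid $C^*$-algebra, gives that $\cs(\G,\lambda)$ has bounded trace if and only if $\hat\cA\rtimes\cR$ is integrable, and by \lemref{intR}(1) the latter is equivalent to $\cR$ being integrable. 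Assembling this: under (1) we get $\G^{(0)}/\G$ is $T_1$, then the isomorphism, then $\hat\cA\rtimes\cR$ integrable by \thmref{thm-hard}, then $\cR$ integrable by \lemref{intR}(1), which is (2); conversely, under (2) we get $\G^{(0)}/\G$ is $T_1$ from the orbit argument above, then the isomorphism, and since $\hat\cA\rtimes\cR$ is integrable (\lemref{intR}(1)), \thmref{thm-hard} makes $\cs(\G,\lambda)$ a bounded-trace algebra, which is (1).

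The only genuinely delicate point is this $T_1$ reduction; everything else is a matter of assembling results already in hand. In writing this up I would double-check that \cite[Theorem~6.1]{C2} really does yield ``liminal $\Rightarrow$ $T_1$ orbit space'' for our (non-principal) $\G$ with amenable isotropy, and that $\hat\cA\rtimes\cR$ inherits second countability and local compactness so that \thmref{thm-hard} genuinely applies to $\cs(\cD;\hat\cA\rtimes\cR,\delta\times\alpha)$.
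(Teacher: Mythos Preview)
Your proposal is correct and follows essentially the same route as the paper: the equivalence (2)$\Leftrightarrow$(3) via \cite[Theorem~4.4]{CaH}, the $T_1$ reduction (via \cite[Theorem~6.1]{C2} for (1)$\Rightarrow$(2) and via \lemref{lem-locallyclosedorbits} together with \cite[Lemma~3.9]{CaH} for (2)$\Rightarrow$(1)), then \propref{isomo}, \thmref{thm-hard}, and \lemref{intR}(1). The only cosmetic difference is that you isolate the $T_1$ verification up front and then run the isomorphism/\thmref{thm-hard}/\lemref{intR}(1) chain once, whereas the paper treats the two implications in sequence.
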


\begin{proof} Since $\cR$ is principal, the equivalence of (2) and (3) is \cite[Theorem~4.4]{CaH}; we will now prove the
equivalence of (1) and (2). Note that the isotropy groups vary continuously if and only if
the isotropy groupoid $\cA$ has a Haar system by \cite[Lemmas~1.1 and 1.2]{renault-ideal}.

First suppose that $\cs(\G,\lambda)$ has bounded trace.  Then
$\cs(\G,\lambda)$ is liminal and hence the orbits of $\G$ are closed
by \cite[Theorem~6.1]{C2}.  Now $\cs(\G, \lambda)$ and
$\cs(\mathcal{D};\hat{\cA} \rtimes \cR, \delta \times \alpha)$  are
isomorphic by Proposition~\ref{isomo}.  Thus
$\cs(\mathcal{D};\hat{\cA} \rtimes \cR, \delta \times \alpha)$ has
bounded trace as well.  Thus $\hat{\cA} \rtimes \cR$ is integrable
by Theorem~\ref{thm-hard}, and hence $\cR$ is integrable by
Lemma~\ref{intR}(1).

Conversely,  suppose $\cR$ is integrable.  Then the orbits in $\cR$
are closed  by \cite[Lemma~3.9]{CaH} and
Lemma~\ref{lem-locallyclosedorbits}. Since $\cR$ and $\G$ have the
same orbit space, orbits are closed in $\G$.   By
Proposition~\ref{isomo}, $\cs(G,\lambda)$ and
$\cs(\mathcal{D};\hat{\cA} \rtimes \cR, \alpha)$  are isomorphic.
Since $\cR$ is integrable,  $\hat{\cA} \rtimes \cR$ is integrable by
Lemma~\ref{intR}(1).  Thus $\cs(\mathcal{D};\hat{\cA} \rtimes \cR,
\alpha)$, and hence  $\cs(\G,\lambda)$, has bounded trace by
Theorem~\ref{thm-hard}.
\end{proof}

\begin{thm}\label{thm-Cartan2}
Suppose that $\G$ is a second-countable,  locally compact, Hausdorff
groupoid with Haar system $\lambda$. Also suppose that the isotropy
groups of $\G$ are  abelian and vary continuously. Let $\cA$ be the
isotropy groupoid. The following are equivalent:
\begin{enumerate}
\item $\cs(\G,\lambda)$ is a Fell algebra;
\item $\cR:=\G/\cA$ is Cartan;
\item $C^*(\cR)$ is a Fell algebra.
\end{enumerate}
\end{thm}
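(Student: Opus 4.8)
The plan is to follow the template of the proof of \thmref{thm-end} verbatim, replacing ``integrable'' by ``Cartan'' throughout and invoking \thmref{thm-Fell} and \lemref{intR}(2) in place of \thmref{thm-hard} and \lemref{intR}(1). Since $\cR=\G/\cA$ is principal, the equivalence of (2) and (3) is immediate from \cite[Theorem~7.9]{C}, so the real work is the equivalence of (1) and (2). As in \thmref{thm-end}, I would first note that the continuous variation of the isotropy is equivalent to $\cA$ having a Haar system by \cite[Lemmas~1.1 and 1.2]{renault-ideal}, so that the $\T$-groupoid $\cD$ over $\hat{\cA}\rtimes\cR$ and the isomorphism of \propref{isomo} are available to us.

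For (1)$\Rightarrow$(2): assume $\cs(\G,\lambda)$ is a Fell algebra. Then $\cs(\G,\lambda)$ is liminal, since Fell algebras sit below liminal algebras in the reverse-containment ordering recalled in Section~2, so the orbits of $\G$ are closed by \cite[Theorem~6.1]{C2}; in particular $\G^{(0)}/\G$ is $T_1$. Hence \propref{isomo} applies and gives an isomorphism $\cs(\G,\lambda)\cong\cs(\cD;\hat{\cA}\rtimes\cR,\delta\times\alpha)$. The right-hand side is then a Fell algebra, so $\hat{\cA}\rtimes\cR$ is Cartan by \thmref{thm-Fell}, and therefore $\cR$ is Cartan by \lemref{intR}(2).

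For (2)$\Rightarrow$(1): assume $\cR$ is Cartan. Then $\cR^{(0)}/\cR$ is $T_1$ by \cite[Lemma~7.4]{C}; since $\cR$ and $\G$ have the same orbit space, $\G^{(0)}/\G$ is $T_1$, so \propref{isomo} again yields $\cs(\G,\lambda)\cong\cs(\cD;\hat{\cA}\rtimes\cR,\delta\times\alpha)$. By \lemref{intR}(2), $\hat{\cA}\rtimes\cR$ is Cartan, whence $\cs(\cD;\hat{\cA}\rtimes\cR,\delta\times\alpha)$ is a Fell algebra by \thmref{thm-Fell}; transporting across the isomorphism, $\cs(\G,\lambda)$ is a Fell algebra.

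The argument is essentially bookkeeping: there is no genuinely new estimate to perform, since the hard analytic content sits in \thmref{thm-Fell} (via \cite{MW90}) and in \propref{isomo}. The only points that require care are obtaining the hypothesis ``$\G^{(0)}/\G$ is $T_1$'' of \propref{isomo} cleanly in each direction — from Fell $\Rightarrow$ liminal together with \cite[Theorem~6.1]{C2} in (1)$\Rightarrow$(2), and from $\cR$ Cartan $\Rightarrow$ $\cR^{(0)}/\cR$ is $T_1$ via \cite[Lemma~7.4]{C} together with the shared orbit space in (2)$\Rightarrow$(1) — and making sure the Haar systems matched up in \lemref{intR} are exactly the $\delta\times\alpha$ appearing in $\cs(\cD;\hat{\cA}\rtimes\cR,\delta\times\alpha)$, so that \thmref{thm-Fell} is applied to the correct $C^*$-algebra.
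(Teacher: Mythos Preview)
Your proposal is correct and follows exactly the template the paper intends: (2)$\Leftrightarrow$(3) via \cite[Theorem~7.9]{C}, and (1)$\Leftrightarrow$(2) by mirroring the proof of \thmref{thm-end} with \thmref{thm-Fell} and \lemref{intR}(2) in place of \thmref{thm-hard} and \lemref{intR}(1). The only discrepancy is cosmetic: the paper cites \cite[Lemma~4.7]{C} where you cite \cite[Lemma~7.4]{C} for ``Cartan $\Rightarrow$ $T_1$ orbit space''; since the paper itself invokes \cite[Lemma~7.4]{C} for exactly this implication in the proof of \thmref{thm-Fell}, your citation is the intended one.
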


\begin{proof} Since $\cR$ is principal, the equivalence of (2) and (3) is \cite[Theorem~7.9]{C}.
The proof of the equivalence of (1) and (2) is  similar to the proof of Theorem~\ref{thm-end}, using Lemma~\ref{intR}(2), Theorem~\ref{thm-Fell}
and \cite[Lemma~4.7]{C} in place of Lemma~\ref{intR}(1),Theorem~\ref{thm-hard} and \cite[Lemma~3.9]{CaH}, respectively.
\end{proof}



\section{Examples} Our examples use  groupoids constructed from directed graphs.  We start with some background.
Let  $E=(E^0,E^1,r,s)$  be a directed graph. Thus $E^0$ and $E^1$
are countable sets of vertices and edges,  respectively, and
$r,s:E^1\to E^0$ are the range and source map, respectively. For
$e\in E^1$, call $s(e)$ the  source of $e$ and $r(e)$ the range of
$e$. A directed graph $E$ is row-finite if $r^{-1}(v)$ is finite for
every $v\in E^0$. A finite path is a finite sequence
$\alpha=\alpha_1\alpha_2\cdots\alpha_k$ of edges $\alpha_i\in E^1$
with $s(\alpha_j)=r(\alpha_{j+1})$ for $1\le j\le k-1$; write
$s(\alpha)=s(\alpha_k)$ and $r(\alpha)=r(\alpha_1)$, and call
$|\alpha|:=k$ the  length of $\alpha$. An  infinite path
$x=x_1x_2\cdots$ is defined similarly, although $s(x)$ remains
undefined. Let $E^*$ and $E^\infty$ denote the set of all finite
paths and infinite paths in $E$ respectively. If
$\alpha=\alpha_1\cdots\alpha_k$ and $\beta=\beta_1\cdots\beta_j$ are
finite paths with $s(\alpha)=r(\beta)$, then $\alpha\beta$ is the
path $\alpha_1\cdots\alpha_k\beta_1\cdots\beta_j$. When $x\in
E^\infty$ with $s(\alpha)=r(x)$ define $\alpha x$ similarly. A cycle
is a finite path $\alpha$ of non-zero length such that
$s(\alpha)=r(\alpha)$.  By \cite[Corollary~2.2]{KPRR}, the cylinder
sets
\[
Z(\alpha):=\{x\in E^\infty
:x_1=\alpha_1,\ldots,x_{|\alpha|}=\alpha_{|\alpha|}\},
\]
parameterized by $\alpha\in E^*$, form a basis of compact, open sets
for a locally compact, $\sigma$-compact,  totally disconnected,
Hausdorff topology on $E^\infty$.

In  \cite{KPRR}, Kumjian, Pask, Raeburn and Renault built a groupoid
$\G_E$, called the path groupoid,  from a row-finite directed
graph $E$ as follows. Two paths $x,y\in E^\infty$ are shift
equivalent with lag $k\in\Z$ (written $x\sim_k y$) if there exists
$N\in\N$ such that $x_i=y_{i+k}$ for all $i\ge N$. Then  the
groupoid is
\[ \G_E:=\{(x,k,y)\in E^\infty\times \Z\times
E^\infty :x\sim_k y\}.
\]
with composable pairs
\[
\G_E^{(2)}:=\{\big((x,k,y),(y,l,z)\big):(x,k,y),(y,l,z)\in \G_E\},
\]
and composition and inverse given by
\[
(x,k,y)\cdot (y,l,z):=(x,k+l,z)\quad\text{and}\quad(x,k,y)^{-1}:=(y,-k,x).
\]

For each $\alpha,\beta\in E^*$ with $s(\alpha)=s(\beta)$, let $Z(\alpha,\beta)$ be the set
\[
\{(x,k,y):x\in Z(\alpha), y\in Z(\beta), k=|\beta|-|\alpha|, x_i=y_{i+k}\text{ for } i>|\alpha|\}.
\]
By \cite[Proposition~2.6]{KPRR}, the collection of sets
\[
\{Z(\alpha,\beta):\alpha,\beta\in E^*, s(\alpha)=s(\beta)\}
\]
is a basis of compact, open sets for a second-countable, locally
compact, Hausdorff topology on $\G_E$ such that  $\G_E$ is an
r-discrete groupoid with a Haar system of counting measures.  After
identifying each $(x,0,x)\in \G_E^{(0)}$ with $x\in E^\infty $,
\cite[Proposition~2.6]{KPRR} says that the topology on $\G_E^{(0)}$
is identical to the topology on $E^\infty $. We caution that in our
notation (which is now standard) the sources and ranges are swapped
from the notation used in \cite{KPRR}.

\begin{example}\label{ex-1} Let $E$ be the graph
\[\xymatrix{\overset{v_0}\bullet&&\overset{v_1}\bullet\ar[ll]&&\overset{v_2}\bullet\ar[ll]&&\dots\ar[ll]\\
\overset{w_{0,0}}\bullet\ar@(dr,dl)^{e_{0,0}}\ar@/^/[u]^{f_{0,0}}\ar@/_/[u]_{f_{0,1}}
&&\overset{w_{1,0}}\bullet\ar@/^/[u]^{f_{1,0}}\ar@/_/[u]_{f_{1,1}}\ar@/_/[d]_{e_{1,1}}
&&\overset{w_{2,0}}\bullet\ar@/^/[u]^{{f}_{2,0}}\ar@/_/[u]_{{f}_{2,1}}
\ar@/_/[ld]_{e_{2,2}}&&\dots\\
&&\underset{w_{1,1}}\bullet\ar@/_/[u]_{e_{1,0}}&\underset{w_{2,1}}\bullet
\ar@/_/[rr]_{e_{2,1}}&&\underset{w_{2,2}}\bullet\ar@/_/[lu]_{e_{2,0}}&\dots
}
\]
Let $x\in E^\infty$.  If $x=\alpha\alpha\alpha...$ for some cycle
$\alpha$  with $r(\alpha)=w_{n,k}$ for  some $0\leq k\leq n$, then
the isotropy subgroup of $x$ in $\G_E$  is $A_{x}=(n+1)\Z$;
otherwise $A_x=\{0\}$. It is straightforward to check that the
isotropy subgroups vary continuously in the Fell topology. We claim
that the groupoid $C^*$-algebra $C^*(\G_E)$ has bounded trace  but
is not a Fell algebra.  To see this, by Theorems~\ref{thm-end}
and~\ref{thm-Cartan2},  we need to show that $\cR:=\G_E/\cA$ is
integrable but not Cartan.

We start by considering the following graph $F$ from \cite[\S8]{HaH}:
\[\xymatrix{\overset{v'_0}\bullet&&\overset{v'_1}\bullet\ar[ll]&&\overset{v'_2}\bullet\ar[ll]&&\dots\ar[ll]\\
\overset{w'_{0,0}}\bullet\ar@/^/[u]^{f'_{0,0}}\ar@/_/[u]_{f'_{0,1}}
&&\overset{w'_{1,0}}\bullet\ar@/^/[u]^{f'_{1,0}}\ar@/_/[u]_{f'_{1,1}}&&\overset{w'_{2,0}}\bullet
\ar@/^/[u]^{f'_{2,0}}\ar@/_/[u]_{f'_{2,1}}&&\dots\\
\bullet\ar[u]^{e'_{0,0}}&&\overset{w'_{1,1}}\bullet\ar[u]^{e'_{1,0}}&&\overset{w'_{2,1}}\bullet
\ar[u]^{e'_{2,0}}&&\dots\\
\bullet\ar[u] &&\bullet\ar[u]^{e'_{1,1}}&&\overset{w'_{2,2}}\bullet
\ar[u]^{e'_{2,1}}&&\dots\\
\vdots\ar[u]&&\vdots\ar[u] &&\vdots\ar[u]^{e'_{2,2}}}\] There are no
cycles in $F$, so $\G_F$ is a principal groupoid by
\cite[Proposition~8.1]{HaH}. By \cite{Robbie}, the groupoid $\G_F$
is integrable.  So $C^*(\G_F)$ has bounded trace by
\cite[Theorem~4.4]{CaH}. For $n\geq 0$, let $x^n$ be the unique
infinite path with range $v'_0$ which has $f'_{n,0}$ as an edge, let
$y^n$ be the  unique infinite path with range $v'_0$ which has
$f'_{n,1}$ as an edge, and let $z$ be the infinite path going
through each $v'_i$.  It is shown in \cite[Example~8.2]{HaH} that
the sequence $\{x^n\}$ converges $2$-times in $\G_F^{(0)}/\G_F$ to
$z$; the sequences in $\G_F$ witnessing this $2$-times convergence
are $\gamma_n^{(1)}=(x^n,0,x^n)\text{\ and \
}\gamma_n^{(2)}=(y^n,0,x^n)$. It follows that $\G_F$ is not a Cartan
groupoid by Lemma~\ref{lem-2times}.  Since $\G_F$ is principal,
$C^*(\G_F)$ is not a Fell algebra by \cite[Theorem~7.9]{C}.

Now consider the open subset  \[U=\bigcup_{i\geq
0}\big(Z(v'_i)\cup_{j\leq i} Z(w'_{i,j})\big)
\]
of $F^\infty$. Let $G$ be the groupoid obtained by restricting
$\G_F$ to $U$.  Then $G$ is a principal, integrable  groupoid which
is not Cartan (because the two sequences witnessing the $2$-times
convergence of $\{x^n\}$ in $\G_F$ are also in $G$).  Thus
$C^*(G)$ has bounded trace but is not a Fell algebra.

We claim that $G$ is isomorphic to $\cR=\G_E/\cA$. To see this,
first note that ``unwrapping'' cycles in $E^*$ sets up a bijection
$\phi$  between $E^*$ and the set of finite paths in $U$; similarly
``unwrapping'' cycles in $E^\infty$ sets up  a bijection $\psi$
between $E^\infty$  and the set of infinite paths in $U$.  If
$\alpha$ is a finite path in $E^*$ then
$Z(\phi(\alpha))=\psi(Z(\alpha))$. Since the cylinder sets form a
basis for the topology on $E^\infty$, $\psi:E^\infty\to U$ is a
homeomorphism.

Second,  fix $(x, k,y)\in \G_E$ so that  $x\sim_k y$. Then either
(1) $x$ and $y$ are of the form $x=\alpha\gamma\gamma...$,
$y=\beta\gamma\gamma....$ where $\gamma$ is a cycle with
$r(\gamma)=w_{n,0}$ for some $n\in\N$, and $\alpha,\beta\in E^*$
don't contain $\gamma$, or (2) both $x$ and $y$ do not contain
cycles.  In (1), $\psi(x)$ and $\psi(y)$ are shift-equivalent with
lag $|\alpha|-|\beta|$, and in (2) $\psi(x)$ and $\psi(y)$ are
shift-equivalent with lag $k$. Thus, since $G$ is principal, if $(x,
k,y)\in \G_E$ then there exists a unique $l_k$ such that $(\psi(x),
l_k,\psi(y))\in G$.

Finally, it is now straightforward to verify that \[\rho:\G_E\to G
\text{\ defined by\  }\rho((x,k,y))=(\psi(x), l_k,\psi(y))\] is a
groupoid homomorphism which is continuous, open and surjective, and
that $\rho$ induces a homeomorphism $\rho:\cR\to G$.  Thus $\cR$ is
an integrable groupoid which is not Cartan, and hence $C^*(\G_E)$
has bounded trace  but is not a Fell algebra by
Theorems~\ref{thm-end} and~\ref{thm-Cartan2}.

\end{example}

\begin{example}  Let $\G_E$ be the groupoid from Example~\ref{ex-1}.  Let $\cD_E$ be
the  associated  $\T$-groupoid over   $\hat\cA\rtimes\cR$ defined
by Muhly-Williams-Renault
 (see \eqref{eq-D}).  Note that the orbit space  $\G_E^{(0)}/\G_E$ is $T_1$, so  $C^*(\G_E)$ and $C^*(\cD_E;\hat\cA\rtimes\cR,\delta\times\alpha)$ are isomorphic by
 Proposition~\ref{isomo}. Thus $C^*(\cD_E;\hat\cA\rtimes\cR,\delta\times\alpha)$ has bounded trace but is not a Fell algebra, and hence  by Theorems~\ref{thm-hard}
 and~\ref{thm-Fell}, $\hat\cA\rtimes\cR$ is an integrable groupoid but is not Cartan.
\end{example}


\appendix\section{Corrections to the proof of Theorem~2.3 of \cite{MW90}\\ contributed by Dana P. Williams.}

Robert Hazlewood pointed out that there is a problem with the proof
of Theorem~3.2 in \cite{MW90}.  On the bottom of page~237, we assert
that we can find neighbourhoods $V_{0}$ and $V_{1}$ of $z$ such that
$\overline{V_{0}}\subseteq V_{1}$ with the property that\footnote{We
are retaining
  the notations of \cite{MW90} except we have dropped the
  fraktur font for groupoids and written $G$ in place of $\frak
G$ for clarity.  This is more of an issue in \cite{MW92} where
our readers have been frustrated trying to distinguish between $\frak G$,
$\frak S$ and $\frak E$ --- rather than between $G$, $S$ and $E$.}
\begin{equation}
  \label{eq:1}
  \overline{W_{1}}^{7}\overline{V_{1}}\; \overline{W_{1}}^{7} \setminus
  W_{0}V_{0}W_{0}\subseteq r^{-1}(\go\setminus N).
\end{equation}
Unfortunately, if
$\overline{V_{1}}$ is larger than $V_{0}$, then
we see no reason such neighbourhoods should exist.  In fact, we now
suspect that it is not
possible to find such neighbourhoods --- let alone via
a ``straightforward
compactness argument''.  However, \eqref{eq:1} does hold provided we
restrict to elements with source sufficiently close to $z$.\footnote{A
similar restriction was required in \cite{wiljfa81} --- the function
$f_{x}^{1}$ defined on the bottom of \cite[p.~61]{wiljfa81} is only
well-defined on $U_{0}$ (even though I failed to mention this).  This
is reflected in the statement of \cite[Lemma~4.4]{wiljfa81}.}  Namely,
we can prove the following.

\begin{lemma}\label{danalemma1}
  Given neighbourhoods $V_{0}$ and $V_{1}$ of $z$ in $G$ with $V_{0}$
  open and $V_{1}$ relatively compact, there is a compact neighbourhood $A$ of
  $z$ in $\go$ such that
  \begin{equation}
    \label{eq:2}
    (\overline{W_{1}}^{7}\overline {V_{1}} \setminus W_{0}V_{0})\cap
    G_{A}\subseteq r^{-1}(\go\setminus N),
  \end{equation}
where $G_{A}:=s^{-1}(A)$.
\end{lemma}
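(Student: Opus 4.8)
The plan is to reduce \eqref{eq:2} to a statement about the single source-fibre $G_z:=s^{-1}(\{z\})$ over $z$, and then to produce the compact neighbourhood $A$ by separating $z$ from a closed set. Two facts are available in the setting of \cite{MW90}: since $W_1$ is conditionally compact and $V_1$ is relatively compact, the set $\overline{W_1}^{7}\overline{V_1}$ is compact, hence closed in $G$; and since $G$ carries a Haar system, the range map is open \cite[Proposition~2.4]{renault}, so multiplication is open and $W_0V_0$ is an open subset of $G$.

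First I would put $B:=(\overline{W_1}^{7}\overline{V_1}\setminus W_0V_0)\cap r^{-1}(N)$. As $\overline{W_1}^{7}\overline{V_1}$ is compact and closed, $W_0V_0$ is open, and $N$ is closed, $B$ is a compact subset of $G$; hence $s(B)$ is a compact, and therefore closed, subset of $\go$. Suppose for the moment that $z\notin s(B)$. Then $\go\setminus s(B)$ is an open neighbourhood of $z$, and by local compactness of $\go$ there is a compact neighbourhood $A$ of $z$ contained in it, so that $A\cap s(B)=\varnothing$. For this $A$: if $\gamma\in(\overline{W_1}^{7}\overline{V_1}\setminus W_0V_0)\cap G_A$ then $\gamma\notin B$ because $s(\gamma)\in A$; since $\gamma$ nonetheless lies in $\overline{W_1}^{7}\overline{V_1}\setminus W_0V_0$, this forces $\gamma\notin r^{-1}(N)$, that is $r(\gamma)\in\go\setminus N$, which is exactly \eqref{eq:2}. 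So the whole lemma reduces to the fibrewise assertion
\[
(\overline{W_1}^{7}\overline{V_1}\setminus W_0V_0)\cap G_z\;\subseteq\;r^{-1}(\go\setminus N),
\]
equivalently $z\notin s(B)$.

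This fibrewise assertion is the restriction of the (false) global claim \eqref{eq:1} to elements with source $z$, with one factor $\overline{W_1}$ and one factor $W_0$ dropped, and it should be precisely the portion of the reasoning in \cite{MW90} that is correct — the error there being the passage from $G_z$ to all of $G$. So for this last step the plan is to extract it from the intermediate conclusions of \cite{MW90} preceding the faulty passage, in particular from the choice of $W_0,W_1$ via \cite[Lemma~2.7]{MW90} (which yields $\overline{W_1}z\setminus W_0z\subseteq r^{-1}(\go\setminus N)$), or else to re-derive it there directly. I expect this to be the main obstacle: it requires reconciling the notation of \cite{MW90} and \cite{MW92} with the present one — a task the authors repeatedly flag as error-prone — and checking that the bookkeeping with $\overline{W_1}^{7}$ and $\overline{V_1}$ really does go through on the fibre over $z$, so that the source of $\gamma$ is the only quantity that has to be localised. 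Once that is secured, the separation argument above, which needs no countability hypothesis, completes the proof.
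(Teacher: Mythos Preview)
Your reduction is correct and is the paper's own argument recast as a direct proof rather than a proof by contradiction: the paper assumes no $A$ works, picks $\gamma_n\in G_{A_n}$ violating the inclusion for a decreasing compact neighbourhood basis $\{A_n\}$ of $z$, uses compactness of $\overline{W_1}^{7}\overline{V_1}$ to pass to a limit $\gamma$ with $s(\gamma)=z$, and obtains a contradiction from exactly your fibrewise assertion $z\notin s(B)$. Your formulation via $s(B)$ closed is marginally cleaner and needs no countability.

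Your worry about the fibrewise step is overblown, and in particular no bookkeeping with $\overline{W_1}^{7}\overline{V_1}$ is required there. In the \cite{MW90} set-up the set $F_z$ is constructed so that $\gamma\in G_z$ with $r(\gamma)\in N$ forces $\gamma\in F_z$, and one has $F_z\subseteq W_0z$. Since $z\in V_0$ gives $W_0z\subseteq W_0V_0$, any $\gamma\in G_z\setminus W_0V_0$ lies in $G_z\setminus W_0z\subseteq G_z\setminus F_z$, whence $r(\gamma)\notin N$; this is the one-line argument the paper gives. The containment $\gamma\in\overline{W_1}^{7}\overline{V_1}$ is used only to make your set $B$ compact (equivalently, to extract the paper's convergent subsequence), not for the verification on the fibre over $z$.
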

\begin{proof}
  If no such $A$ exists, we can let $\set{A_{n}}$ be a neighbourhood
  basis of $z$ with each $A_{n}$ compact and $A_{n+1}\subseteq A_{n}$.
  Then, by assumption, for each $n$
  we can find $\gamma_{n}\in G_{A_{n}}$ belonging
  to the closed set $r^{-1}(N)\cap
  (\overline{W_{1}}^{7}\overline{V_{1}} \setminus W_{0}V_{0})$.  Since
  $\overline{W_{1}}^{7}\overline{V_{1}}$ is compact, we can pass to a
  subsequence, relabel, and assume that $\gamma_{n}\to \gamma$.
  Notice that we must have $\gamma\in r^{-1}(N)\cap
  (\overline{W_{1}}^{7} \overline{V_{1}}\setminus
  W_{0}V_{0})$.
Since $s(\gamma_{n})\in A_{n}$ and $s(\gamma_{n})\to s(\gamma)$, we
must have $s(\gamma)=z$.  Since $\gamma\notin W_{0}V_{0}$, we have
$\gamma\in G_{z}\setminus W_{0}z$.  Since $F_{z}\subseteq W_{0}z$, our
construction of $F_{z}$ forces $r(\gamma)\notin N$.  But this is a
contradiction.  This completes the proof of the Lemma.
\end{proof}

Now, if $\gamma\in G_{A}$, then
\begin{equation}\label{eq:3}
  g^{(1)}(\gamma):=
  \begin{cases}
    g\bigl(r(\gamma)\bigr) &\text{if $\gamma\in \overline{W_{1}}^{7}
      \overline{V_{1}}\;\overline{W_{1}}^{7}$ and } \\
0&\text{if $\gamma\notin W_{0}V_{0}W_{0}$}
  \end{cases}
\end{equation}
is a well defined function on $G_{A}$.  Consequently \eqref{eq:3} defines
an element of $C_{c}(G_{A})$.  We can use the
Tietze-Extension Theorem to extend $g^{(1)}$ to an element of
$C_{c}(G)$ provided we keep in mind that \eqref{eq:3} holds only for
$\gamma\in G_{A}$.

Next, we must modify \cite[Lemma~2.8]{MW90} to hold only near $z$; specifically, we
have the following.

\begin{lemma}\label{danalemma2}
  With the choices above,
  \begin{equation*}
    g\bigl(r(\gamma)\bigr)g\bigl(r(\alpha)\bigr) b(\gamma\alpha^{-1})
    g^{(1)}(\alpha) =g^{(1)}(\gamma)g\bigl(r(\alpha)\bigr) g^{(1)}(\alpha)
  \end{equation*}
\emph{provided $\gamma,\alpha\in G_{A}$}.
\end{lemma}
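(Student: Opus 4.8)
The plan is to follow the (short) case analysis in the proof of \cite[Lemma~2.8]{MW90} with only one change: the global set‑identity \eqref{eq:1} is false, but its restriction to $G_A$ is available from \lemref{danalemma1}, and this restriction is exactly what is needed for the two clauses of \eqref{eq:3} to agree on their overlap inside $G_A$, i.e.\ for $g^{(1)}$ to be a well‑defined function on $G_A$. Throughout, $b(\gamma\alpha^{-1})$ is only meaningful when $\gamma\alpha^{-1}$ is composable, so we tacitly assume $s(\gamma)=s(\alpha)$, whence $r(\gamma\alpha^{-1})=r(\gamma)$; also, as in \cite{MW90}, $V_0$ is taken so small that $V_0\subseteq\overline{W_1}$, which lets a factor $W_0V_0W_0$ be absorbed into a bounded power of $\overline{W_1}$. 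The properties of $b$ we use are: $b\equiv 1$ on $(W_0V_0W_0)(W_0V_0W_0)^{-1}=W_0V_0W_0^{2}V_0W_0$ (here we use that $W_0$ and $V_0$ are symmetric), and $\supp b$ is small enough that $(\supp b)\,W_0V_0W_0\subseteq\overline{W_1}^{7}\,\overline{V_1}\,\overline{W_1}^{7}$.

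Next I would reduce the identity. Both sides carry the factor $g(r(\alpha))\,g^{(1)}(\alpha)$; if this factor vanishes both sides are $0$, so assume $g(r(\alpha))\neq 0$ and $g^{(1)}(\alpha)\neq 0$. Because $\alpha\in G_A$, the second clause of \eqref{eq:3} forces $\alpha\in W_0V_0W_0$, and, cancelling the common factor, it suffices to show $g(r(\gamma))\,b(\gamma\alpha^{-1})=g^{(1)}(\gamma)$. Split on whether $\gamma\in W_0V_0W_0$. If $\gamma\in W_0V_0W_0$, then $\gamma\alpha^{-1}\in(W_0V_0W_0)(W_0V_0W_0)^{-1}=W_0V_0W_0^{2}V_0W_0$, so $b(\gamma\alpha^{-1})=1$; moreover $\gamma\in W_0V_0W_0\subseteq\overline{W_1}^{7}\,\overline{V_1}\,\overline{W_1}^{7}$ and $\gamma\in G_A$, so the first clause of \eqref{eq:3} gives $g^{(1)}(\gamma)=g(r(\gamma))$, and the required identity $g(r(\gamma))\cdot 1=g(r(\gamma))$ holds.

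If instead $\gamma\notin W_0V_0W_0$, then (again using $\gamma\in G_A$) the second clause of \eqref{eq:3} gives $g^{(1)}(\gamma)=0$, so I must show $g(r(\gamma))\,b(\gamma\alpha^{-1})=0$. If $b(\gamma\alpha^{-1})=0$ there is nothing to prove, so suppose $b(\gamma\alpha^{-1})\neq 0$; then $\gamma\alpha^{-1}\in\supp b$, hence $\gamma=(\gamma\alpha^{-1})\alpha\in(\supp b)\,W_0V_0W_0\subseteq\overline{W_1}^{7}\,\overline{V_1}\,\overline{W_1}^{7}$. Thus $\gamma\in(\overline{W_1}^{7}\,\overline{V_1}\,\overline{W_1}^{7}\setminus W_0V_0W_0)\cap G_A$, and \lemref{danalemma1} (the $G_A$‑valid statement \eqref{eq:2}, replacing the defective \eqref{eq:1}) forces $g(r(\gamma))=0$ — which is precisely what makes \eqref{eq:3} a consistent definition of $g^{(1)}$ on $G_A$ — so $g(r(\gamma))\,b(\gamma\alpha^{-1})=0$ in this case as well. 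I expect the only delicate point to be the bookkeeping with powers of $\overline{W_1}$ and $\overline{V_1}$: one must know $\supp b$ was taken small enough that $(\supp b)\,W_0V_0W_0$ stays inside $\overline{W_1}^{7}\,\overline{V_1}\,\overline{W_1}^{7}$, and that $V_0$ was taken small enough to be absorbed into $\overline{W_1}$; the conceptual content is merely that the global identity \eqref{eq:1} is invoked only through its valid restriction to $G_A$ furnished by \lemref{danalemma1}.
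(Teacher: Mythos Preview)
Your proposal is correct and is exactly the approach the paper intends: the paper's proof of this lemma consists solely of the sentence ``with the given restriction on $\gamma$ and $\alpha$, the proof $\ldots$ goes through as written in \cite{MW90}'', and you have faithfully carried out that case analysis, invoking the $G_A$-restricted containment (equivalently, the well-definedness of \eqref{eq:3} on $G_A$) precisely where the original \cite{MW90} argument would have appealed to the defective global statement \eqref{eq:1}. The bookkeeping you flag --- $V_0\subseteq\overline{W_1}$ so that $W_0V_0W_0\subseteq\overline{W_1}^{3}$, and $\supp b\subseteq\overline{W_1}^{4}\overline{V_1}\,\overline{W_1}^{4}$ so that $(\supp b)W_0V_0W_0\subseteq\overline{W_1}^{7}\overline{V_1}\,\overline{W_1}^{7}$ --- is exactly the arithmetic built into the choices in the main text, so there is no gap.
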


Then with the given restriction on $\gamma$ and $\alpha$, the proof of
Lemma~\ref{danalemma2} goes through as written in \cite{MW90}.
Now it is straightforward to check that the rest of the proof of
\cite[Theorem~2.3]{MW90} goes through with the observation that (1)~we only need
consider the representations $L^{u}$ with $u$ close to $z$, and that
(2)~$L^{u}$ acts on $L^{2}(G_{u},\lambda_{u})$.  This allows us to
apply Lemma~\ref{danalemma2}  at the appropriate time.

\begin{remark}
The existence of the neighbourhoods $V_0$ and $V_1$ such that
\eqref{eq:1} holds is used in  the proof of \cite[Theorem~2.3]{MW92}
(see page~237 of \cite{MW92}) and in the proof of
\cite[Proposition~4.1]{CaH}; both results are saved by
Lemma~\ref{danalemma1} since we can restrict to elements with source
sufficiently close to $z$.
\end{remark}



\begin{thebibliography}{99}

\bibitem{AaH} R.J. Archbold and A. an Huef, \emph{Strength of convergence in the orbit space of a transformation group}, J. Funct. Anal. \textbf{235} (2006), 90--121.

\bibitem{AaH2}
R.J. Archbold and A. an Huef, \emph{Strength of convergence and
multiplicities in the spectrum of a $\cs$-dynamical system}, Proc.
London Math. Soc. \textbf{96} (2008), 545--581.

\bibitem{AS} R.J. Archbold  and D.W.B. Somerset,
\emph{Transition probabilities and trace functions for
    {$C^*$}-algebras}, Math. Scand. \textbf{73} (1993),
    81--111.


\bibitem{C} L.O. Clark, \emph{Classifying the type of principal groupoid $C^*$-algebras},  J. Operator Theory  \textbf{57}  (2007),  251--266.

\bibitem{C2}
L.O. Clark, \emph{CCR and GCR Groupoid $\cs$-algebras},  Indiana
Univ. Math. J.  \textbf{56}  (2007),  2087--2110.


\bibitem{CaH}
L.O. Clark and A. an Huef, \emph{Principal groupoid $\cs$-algebras
with bounded trace},  Proc. Amer. Math. Soc. \textbf{136}  (2008),
623--634.

\bibitem{dix}
J. Dixmier, $C^*$-algebras, North-Holland, New York, 1977.

\bibitem{echter} S. Echterhoff, \emph{On transformation group $C^*$-algebras with continuous trace},   Trans. Amer. Math. Soc.  \textbf{343}  (1994), 117--133.


\bibitem{ephrem} M. Ephrem, \emph{Characterizing liminal and type I graph $C^*$-algebras},
J. Operator Theory \textbf{52} (2004), 303--323.


\bibitem{glimm} J. Glimm, \emph{Locally
compact transformation groups}, Trans. Amer. Math. Soc. \textbf{101}
(1961), 124--128.

\bibitem{gootman}
E.~C. Gootman, \emph{The type of some {$C^*$}- and {${W^{ *}}$}-algebras
  associated with transformation groups}, Pacific J. Math. \textbf{48} (1973),
  93--106.

\bibitem{green77}
P.~Green, \emph{{$C^*$}-algebras of transformation groups with smooth orbit
  space}, Pacific J. Math. \textbf{72} (1977), 71--97.



\bibitem{Robbie} R. Hazlewood,
\emph{Continuous trace, {F}ell, bounded trace, liminal and postliminal
graph algebras},  in preparation.


\bibitem{HaH} R. Hazlewood and A. an Huef, \emph{The strength of
convergence in the orbit space of a groupoid}, preprint,
arXiv:1006.3115.

\bibitem{aH1} A.~an Huef, \emph{The transformation groups whose
$C^*$-algebras are Fell
  algebras}, Bull. London Math. Soc. \textbf{33} (2001), 73--76.

\bibitem{aH2}
A.~an Huef, \emph{Integrable actions and the transformation groups whose
$C^*$-algebras have bounded trace}, Indiana Univ. Math. J. \textbf{51} (2002), 1197--1233.


\bibitem{K}
A. Kumjian, \emph{On $\cs$-diagonals}, Canad. J. Math. \textbf{38}
(1986), 969--1008.


\bibitem{KPRR}
A. Kumjian, D. Pask, I. Raeburn and J. Renault, \emph{Graphs,
groupoids and Cuntz-Krieger algebras}, J. Funct. Anal. \textbf{144}
(1997), 505--541.


\bibitem{MW90}
P.S. Muhly and D.P. Williams, \emph{Continuous trace groupoid
$C^*$-algebras}, Math. Scand. \textbf{66} (1990), 231--241.

\bibitem{MW92}
P.S. Muhly and D.P. Williams, \emph{Continuous trace groupoid
$C^*$-algebras II}, Math. Scand. \textbf{70} (1992), 127--145.


\bibitem{MRW96} P.S. Muhly, J. Renault and D.P. Williams,
\emph{Continuous trace
  groupoid {\cs}-algebras, {III}}, Trans. Amer. Math. Soc. \textbf{348} (1996),
3621--3641.

\bibitem{ped} G.K. Pedersen, $C^\ast$-algebras and their automorphism groups, Academic Press, London, 1979.

\bibitem{tfb}
I. Raeburn and D.P. Williams, Morita equivalence and
continuous-trace $C^*$-Algebras, Math. Surveys and Monographs,
vol.~60, Amer. Math. Soc., Providence, 1998.

\bibitem{ramsay}
A. Ramsay, \emph{The Mackey-Glimm dichotomy for foliations and other
Polish groupoids}, J. Funct. Anal. \textbf{94} (1990), 358--374.

\bibitem{renault}
J. Renault, \emph{A groupoid approach to $C^*$-algebras}, Lecture
Notes in Mathematics, No. \textbf{793}, Springer-Verlag, New York,
1980.

\bibitem{renault-jot} J. Renault,  \emph{Repr\'esentations des produits crois\'es d'alg\`ebres de groupoides},  J. Operator Theory  \textbf{18}  (1987),   67--97.

\bibitem{renault-ideal} J. Renault, \emph{The ideal structure of groupoid crossed product $C^*$-algebras}, J. Operator Theory  \textbf{25} (1991), 3--36.


\bibitem{dana-ccr}  D.P. Williams, \emph{The topology on the primitive ideal space of
 transformation group $C^*$-algebras and CCR transformation group
 $C^*$-algebras}, Trans. Amer. Math. Soc. \textbf{266} (1981), 335--359.

\bibitem{wiljfa81}  D.P. Williams, \emph{Transformation group {$C^*$}-algebras with continuous trace}, J. Funct. Anal. \textbf{41} (1981), 40--76.

\bibitem{tfb2}
D.P. Williams, Crossed products of $C^*$-algebras, Math. Surveys and
Monographs, vol. {\bf 134}, Amer. Math. Soc., Providence, 2007.

\end{thebibliography}
 \end{document}